  \theoremstyle{plain}
    \newtheorem{thm}{Theorem}[section]
    \newtheorem{prop}[thm]{Proposition}
    \newtheorem{corollary}[thm]{Corollary}
    \newtheorem{subsec}[thm]{}
\theoremstyle{definition}
    \newtheorem{defn}[thm]{Definition}
        \newtheorem{remark}[thm]{Remark}
    \newtheorem{exam}[thm]{Example}
\theoremstyle{remark}
\title{}
\author{}
\date{}
\newcommand*\xrightleftharpoons[2][]{%
  \mathrel{%
    \raise.22ex\hbox{%
      $\m@th\ext@arrow 0359\MT@rightharpoonup@fill{\phantom{#1}}{#2}$%
    }%
    \setbox0=\hbox{%
      $\m@th\ext@arrow 3095\MT@leftharpoondown@fill{#1}{\phantom{#2}}$%
    }%
    \kern-\wd0 \lower.22ex\box0 %
  }%
}
\def\MT@rightharpoonup@fill{%
  \arrowfill@\relbar\relbar\rightharpoonup
}
\def\MT@leftharpoondown@fill{%
  \arrowfill@\leftharpoondown\relbar\relbar
}
  \providecommand*{\arrowfill@}[4]{%
    $\m@th\thickmuskip0mu\medmuskip\thickmuskip\thinmuskip\thickmuskip
     \relax#4#1\mkern-7mu%
     \cleaders\hbox{$#4\mkern-2mu#2\mkern-2mu$}\hfill
     \mkern-7mu#3$%
  }
  \providecommand*{\ext@arrow}[7]{%
    \mathrel{\mathop{%
      \setbox\z@\hbox{#5\displaystyle}%
      \setbox\tw@\vbox{\m@th
        \hbox{$\scriptstyle\mkern#3mu{#6}\mkern#4mu$}%
        \hbox{$\scriptstyle\mkern#3mu{#7}\mkern#4mu$}%
        \copy\z@
      }%
      \hbox to\wd\tw@{\unhbox\z@}}%
    \limits
      \@ifnotempty{#7}{^{\if0#1\else\mkern#1mu\fi
                         #7\if0#2\else\mkern#2mu\fi}}%
      \@ifnotempty{#6}{_{\if0#1\else\mkern#1mu\fi
                         #6\if0#2\else\mkern#2mu\fi}}}%
  }
  \providecommand{\@ifnotempty}[1]{%
    \@ifempty{#1}{}%
  }
  \providecommand{\@ifempty}[1]{\@xifempty#1@@..\@nil}
    \long\def\@xifempty#1#2@#3#4#5\@nil{%
      \ifx#3#4\expandafter\@firstoftwo\else\expandafter\@secondoftwo\fi
    }
\begin{document}

\title[]{Deformation cohomology of Nijenhuis algebras and applications to extensions, inducibility of automorphisms and homotopy algebras}

\author{Apurba Das}
\address{Department of Mathematics,
Indian Institute of Technology, Kharagpur 721302, West Bengal, India}
\email{apurbadas348@gmail.com, apurbadas348@maths.iitkgp.ac.in}

%\author{Suman Majhi}
%\address{Department of Mathematics,
%Indian Institute of Technology, Kharagpur 721302, West Bengal, India}
%\email{majhisuman693@gmail.com}

%\author{Ramkrishna Mandal}
%\address{Department of Mathematics, Indian Institute of Technology, Kharagpur 721302, West Bengal, India}
%\email{ramkrishnamandal430@gmail.com}

\begin{abstract}
%In this paper, we first define the cohomology of a Nijenhuis operator and then the cohomology of a Nijenhuis algebra. 
Our primary aim in this paper is to introduce and study the cohomology of a Nijenhuis operator and of a Nijenhuis algebra.
Our cohomology of a Nijenhuis algebra controls the simultaneous deformations of the underlying associative structure and the Nijenhuis operator. We interpret the second cohomology group as the space of all isomorphism classes of abelian extensions. Then we study the inducibility of a pair of Nijenhuis algebra automorphisms in a given abelian extension and show that the corresponding obstruction can be seen as the image of a suitable Wells-type map. We also consider skeletal and strict $2$-term homotopy Nijenhuis algebras and characterize them by third cocycles of Nijenhuis algebras and crossed modules of Nijenhuis algebras, respectively. Finally, we introduce strict homotopy Nijenhuis operators on $A_\infty$-algebras and show that they induce $NS_\infty$-algebras. 
\end{abstract}

\maketitle

%\curraddr{}
%\email{}

%\subjclass[2010]{}
%\keywords{}

\medskip

\begin{center}
\noindent {2020 MSC classification:} 16E40, 16S80, 16D20, 16W20, 16W99.

\noindent  {Keywords:} Nijenhuis operators, Nijenhuis algebras, Cohomology, Deformations, Extensions, Homotopy Nijenhuis operators.
\end{center}

 %Quasi-twilled associative algebras, Deformation maps, Crossed homomorphisms, Rota-Baxter operators, Controlling algebras, Cohomology.
 
 %Rota-Baxter operators, twisted Rota-Baxter operators, Crossed homomorphism, Averaging operators, Reynolds operators, Cohomology, Deformation.

 %\medskip

%\noindent {\sf Date of resubmission:} July 26, 2021.

\thispagestyle{empty}

\tableofcontents

%\vspace{0.2cm}

\medskip

\section{Introduction}
Finding the cohomology of an algebraic structure and applications to deformations and extensions are traditional problems initiated by Hochschild \cite{hoch} and Gerstenhaber \cite{gers} for associative algebras. The same studies were subsequently generalized to the context of Lie algebras by Nijenhuis and Richardson \cite{nij-ric}. Since then, cohomologies of various other algebraic structures were extensively studied and their applications to deformations were obtained. On the other hand, algebras are often equipped with additional compatible structures such as homomorphisms, derivations, representations etc. In such cases, it is also meaningful to consider simultaneous deformations (i.e. deformations of algebras and additional structures). In \cite{gers-sch} Gerstenhaber and Schack first considered the cohomology and deformation theory of associative algebras endowed with homomorphisms. A further study on homomorphisms was also carried out using the minimal model of operads \cite{yau} and the method of derived brackets \cite{fre}. Similarly, the operadic treatment of algebras endowed with derivations is first considered by Loday in \cite{loday-der}.

%However, a modern approach for studying simultaneous deformations of algebras and homomorphisms was discovered by Fr\'{e}gier and Zambon 

\medskip

In recent times, Rota-Baxter operators and their various cousins (such as relative Rota-Baxter operators, Rota-Baxter operators of any scalar weight $\lambda \in {\bf k }$, twisted Rota-Baxter operators, modified Rota-Baxter operators, averaging operators etc.) attract much interest due to their importance in the study of the classical Yang-Baxter equation, classical $r$-matrices, splitting of operads, pre-Lie algebras, (tri)dendriform algebras, infinitesimal bialgebras and quasisymmetric functions (see \cite{guo-book}, \cite{ebrahimi-loday}, \cite{uchino}, \cite{lin} and the references therein). Rota-Baxter operators and all their cousins can be characterized by their graphs as subalgebras of some bigger algebras. In the realm of the study of algebras equipped with additional structures, recently the cohomology and deformation theory of Rota-Baxter algebras are extensively studied. The first step toward such a study was given in \cite{O-op}, \cite{das-rota} where the cohomology of a Rota-Baxter operator was defined. Subsequently, the authors in \cite{lazarev}, \cite{das-mishra-jmp} construct an $L_\infty$-algebra whose Maurer-Cartan elements correspond to Rota-Baxter algebra structures. Then twisting by the Maurer-Cartan element, one obtains the controlling $L_\infty$-algebra for the corresponding Rota-Baxter algebra. The differential map of this controlling algebra is precisely the coboundary operator of the cochain complex defining the cohomology of the Rota-Baxter algebra. The same technique can be adapted to obtain the cohomology of algebras endowed with any of the above cousins of the Rota-Baxter operator. However, an exciting operator that appears in the linear deformation theory of algebraic structures \cite{gers}, integrable systems and tensor hierarchies in mathematical physics \cite{koss}, quantum bi-Hamiltonian systems \cite{gra-bi} is the {\em Nijenhuis operator}. 
A Nijenhuis operator on an associative algebra gives rise to a new associative algebra structure on the underlying space, called the deformed associative algebra. From the algebraic perspective, Nijenhuis operators also got a lot of attention. They were studied in a wide class of algebraic structures  (see for instance \cite{azimi}, \cite{baishya-das2}, \cite{ebrahimi}, \cite{ebrahimi-leroux}, \cite{lei}, \cite{leroux}, \cite{liu}, \cite{liu-sheng}, \cite{ma}, \cite{saha}, \cite{peng}, \cite{wang}, \cite{yuan}).
 Unlike Rota-Baxter operators, it is important to remark that Nijenhuis operators cannot be characterized by their graphs as subalgebras of some bigger algebras. This makes the study of Nijenhuis operators a bit more intricate and therefore some results about Nijenhuis operators are not parallel to the study of Rota-Baxter operators. In \cite{leroux} Leroux introduced the notion of an {\em NS-algebra} as the algebraic structure induced by a Nijenhuis operator (see also \cite{lei}). NS-algebras are a generalization of dendriform algebras and are closely related to twisted Rota-Baxter operators \cite{uchino}. Recently, the present author has introduced the cohomology and deformation theory of an NS-algebra using the idea of nonsymmetric operads with multiplication \cite{das-ns}.

\medskip

Our primary aim in this paper is to introduce and extensively study the cohomology of a Nijenhuis operator (defined on an associative algebra) and also the cohomology of a Nijenhuis algebra. To begin with, we first recall the associative analogue of the Fr\"{o}licher-Nijenhuis bracket considered recently by Baishya and the present author \cite{baishya-das3}. Since a Nijenhuis operator $N$ is a Maurer-Cartan element for the Fr\"{o}licher-Nijenhuis bracket, the operator $N$ induces a cochain complex. We define the corresponding cohomology groups as the cohomology of the Nijenhuis operator $N$. It is important to note that, unlike the Rota-Baxter case, the cohomology of the Nijenhuis operator $N$ cannot be expressed as the Hochschild cohomology of the deformed associative algebra. We find a homomorphism from the cohomology of a Nijenhuis operator $N$ to the cohomology of the induced NS-algebra. Consequently, we also obtain a homomorphism from the cohomology of $N$ to the Hochschild cohomology of the deformed associative algebra. Next, we put our interest to define the cohomology of a Nijenhuis algebra. To do so, given a Nijenhuis algebra, we first obtain a homomorphism from the Hochschild cochain complex of the underlying associative algebra to the cochain complex induced by the Nijenhuis operator. The mapping cone corresponding to this homomorphism is defined to be the cochain complex associated with the Nijenhuis algebra. However, we are interested in a reduced version of this cochain complex to study deformations and abelian extensions of Nijenhuis algebras. We show that the cohomology groups thus obtained govern the deformations of the Nijenhuis algebra, i.e. simultaneous deformations of the underlying algebra and the Nijenhuis operator (cf. Theorem \ref{theorem-deformation1}, Theorem \ref{theorem-deformation2}). In the course of this paper, we explicitly define the cohomology of a Nijenhuis algebra with coefficients in a more general Nijenhuis bimodule. We observe that the second cohomology group with coefficients in a Nijenhuis bimodule parametrizes the isomorphism classes of all abelian extensions (cf. Theorem \ref{theorem-abelian}).

%Then as a byproduct with the cohomology of the underlying algebra, one establishes the cohomology of a Rota-Baxter algebra \cite{wang-zhou}. 

%A Nijenhuis operator on an associative algebra induces an NS-algebra structure \cite{leroux} in the same way a Rota-Baxter operator induces a dendriform algebra structure. 

\medskip

Another interesting study related to the cohomology of an algebraic structure is the inducibility problem for a pair of automorphisms. Given an extension of abstract groups, C. Wells \cite{wells} first formulated an exact sequence connecting various automorphism groups and obtained a criterion for the inducibility of a pair of automorphisms. His study was further explored in various specific cases and generalized to other algebraic structures \cite{jin}, \cite{bar-singh}. With Hazra and Mishra, the present author considered the extensions of Rota-Baxter Lie algebras and studied the inducibility problem \cite{das-hazra-mishra}. In particular, they defined the Wells map in the context of Rota-Baxter Lie algebras. They showed that a pair of Rota-Baxter Lie algebra automorphisms is inducible if and only if its image under the Wells map vanishes identically. Therefore, it is natural to ask whether the inducibility problem can be answered in the context of Nijenhuis algebras. Given an abelian extension of Nijenhuis algebras, we first define a suitable Wells-type map $\mathcal{W}$ and show that a pair of Nijenhuis algebra automorphisms is inducible if and only if the pair is `compatible' and its image under the map $\mathcal{W}$ vanishes identically (cf. Theorem \ref{theorem-wells}).

\medskip

On the other side, homotopy invariant algebraic structures play a prominent role in modern mathematical physics. Among others, $A_\infty$-algebras and $L_\infty$-algebras are the key ones (see, for example \cite{keller}, \cite{mapping}). Our final aim in this paper is to define and study homotopy Nijenhuis operators on $A_\infty$-algebras. At first, we
consider homotopy Nijenhuis operators on $2$-term $A_\infty$-algebras. A pair consisting of a $2$-term $A_\infty$-algebra and a homotopy Nijenhuis operator is called a {\em $2$-term homotopy Nijenhuis algebra}. Particular attention are given to skeletal and strict $2$-term homotopy Nijenhuis algebras. We show that skeletal $2$-term homotopy Nijenhuis algebras can be characterized by third cocycles of Nijenhuis algebras. Then we introduce crossed modules of Nijenhuis algebras and show that they are equivalent to strict $2$-term homotopy Nijenhuis algebras. Next, we define the notion of a {\em strict homotopy Nijenhuis operator} on an arbitrary $A_\infty$-algebra. However, a generic homotopy Nijenhuis operator on an $A_\infty$-algebra is yet to be found. At last, we define the concept of an {\em $NS_\infty$-algebra} (also called a {\em strongly homotopy NS-algebra}) and show that a strict homotopy Nijenhuis operator induces an $NS_\infty$-algebra structure. This generalizes the result of Leroux \cite{leroux} that a Nijenhuis operator on an associative algebra gives rise to an NS-algebra.

\medskip

The present paper is organized as follows. In Section \ref{sec2}, we define the cohomology of a Nijenhuis operator $N$ and obtain a homomorphism to the cohomology of the induced NS-algebra. Then using a byproduct of the Hochschild cochain complex of the underlying associative algebra and the cochain complex induced by the Nijenhuis operator, we define the cohomology of a Nijenhuis algebra in Section \ref{sec3}. Applications to deformations and abelian extensions of Nijenhuis algebras are provided in Section \ref{sec4}. Given an abelian extension, in Section \ref{sec5}, we study the inducibility of a pair of Nijenhuis algebra automorphisms and find a necessary and sufficient condition in terms of a suitable Wells-type map. Finally, we characterize skeletal and strict $2$-term homotopy Nijenhuis algebras in Section \ref{sec6}. We also introduce and study strict homotopy Nijenhuis operators and $NS_\infty$-algebras.

%\subsection{Nijenhuis operators and Nijenhuis algebras}

%\subsection{Cohomology and simultaneous deformations}

%\subsection{Inducibility of automorphisms and the Wells map}

%\subsection{Homotopy Nijenhuis algebras and $NS_\infty$-algebras}

\medskip

All algebras, vector spaces, (multi)linear maps and tensor products are over a field ${\bf k}$ of characteristic $0$ unless specified otherwise.

\section{Nijenhuis operators and the Fr\"{o}licher-Nijenhuis bracket}\label{sec2} 
In this section, we first recall some examples and basic properties of Nijenhuis operators. In particular, given an associative algebra $A$, we describe the Fr\"{o}licher-Nijenhuis bracket (that yields a graded Lie bracket) whose Maurer-Cartan elements correspond to Nijenhuis operators on $A$. Then we define and study the cohomology of a Nijenhuis operator. In the end, we also obtain a homomorphism from the cohomology of a Nijenhuis operator $N$ to the cohomology of the induced NS-algebra. As a consequence, we connect the cohomology of the operator $N$ to the Hochschild cohomology of the deformed associative algebra.

\begin{defn}
   Let $A = (A, ~ \! \cdot ~ \! )$ be an associative algebra. A linear map $N: A \rightarrow A$ is said to be a {\bf Nijenhuis operator} on $A$ if it satisfies
   \begin{align*}
       N (a) \cdot N (b) = N ( N(a) \cdot b + a \cdot N(b) - N (a \cdot b)), \text{ for all } a, b \in A. 
   \end{align*}
\end{defn}

\begin{exam}Let $ (A, ~ \! \cdot ~ \! )$ be any associative algebra.
\begin{itemize}
    \item[(i)] Then the identity map $\mathrm{Id}_A : A \rightarrow A$ is a Nijenhuis operator on $A$.
    \item[(ii)] For any element $x \in A$, the left multiplication map $l_x : A \rightarrow A,~ \! a \mapsto x \cdot a$ and the right multiplication map $r_x : A \rightarrow A, ~ \! a \mapsto a \cdot x$ are both Nijenhuis operators on $A$. In particular, for any non-negative integer $k$, the map $N : {\bf k} [x] \rightarrow {\bf k} [x]$ given by $N (x^n) = x^{n+k}$ is a Nijenhuis operator on $A = {\bf k}[x].$ 
    \item[(iii)] If $N : A \rightarrow A$ is a Nijenhuis operator on $A$ then for any scalar $\lambda \in {\bf k}$, the map $\lambda N$ is also a Nijenhuis operator on $A$.
\end{itemize}
\end{exam} 

\begin{exam}\label{exam-rrb}
Let  $A = (A, ~ \! \cdot ~ \! )$ be an associative algebra. An {\em $A$-bimodule} is a vector space $M$ endowed with two bilinear maps $\triangleright : A \times M \rightarrow M, ~ \! (a,u) \mapsto a \triangleright u$ and $\triangleleft : M \times A \rightarrow M, ~ \! (u, a ) \mapsto u \triangleleft a$ that satisfy the following associative-like identities:
\begin{align*}
    (a \cdot b) \triangleright u = a \triangleright (b \triangleright u), \quad (a \triangleright u) \triangleleft b = a \triangleright (u \triangleleft b ), \quad (u \triangleleft a ) \triangleleft b = u \triangleleft (a \cdot b), \text{ for } a, b \in A \text{ and }  u \in M.
\end{align*}
Recall that, in this case, the direct sum $A \oplus M$ carries an associative algebra structure (called the {\em semidirect product}) with multiplication
\begin{align}\label{semid}
    (a, u) \cdot_\ltimes (b, v) := (a \cdot b ~ \! , ~ \!  a \triangleright v + u \triangleleft b), \text{ for } (a, u), (b, v) \in A \oplus M.
\end{align}
%Here we observe that derivations and (relative) Rota-Baxter operators can be seen as Nijenhuis operators on the semidirect product algebra.
\begin{itemize}
\item[(i)] For any linear map $d : A \rightarrow M$, its lift $\widetilde{d} : A \oplus M \rightarrow A \oplus M$ defined by $\widetilde{d} (a, u) = (0, d(a))$ is a Nijenhuis operator on the semidirect product algebra $(A \oplus M, ~ \! \cdot_\ltimes ~ \! )$.

%A {\em derivation} on $A$ with values in the $A$-bimodule $M$ is a linear map $d : A \rightarrow M$ that satisfies
%\begin{align*}
%    d (a \cdot b) = a \triangleright d (b) + d(a) \triangleleft b, \text{ for } a, b \in A.
%\end{align*}
%Note that a linear map $d : A \rightarrow M$ is a derivation if and only if its lift $\widetilde{d} : A \oplus M \rightarrow A \oplus M$ defined by $\widetilde{d} (a, u) = (0, d(a))$ is a Nijenhuis operator on the semidirect product algebra $(A \oplus M, ~ \! \cdot_\ltimes ~ \! )$.
\item[(ii)] A linear map $R : M \rightarrow A$ is called a {\em relative Rota-Baxter operator} (also called an {\em $\mathcal{O}$-operator}) \cite{uchino,das-rota} on $A$ with respect to the $A$-bimodule $M$ if it satisfies
\begin{align*}
    R(u) \cdot R(v) = R ( R (u) \triangleright v + u \triangleleft R (v) ), \text{ for } u, v \in M.
\end{align*}
It turns out that a linear map $R: M \rightarrow A$ is a relative Rota-Baxter operator if and only if its lift $\widetilde{R} : A \oplus M \rightarrow A \oplus M$ defined by $\widetilde{R} (a, u) = (R(u), 0)$ is a Nijenhuis operator on the semidirect product algebra $(A \oplus M, ~ \! \cdot_\ltimes ~ \! )$.
\end{itemize}
\end{exam}

\begin{exam}
 Let $A$ be an associative algebra and $(M, \triangleright, \triangleleft)$ be an $A$-bimodule. Suppose $R_1, R_2 : M \rightarrow A$ are two relative Rota-Baxter operators that are compatible in the sense that their sum $R_1 + R_2$ is also a relative Rota-Baxter operator. Additionally, if $R_2$ is invertible then $N= R_1 R_2^{-1}$ is a Nijenhuis operator on $A$ \cite{liu}.
\end{exam}

\begin{exam}
    A {\em twilled associative algebra} is an associative algebra $(A, ~ \! \cdot ~ \!)$ whose underlying vector space $A$ has a direct sum decomposition $A = A_1 \oplus A_2$ into two subalgebras. Let $p_1: A \rightarrow A$ (resp. $p_2 : A \rightarrow A$) be the projection map onto $A_1$ (resp. $A_2$). Then $p_1$ and $p_2$ are both Nijenhuis operators on $A$. In fact, any linear combination of $p_1$ and $p_2$ is also a Nijenhuis operator on $A.$
\end{exam}

\begin{exam}
    Let $A$ be an associative algebra. A {\em free Nijenhuis algebra} over $A$ is an associative algebra $F_\mathrm{N} (A)$ with a Nijenhuis operator $\mathcal{N} : F_\mathrm{N} (A) \rightarrow F_\mathrm{N} (A)$ and an algebra homomorphism $i: A \rightarrow F_\mathrm{N} (A)$ satisfying some universal property \cite{lei}. In the above reference, the authors have constructed the free Nijenhuis algebra over any associative algebra $A$ and thus obtained a Nijenhuis operator $\mathcal{N}$.
\end{exam}

\begin{exam}
    A {\em dendriform-Nijenhuis bialgebra} \cite{leroux} (see also \cite{peng}) is a triple $(A, ~ \! \cdot ~ \!, \Delta)$ consisting of an associative algebra $(A, ~ \! \cdot ~ \! )$ and a coassociative coalgebra $(A, \Delta)$ both defined on a same vector space satisfying additionally the following compatibility condition:
    \begin{align*}
        \Delta (a \cdot b) = a_{(1)} \otimes a_{(2)} \cdot b ~ \! +~ \! a \cdot b_{(1)} \otimes b_{(2)}  \! ~-~ \! a_{(1)} \cdot a_{(2)} \otimes b, \text{ for all } a, b \in A.
    \end{align*}
    Here we use the Sweedler's notation $\Delta (a) = a_{(1)} \otimes a_{(2)}$ and $\Delta (b) = b_{(1)} \otimes b_{(2)}$ to describe the coproducts. Let $(A, ~ \! \cdot ~ \!, \Delta)$ be a dendriform-Nijenhuis bialgebra. Consider the space $\mathrm{End} (A)$ which has a natural associative algebra structure under the composition $\circ$. With this structure, the map 
    \begin{align*}
    N : \mathrm{End} (A) \rightarrow \mathrm{End} (A) ~~ \text{ defined by } ~~ N (f) (a) := a_{(1)} \cdot f (a_{(2)}), \text{ for } f \in \mathrm{End}(A) \text{ and } a \in A
    \end{align*}
    is a Nijenhuis operator on $(\mathrm{End} (A), \circ)$.
\end{exam}

It is important to remark that Nijenhuis operators on an associative algebra $(A, ~ \! \cdot ~ \!)$ are closely related to linear deformations of the algebra structure \cite{gers} (see also \cite{liu}). Indeed, if $N : A \rightarrow A$ is a Nijenhuis operator then
\begin{align*}
    a \cdot_t b := a \cdot b + t ~ \! ( N(a) \cdot b + a \cdot N (b) - N (a \cdot b)), \text{ for } a, b \in A 
\end{align*}
is a linear deformation of the given associative algebra $(A, ~ \! \cdot ~ \!)$. However, this linear deformation is `trivial' in the sense that it is equivalent to the undeformed one $a \cdot_t' b = a \cdot b$.

Let $(A, \! ~ \cdot ~ \!)$ be an associative algebra and $N: A \rightarrow A$ be a Nijenhuis operator on it. Then the underlying vector space $A$ inherits a new associative algebra structure with the (deformed) multiplication 
\begin{align}\label{deformed-alg}
    a \cdot_N b := N (a) \cdot b + a \cdot N (b) -N (a \cdot b), \text{ for } a, b \in A.
\end{align}
We often denote this deformed associative algebra $(A, \cdot_N )$ simply by $A_N$ when no confusion arises. More generally, one has the following result \cite{gra-bi}.

\begin{prop}
    Let $(A, ~ \! \cdot ~ \! )$ be an associative algebra and $N: A \rightarrow A$ be a Nijenhuis operator on it.
    \begin{itemize}
        \item[(i)] Then for each $k \geq 0$, the map $N^k : A \rightarrow A$ is also a Nijenhuis operator on the associative algebra $(A, ~ \! \cdot ~ \!)$. 
        \item[(ii)] For any $k, l \geq 0$, the map $N^l : A \rightarrow A$ is a Nijenhuis operator on the deformed associative algebra $(A, ~ \! \cdot_{N^k} ~ \!)$. Moreover, the deformed algebras $(A, ( ~ \! \cdot_{N^k})_{N^l})$ and $(A, ~ \! \cdot_{N^{k+l}})$ are the same.
    \end{itemize}
\end{prop}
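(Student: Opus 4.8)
The plan is to reduce everything to a single master identity governing how the mixed products $N^i(a) \cdot N^j(b)$ are reshuffled by $N$, and then to obtain parts (i), (ii) and the ``moreover'' clause as purely formal consequences. Concretely, I would first isolate and prove
\begin{equation}
N^i(a) \cdot N^j(b) = N^j \big( N^i(a) \cdot b \big) + N^i \big( a \cdot N^j(b) \big) - N^{i+j}(a \cdot b), \quad \text{for all } i, j \geq 0 \text{ and } a, b \in A. \tag{$\ast$}
\end{equation}
The instance $i = j = 1$ is precisely the defining Nijenhuis identity, while the instances $i = 0$ or $j = 0$ are trivial (they reduce to $a \cdot N^j(b) = a \cdot N^j(b)$ and $N^i(a) \cdot b = N^i(a) \cdot b$); the entire content of the proposition is concentrated in the passage to higher powers recorded by $(\ast)$.

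To establish $(\ast)$ I would run a double induction. First I fix $j = 1$ and induct on $i$: writing $N^{i+1}(a) \cdot N(b) = N \big( N^i(a) \big) \cdot N(b)$ and applying the Nijenhuis identity with $N^i(a)$ in its first slot rewrites the left-hand side as $N \big( N^i(a) \cdot_N b \big)$; expanding $\cdot_N$ through \eqref{deformed-alg}, then applying $N$ to the induction hypothesis and substituting, makes every spurious term cancel and leaves exactly the $j = 1$ case for $i + 1$. With the whole family $\{\, j = 1,\ i \geq 0 \,\}$ available, I then fix $i$ and induct on $j$: applying the just-proved $j = 1$ identity to $N^i(a) \cdot N \big( N^j(b) \big)$ and feeding in the induction hypothesis for $j$ (again after a single application of $N$) delivers the case $j + 1$. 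Each step is linear and uses nothing beyond the Nijenhuis identity and the ${\bf k}$-linearity of $N$; in particular associativity of $\cdot$ is never invoked here.

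Granting $(\ast)$, I would record the composition law $(\cdot_{N^m})_{N^n} = \cdot_{N^{m+n}}$ for all $m, n \geq 0$, where $\cdot_{N^0}$ is understood to be the original product $\cdot$. This is a bare expansion: writing $a \,(\cdot_{N^m})_{N^n}\, b = N^n(a) \cdot_{N^m} b + a \cdot_{N^m} N^n(b) - N^n(a \cdot_{N^m} b)$ and expanding every inner $\cdot_{N^m}$ by \eqref{deformed-alg}, the terms $N^{m+n}(a) \cdot b$ and $a \cdot N^{m+n}(b)$ cancel against their counterparts in $a \cdot_{N^{m+n}} b$, and the remaining discrepancy is assembled from the two mixed products $N^n(a) \cdot N^m(b)$, $N^m(a) \cdot N^n(b)$ together with a handful of terms each of which is a single power of $N$ applied to a product. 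Rewriting the two mixed products via $(\ast)$ with $(i,j) = (n,m)$ and $(i,j) = (m,n)$ turns them into exactly those remaining terms, and everything cancels. The case $m = 0$ is vacuous, while general $m$ is precisely the ``moreover'' assertion of part (ii).

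Finally, to prove that $N^n$ is a Nijenhuis operator on $(A, \cdot_{N^m})$, the composition law reduces the defining identity to the single equality $N^n(a) \cdot_{N^m} N^n(b) = N^n \big( a \cdot_{N^{m+n}} b \big)$. Expanding the left-hand side by \eqref{deformed-alg} produces the three products $N^{m+n}(a) \cdot N^n(b)$, $N^n(a) \cdot N^{m+n}(b)$ and $N^m \big( N^n(a) \cdot N^n(b) \big)$; applying $(\ast)$ to each of them (to the last after pulling out the outer $N^m$ and using $(\ast)$ inside) yields a telescoping combination in which every mixed contribution cancels, leaving exactly $N^n \big( N^{m+n}(a) \cdot b + a \cdot N^{m+n}(b) - N^{m+n}(a \cdot b) \big) = N^n(a \cdot_{N^{m+n}} b)$. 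Part (i), that each $N^k$ is a Nijenhuis operator on $(A, \cdot)$, is the case $m = 0$ of this computation (with $n = k$); the deformation construction recalled just before the proposition then makes $(A, \cdot_{N^k})$ genuinely associative, so that the assertion of part (ii) about $N^l$ on $(A, \cdot_{N^k})$ is meaningful and is the case $m = k$, $n = l$. The one genuinely delicate point in all of this is setting up the double induction for $(\ast)$; I expect the correct use of the Nijenhuis identity \emph{inside} an application of $N$ in the inductive step to be the place where index bookkeeping most easily goes astray, whereas parts (i) and (ii) are then routine substitution.
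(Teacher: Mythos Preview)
Your proof is correct: the master identity $(\ast)$ is indeed the right organizing principle, the double induction goes through exactly as you outline, and both the composition law $(\cdot_{N^m})_{N^n}=\cdot_{N^{m+n}}$ and the Nijenhuis property of $N^n$ on $(A,\cdot_{N^m})$ fall out by the substitutions you describe.

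There is nothing to compare against in the paper itself: the proposition is stated with a citation to \cite{gra-bi} and no proof is given. Your argument therefore supplies what the paper omits. The identity $(\ast)$ you isolate is essentially the classical ``hierarchy'' relation for Nijenhuis operators (in the Lie/Poisson setting it is the statement that $N$ generates a commuting family of deformations), and your reduction of all three assertions to $(\ast)$ is the standard and cleanest route. One small presentational remark: you use the composition law to rewrite the Nijenhuis condition for $N^n$ on $(A,\cdot_{N^m})$ as $N^n(a)\cdot_{N^m}N^n(b)=N^n(a\cdot_{N^{m+n}}b)$, which is fine, but note that the composition law already presupposes that $\cdot_{N^m}$ is a well-defined bilinear operation (it is, regardless of associativity), so there is no circularity; associativity of $\cdot_{N^k}$ then follows \emph{a posteriori} from part (i) as you say.
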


\medskip

In the following, we revise the Fr\"{o}licher-Nijenhuis bracket \cite{baishya-das3} for a given associative algebra and study the cohomology of a Nijenhuis operator. Let $A$ be any vector space (not necessarily an associative algebra). For any $f \in \mathrm{Hom} (A^{\otimes m}, A)$ and $g \in \mathrm{Hom} (A^{\otimes n}, A)$ with $m, n \geq 1$, the {\em contraction} of $f$ by $g$ is denoted by $i_g f \in \mathrm{Hom} (A^{\otimes m+n-1}, A)$ and it is defined by
\begin{align*}
    (i_g f ) (a_1, \ldots, a_{m+n-1}) := \sum_{i=1}^m (-1)^{ (i-1) (n-1)} ~ \! f ( a_1, \ldots, a_{i-1}, g (a_i, \ldots, a_{i+n-1}), \ldots, a_{m+n-1}),
\end{align*}
for $a_1, \ldots, a_{m+n-1} \in A$. Note that there is a one-to-one correspondence between associative algebra structures on the vector space $A$ and elements $\mu \in \mathrm{Hom}(A^{\otimes 2}, A)$ with $i_\mu \mu = 0 $.

Next, let $(A, ~ \! \cdot ~ \!)$ be an associative algebra with the corresponding multiplication map $\mu \in \mathrm{Hom}(A^{\otimes 2}, A)$, i.e. $\mu (a, b) = a \cdot b$, for all $a, b \in A$. Then the Hochschild coboundary operator of the associative algebra $(A, ~ \! \cdot ~ \!)$ is the map $\delta_\mathrm{Hoch} : \mathrm{Hom} (A^{\otimes n}, A) \rightarrow \mathrm{Hom} (A^{\otimes n+1}, A) $, for $n \geq 0$, given by $\delta_\mathrm{Hoch} f := (-1)^{n-1} ~ \! i_f \mu - i_\mu f$, for $f \in \mathrm{Hom}(A^{\otimes n}, A)$. Explicitly, we have
\begin{align*}
    (\delta_\mathrm{Hoch} f) (a_1, \ldots, a_{n+1}) =~& a_1 \cdot f (a_2, \ldots, a_{n+1}) + \sum_{i=1}^n (-1)^i ~ \! f (a_1, \ldots, a_i \cdot a_{i+1}, \ldots, a_{n+1}) \\
   ~& + (-1)^{n+1} ~ \! f (a_1, \ldots, a_n) \cdot a_{n+1},
\end{align*}
for $f \in \mathrm{Hom}(A^{\otimes n}, A)$ and $a_1, \ldots, a_{n+1} \in A$. Further, there is an associative operation (called the {\em cup-product} operation) $\smile ~ \! ~ \! : \mathrm{Hom}(A^{\otimes m}, A) \times \mathrm{Hom}(A^{\otimes n}, A) \rightarrow \mathrm{Hom}(A^{\otimes m+n}, A)$ given by
\begin{align*}
    (f \smile g) (a_1, \ldots, a_{m+n}) := f(a_1, \ldots, a_m) \cdot g (a_{m+1}, \ldots, a_{m+n})
\end{align*}
which yields a graded Lie bracket $[f, g]_\smile := f \smile g ~ \! - ~ \!  (-1)^{mn}~ \! g \smile f$ on the graded space $\oplus_{n=1}^\infty \mathrm{Hom}(A^{\otimes n} , A)$. With all the above notations, the main result of \cite{baishya-das3} can be stated as follows.

\begin{thm}
    Let $(A, ~ \! \cdot ~ \!)$ be an associative algebra.
    \begin{itemize}
        \item[(i)] For any $m, n \geq 1$, define a bracket $[~,~]_\mathsf{FN} : \mathrm{Hom}(A^{\otimes m}, A) \times \mathrm{Hom}(A^{\otimes n}, A) \rightarrow \mathrm{Hom}(A^{\otimes m+n}, A)$, called the {\bf Fr\"{o}licher-Nijenhuis bracket}, by
        \begin{align*}
            [f, g]_\mathsf{FN} := [f, g]_\smile ~ \!+ ~ \! (-1)^m ~ \! i_{(\delta_{\mathrm{Hoch} } f)} g ~ \! - ~ \! (-1)^{(m+1)n} ~ \!  i_{(\delta_{\mathrm{Hoch} } g)} f,
        \end{align*}
        for $f \in \mathrm{Hom}(A^{\otimes m}, A)$ and $g \in \mathrm{Hom}(A^{\otimes n}, A)$.
        Then $\big(  \oplus_{n=1}^\infty \mathrm{Hom}(A^{\otimes n}, A), [~,~]_\mathsf{FN}   \big)$ is a graded Lie algebra.
        \item[(ii)] A linear map $N : A \rightarrow A$ is a Nijenhuis operator on the associative algebra $(A, ~ \! \cdot ~ \!)$ if and only if $N \in  \mathrm{Hom}(A,A)$ is a Maurer-Cartan element of the graded Lie algebra $\big(  \oplus_{n=1}^\infty \mathrm{Hom}(A^{\otimes n}, A), [~,~]_\mathsf{FN}   \big)$.
    \end{itemize}
\end{thm}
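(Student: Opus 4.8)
The bracket $[~,~]_\mathsf{FN}$ is assembled from three classical operations on the Hochschild cochains $C^\bullet := \oplus_{n \geq 1}\mathrm{Hom}(A^{\otimes n}, A)$: the cup product $\smile$, the contraction $i$ (the Gerstenhaber circle product), and the Hochschild differential $\delta_\mathrm{Hoch}$. Throughout I grade $C^\bullet$ by arity, declaring $f \in \mathrm{Hom}(A^{\otimes m}, A)$ to have degree $m$, so that $[~,~]_\mathsf{FN}$ is a degree-zero operation and the graded antisymmetry to be checked reads $[f,g]_\mathsf{FN} = -(-1)^{mn}[g,f]_\mathsf{FN}$. The plan is to verify the two graded Lie axioms for part (i), and then to reduce the Maurer--Cartan equation to the defining identity of a Nijenhuis operator by a short direct computation for part (ii).

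\medskip

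Graded antisymmetry is essentially built into the definition and amounts to sign bookkeeping. The cup-product bracket $[f,g]_\smile = f \smile g - (-1)^{mn} g \smile f$ is already graded antisymmetric, while the two correction terms $(-1)^m i_{(\delta_\mathrm{Hoch} f)} g$ and $-(-1)^{(m+1)n} i_{(\delta_\mathrm{Hoch} g)} f$ are interchanged under $f \leftrightarrow g$ and reproduce the required overall factor; collecting the three pieces yields $[f,g]_\mathsf{FN} = -(-1)^{mn}[g,f]_\mathsf{FN}$.

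\medskip

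The graded Jacobi identity is the substantive part and, I expect, the main obstacle. Writing the cyclic sum $(-1)^{mp}[[f,g]_\mathsf{FN}, h]_\mathsf{FN} + (\text{cyclic in } f,g,h)$ and expanding, one produces terms of a few structural shapes---iterated cup products, contractions of cup products, cup products of contractions, and iterated contractions---each decorated by one or two copies of $\delta_\mathrm{Hoch}$. The verification reduces to a fixed catalogue of compatibility identities among the three operations: associativity of $\smile$; the facts that $\delta_\mathrm{Hoch}^2 = 0$ and that $\delta_\mathrm{Hoch}$ is a graded derivation of $\smile$, i.e. $\delta_\mathrm{Hoch}(f \smile g) = (\delta_\mathrm{Hoch} f) \smile g + (-1)^m f \smile (\delta_\mathrm{Hoch} g)$; the graded right-symmetric (pre-Lie) identity satisfied by the contraction $i$, which is what already makes the Gerstenhaber bracket a graded Lie bracket; and, most delicately, the distributive laws describing how a contraction interacts with $\smile$ and how $\delta_\mathrm{Hoch}$ intertwines with $i$ (for instance a usable expansion of $i_h(f \smile g)$ and of $\delta_\mathrm{Hoch}(i_g f)$). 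Once these component identities are in place, the cyclic sum cancels in matched groups; the difficulty is purely organizational, as the signs proliferate rapidly. A more conceptual route that I would pursue in parallel is a Cartan-type calculus: set $\mathcal{L}_f := [\, i_f, \delta_\mathrm{Hoch}\,]$ (the graded commutator of the insertion operator $i_f$ with $\delta_\mathrm{Hoch}$), show that $f \mapsto \mathcal{L}_f$ is injective and that $[\mathcal{L}_f, \mathcal{L}_g] = \mathcal{L}_{[f,g]_\mathsf{FN}}$; the Jacobi identity for $[~,~]_\mathsf{FN}$ then follows at once from the Jacobi identity of operator commutators, trading the bookkeeping above for the single ``magic formula'' identity.

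\medskip

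For part (ii), note that a linear map $N$ has arity $m = 1$, so the Maurer--Cartan equation for a degree-one element is simply $[N,N]_\mathsf{FN} = 0$. Taking $m = n = 1$ in the definition gives $[N,N]_\smile = 2\,(N \smile N)$ together with two coinciding correction terms $-\,i_{(\delta_\mathrm{Hoch} N)} N$, so that $\tfrac{1}{2}[N,N]_\mathsf{FN}(a,b) = N(a)\cdot N(b) - N\big((\delta_\mathrm{Hoch} N)(a,b)\big)$. Since $(\delta_\mathrm{Hoch} N)(a,b) = N(a)\cdot b + a \cdot N(b) - N(a \cdot b)$, this expression vanishes for all $a,b \in A$ exactly when $N(a)\cdot N(b) = N\big(N(a)\cdot b + a\cdot N(b) - N(a\cdot b)\big)$, i.e. exactly when $N$ is a Nijenhuis operator. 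As $\mathbf{k}$ has characteristic $0$, vanishing of $\tfrac12[N,N]_\mathsf{FN}$ is equivalent to the Maurer--Cartan equation, which proves (ii).
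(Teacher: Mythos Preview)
The paper does not actually prove this theorem: it is quoted as ``the main result of \cite{baishya-das3}'' and stated without argument. So there is no proof in the paper to compare against, and your proposal is being measured against a result the paper imports wholesale.

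That said, your treatment of part~(ii) is complete and correct: the sign bookkeeping for $m=n=1$ is accurate, the identification $(i_{\delta_\mathrm{Hoch} N} N)(a,b) = N\big((\delta_\mathrm{Hoch} N)(a,b)\big)$ is the right reading of the contraction formula with $f=N$ of arity $1$, and the resulting expression $\tfrac12[N,N]_\mathsf{FN}(a,b) = N(a)\cdot N(b) - N\big(N(a)\cdot b + a\cdot N(b) - N(a\cdot b)\big)$ is exactly the Nijenhuis obstruction. The antisymmetry check in~(i) is also fine.

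For the Jacobi identity in~(i) you give only an outline, and honestly acknowledge as much. Both routes you sketch are legitimate. The direct-expansion route does go through, but the ``distributive law'' you allude to for $i_h(f \smile g)$ is not a clean identity---contraction does not distribute over cup in a simple Leibniz form, and this is where the bookkeeping is genuinely unpleasant. The Cartan-calculus route via $\mathcal{L}_f := [\,i_f, \delta_\mathrm{Hoch}\,]$ is closer in spirit to how the classical Fr\"{o}licher--Nijenhuis bracket is handled in differential geometry and is very likely the approach taken in the cited preprint; the key identity $[\mathcal{L}_f, \mathcal{L}_g] = \mathcal{L}_{[f,g]_\mathsf{FN}}$ is the right target, though you would still need to supply the analogue of $[\mathcal{L}_f, i_g] = i_{[f,g]_\mathsf{FN}} \pm \ldots$ and verify injectivity of $f \mapsto \mathcal{L}_f$ (the latter is not automatic and uses that $\delta_\mathrm{Hoch}$ genuinely involves the multiplication). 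As written, part~(i) is a credible plan rather than a proof.
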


Let $(A, ~ \! \cdot ~ \!)$ be an associative algebra and $N: A \rightarrow A$ be a Nijenhuis operator on it. Note that the linear map $N$ induces a map
\begin{align}
    d_N : \mathrm{Hom}(A^{\otimes n}, A) \rightarrow \mathrm{Hom}(A^{\otimes n+1}, A) ~~~ \text{ given by } ~~~ d_N (f) = [N, f ]_\mathsf{FN}, \text{ for } f \in \mathrm{Hom}(A^{\otimes n}, A).
\end{align}
Explicitly, we have
\begin{align}\label{dn-map1}
    (d_N f) &(a_1, \ldots, a_{n+1}) = N (a_1) \cdot f (a_2, \ldots, a_{n+1}) - (-1)^n ~ \! f (a_1, \ldots, a_n) \cdot N (a_{n+1}) \\
    &+ \sum_{i=1}^n (-1)^{i} ~ \! f (a_1, \ldots, a_{i-1}, N (a_i) \cdot a_{i+1} + a_i \cdot N (a_{i+1}) - N (a_i \cdot a_{i+1}), \ldots, a_{n+1}) \nonumber \\
   & - N \big(  a_1 \cdot f (a_2, \ldots, a_{n+1}) + \sum_{i=1}^n (-1)^i ~ \! f (a_1, \ldots, a_i \cdot a_{i+1}, \ldots, a_{n+1}) + (-1)^{n+1} f (a_1, \ldots, a_n) \cdot a_{n+1}  \big), \nonumber
\end{align}
for $f \in \mathrm{Hom}(A^{\otimes n}, A)$ and $a_1, \ldots, a_{n+1} \in A.$ As $N$ is a Nijenhuis operator (i.e. $[N, N]_\mathsf{FN} = 0$), it turns out that $(d_N)^2 = 0$. We can extend it to a map $d_N : A \rightarrow \mathrm{Hom} (A,A)$ defined by
\begin{align}\label{dn-map2}
    d_N (a) (b) := N (b) \cdot a - a \cdot N (b) - N (b \cdot a - a \cdot b), \text{ for } a, b \in A.
\end{align}
Then it also follows that $\{ \oplus_{n=0}^\infty \mathrm{Hom}(A^{\otimes n}, A), d_N \}$ is a cochain complex which we call the cochain complex associated with the Nijenhuis operator $N$. The corresponding cohomology groups are called the {\em cohomology groups} of the Nijenhuis operator $N$ and are denoted by $H^\bullet (N)$.

\begin{exam}
    Let $(A, ~ \! \cdot ~ \!)$ be any associative algebra. Consider the identity map $\mathrm{Id}_A : A \rightarrow A$ as a Nijenhuis operator on $A$. Then it follows from (\ref{dn-map1}) and (\ref{dn-map2}) that the map $d_{N = \mathrm{Id}_A}$ is the zero map. As a consequence, we get that the cohomology groups of the Nijenhuis operator $N = \mathrm{Id}_A$ are simply given by $H^n (\mathrm{Id}_A) = \mathrm{Hom} (A^{\otimes n}, A)$, for all $n$. This shows that the cohomology of the identity map $\mathrm{Id}_A$ (viewed as a Nijenhuis operator) is independent of the associative multiplication of $A$. This is not surprising as the identity map is a Nijenhuis operator for any associative multiplication.
\end{exam}

\begin{remark}
    It is important to note that the map $d_N$ cannot be expressed as the Hochschild coboundary operator of the deformed associative algebra $A_N = (A, ~ \! \cdot_N ~ \!)$ with coefficients in any bimodule. Equivalently, the cohomology of the Nijenhuis operator $N$ is not the same as the Hochschild cohomology of the deformed associative algebra.
\end{remark}

\begin{remark}
In \cite{das-rota} the present author has introduced the cohomology of a (relative) Rota-Baxter operator while studying its deformations. Let $R: M \rightarrow A$ be a relative Rota-Baxter operator (see Example \ref{exam-rrb} (ii)). Then the cohomology of the operator $R$ is given by the cohomology of the cochain complex $\{ \oplus_{n=0}^\infty \mathrm{Hom} (M^{\otimes n}, A) , d_R \}$, where 
\begin{align*}
d_R (a) (u) :=~& R (u) \cdot a - a \cdot R (u) - R ( u \triangleleft a - a \triangleright u),\\
(d_R f) (u_1, \ldots, u_{n+1}) :=~& R (u_1) \cdot f (u_2, \ldots, u_{n+1}) - (-1)^n ~ \! f (u_1, \ldots, u_n) \cdot R (u_{n+1})\\ 
&+ \sum_{i=1}^n (-1)^i ~ \! f ( u_1, \ldots, u_{i-1}, R (u_i) \triangleright u_{i+1} + u_i \triangleleft R (u_{i+1}), \ldots, u_{n+1}) \\
&- R \big( u_1 \triangleleft f (u_2, \ldots, u_{n+1}) + (-1)^{n+1} ~ \! f (u_1, \ldots, u_n) \triangleright u_{n+1}   \big),
\end{align*}
for $a \in A$, $f \in \mathrm{Hom} (M^{\otimes n}, A)$ and $u, u_1, \ldots, u_{n+1} \in M$. On the other hand, we may consider the (lift) Nijenhuis operator $\widetilde{R} : A \oplus M \rightarrow A \oplus M$ (defined in Example \ref{exam-rrb} (ii)) on the semidirect product algebra and the cochain complex $\{ \oplus_{n=0}^\infty \mathrm{Hom} ( (A \oplus M)^{\otimes n}, A \oplus M ), d_{\widetilde{R}} \}$ for this Nijenhuis operator. Then it is easy to see that 
\begin{align*}
d_{\widetilde{R}} \big(  \mathrm{Hom} (M^{\otimes n}, A)  \big) \subset \mathrm{Hom} (M^{\otimes n+1}, A), \text{ for all } n.
\end{align*}
Hence $\{  \oplus_{n=0}^\infty   \mathrm{Hom} (M^{\otimes n}, A), d_{\widetilde{R}} \}$ is a subcomplex of $\{ \oplus_{n=0}^\infty \mathrm{Hom} ( (A \oplus M)^{\otimes n}, A \oplus M ), d_{\widetilde{R}} \}.$
 Moreover, while restricting to this subcomplex, one can observe that $d_{\widetilde{R}} = d_R$. This shows that the cochain complex of the relative Rota-Baxter operator $R$ can be seen as a subcomplex of the cochain complex of the Nijenhuis operator $\widetilde{R}$.
 \end{remark}
 
 %Here we observe that the above cochain complex of the relative Rota-Baxter operator $R$  First, note that the cochain complex of the Nijenhuis operator is given by $\{ \oplus_{n=0}^\infty \mathrm{Hom} ( (A \oplus M)^{\otimes n}, A \oplus M ), d_{\widetilde{R}} \}.$ 

Let $N$ be a Nijenhuis operator on an associative algebra $(A, ~ \! \cdot ~ \!)$. Note that the differential $d_N$ makes the triple $\big(  \oplus_{n=1}^\infty \mathrm{Hom}(A^{\otimes n}, A), [~,~]_\mathsf{FN}  , d_N \big)$ into a differential graded Lie algebra (dgLa). In the following result, we observe that this dgLa controls the deformations of the Nijenhuis operator $N$.

\begin{prop}
    Let $(A, ~ \! \cdot ~ \!)$ be an associative algebra and $N : A \rightarrow A$ be a Nijenhuis operator on it. Then for any linear map $N' : A \rightarrow A$, the sum $N + N' : A \rightarrow A$ is also a Nijenhuis operator if and only if $N'$ is a Maurer-Cartan element of the dgLa $\big(  \oplus_{n=1}^\infty \mathrm{Hom}(A^{\otimes n}, A), [~,~]_\mathsf{FN}   , d_N \big)$.
\end{prop}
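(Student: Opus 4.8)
The plan is to reduce the statement to the Maurer-Cartan characterization of Nijenhuis operators established in part (ii) of the preceding theorem, and then to expand the relevant bracket by bilinearity. By that result, a linear map $X : A \to A$ is a Nijenhuis operator precisely when $[X, X]_\mathsf{FN} = 0$. Applying this to $X = N + N'$ and using the bilinearity of the Fr\"{o}licher-Nijenhuis bracket, I would write
\begin{align*}
[N + N', N + N']_\mathsf{FN} = [N, N]_\mathsf{FN} + [N, N']_\mathsf{FN} + [N', N]_\mathsf{FN} + [N', N']_\mathsf{FN}.
\end{align*}
Since $N$ is already assumed to be a Nijenhuis operator, the first term vanishes by the same theorem.

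The next step is to combine the two cross terms. Both $N$ and $N'$ lie in $\mathrm{Hom}(A^{\otimes 1}, A)$, i.e.\ in the degree-$1$ (hence odd) part of the graded Lie algebra, so graded antisymmetry yields $[N, N']_\mathsf{FN} = [N', N]_\mathsf{FN}$. One may also verify this directly from the explicit formula for $[~,~]_\mathsf{FN}$, since the cup-bracket term $[f,g]_\smile$ is symmetric for degree-$1$ arguments and the two contraction terms merely interchange. Consequently,
\begin{align*}
[N + N', N + N']_\mathsf{FN} = 2\,[N, N']_\mathsf{FN} + [N', N']_\mathsf{FN}.
\end{align*}

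Finally, I would invoke the definition $d_N(N') = [N, N']_\mathsf{FN}$ to rewrite this as
\begin{align*}
[N + N', N + N']_\mathsf{FN} = 2 \Big( d_N(N') + \tfrac{1}{2}\,[N', N']_\mathsf{FN} \Big).
\end{align*}
Since the ground field has characteristic $0$, the left-hand side vanishes if and only if $d_N(N') + \tfrac{1}{2}[N', N']_\mathsf{FN} = 0$, which is exactly the Maurer-Cartan equation for $N'$ in the dgLa $\big( \oplus_{n=1}^\infty \mathrm{Hom}(A^{\otimes n}, A), [~,~]_\mathsf{FN}, d_N \big)$. Combining this with the characterization that $N + N'$ is a Nijenhuis operator if and only if $[N + N', N + N']_\mathsf{FN} = 0$ yields the claimed equivalence. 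The argument is essentially a formal consequence of the graded Lie algebra structure, so no genuine obstacle arises; the only point demanding care is the sign and degree bookkeeping that produces the factor $2$, and hence the correct normalization $\tfrac{1}{2}$ in the Maurer-Cartan equation.
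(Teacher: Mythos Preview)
Your proposal is correct and follows essentially the same route as the paper's proof: expand $[N+N',N+N']_\mathsf{FN}$ by bilinearity, drop $[N,N]_\mathsf{FN}$, combine the cross terms into $2[N,N']_\mathsf{FN}$, and rewrite via $d_N(N')=[N,N']_\mathsf{FN}$ to obtain the Maurer--Cartan equation. The only additions you make over the paper are the explicit justification of $[N,N']_\mathsf{FN}=[N',N]_\mathsf{FN}$ via graded antisymmetry and the remark that characteristic $0$ allows division by $2$, both of which are harmless elaborations.
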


\begin{proof}
    We have
    \begin{align*}
        [N+N', N+N']_\mathsf{FN} =~& [N, N]_\mathsf{FN} + [N, N']_\mathsf{FN} + [N', N]_\mathsf{FN} + [N', N']_\mathsf{FN} \\
        =~& 2 ~ \! [N, N']_\mathsf{FN} + [N', N']_\mathsf{FN} \quad (\because ~ [N, N]_\mathsf{FN} = 0).
    \end{align*}
    This shows that $[N+N', N+N']_\mathsf{FN} = 0$ if and only if $d_N (N') + \frac{1}{2} [N', N']_\mathsf{FN} = 0$. Hence the result follows.
\end{proof}

In the following, we first recall the cohomology of an NS-algebra and find a suitable homomorphism from the cohomology of a Nijenhuis operator $N$ to the cohomology of the induced NS-algebra. As a consequence, we also get a homomorphism from the cohomology of $N$ to the Hochschild cohomology of the deformed associative algebra $A_N = (A, ~ \! \cdot_N ~ \!)$.

\begin{defn}\cite{leroux}
 An {\bf NS-algebra} $(A, \prec, \succ, \curlyvee)$ is a vector space $A$ endowed with three bilinear operations $\prec, \succ, \curlyvee : A \times A \rightarrow A$ that satisfy the following set of identities:
    \begin{align*}
        (a \prec b) \prec c =~& a \prec (b \prec c + b \succ c + b \curlyvee c),\\
        (a \succ b) \prec c =~& a \succ (b \prec c), \\
        (a \prec b + a \succ b + a \curlyvee b) \succ c =~& a \succ (b \succ c),\\
        ( a \prec b + a \succ b + a \curlyvee b) \curlyvee c + (a \curlyvee b) \prec c = ~& a \succ (b \curlyvee c ) + a \curlyvee (b \prec c + b \succ c + b \curlyvee c) , \text{ for } a, b, c \in A.
    \end{align*}
\end{defn}

\begin{remark}
    In \cite{loday-di} Loday introduced the notion of a dendriform algebra in his study of the algebraic structure behind planar binary trees. 
    %Recall that a dendriform algebra $(A, \prec, \succ)$ is a vector space $A$ with binary operations $\prec, \succ : A \times A \rightarrow A$ satisfying
    %\begin{align*}
     %    (a \prec b) \prec c = a \prec (b \prec c + b \succ c ), \quad  (a \succ b) \prec c = a \succ (b \prec c), \quad  (a \prec b + a \succ b ) \succ c =~& a \succ (b \succ c),
    %\end{align*}
    %for $a, b, c \in A$. 
    Note that an NS-algebra $(A, \prec, \succ, \curlyvee)$ for which $\curlyvee = 0$ is nothing but a dendriform algebra.
\end{remark}

Let $(A, \prec, \succ, \curlyvee)$ is an NS-algebra. Then $A$ can be given an associative algebra structure with the multiplication
\begin{align*}
a * b := a \prec b + a \succ b + a \curlyvee b, \text{ for } a, b \in A.
\end{align*}
The associative algebra $(A, ~ \! * ~ \!)$ is said to be the {\em `total associative algebra'} of the given NS-algebra. On the other hand, if $(A, ~ \cdot ~ \!)$ is an associative algebra and $N : A \rightarrow A$ is a Nijenhuis operator on it, then the vector space $A$ inherits an NS-algebra structure with the operations 
\begin{align*}
    a \prec_N b := a \cdot N (b), \quad a \succ_N b := N (a) \cdot b ~~~~ \text{ and } ~~~~ a \curlyvee_N b := - N (a \cdot b), \text{ for } a, b \in A.
\end{align*}
The NS-algebra $(A, \prec_N, \succ_N,\curlyvee_N)$ is denoted by $A_\mathrm{NS}$ and is said to be induced by the Nijenhuis operator $N$. Observe that the corresponding total associative algebra is precisely the deformed algebra $A_N = (A, ~ \! \cdot_N ~ \!)$ given in (\ref{deformed-alg}).

\medskip

Let $C_n$ be the set of first $n$ natural numbers. Since we will treat the elements of $C_n$ as certain symbols, we write $C_n = \{ [1], [2], \ldots, [n] \}$ for conventional purposes. Let $A$ be any vector space (not necessarily having any additional structure). Then for any $n \geq 1$, we define a linear space $\mathcal{O}_A (n)$ by
\begin{align*}
    \mathcal{O}_A (1) := \mathrm{Hom} ( {\bf k} [C_1] \otimes A, A) \cong \mathrm{Hom} (A, A) \quad \text{ and } \quad \mathcal{O}_A (n) = \mathrm{Hom} ( {\bf k} [C_{n+1}] \otimes A^{\otimes n}, A), \text{ for } n \geq 2.
\end{align*}
Given any $f \in \mathcal{O}_A (m)$, $g \in \mathcal{O}_A (n)$ and $1 \leq i \leq m$, we define an element (called the {\em partial composition}) $f \circ_i g \in \mathcal{O}_A (m+n-1)$ by

\begin{align}
    &(f \circ_i g) ([r]; a_1, \ldots, a_{m+n-1}) \label{ns-circ} \\ \medskip
    &= \begin{cases} \medskip
        f (  {\scriptstyle [r]} ; a_1, \ldots, a_{i-1} , g ( {\scriptstyle [1] + \cdots + [n+1]} ; a_i, \ldots, a_{i+n-1}) , a_{i+n}, \ldots, a_{m+n-1}) & \text{ if } 1 \leq r \leq i-1, \\
        \medskip
        f ( {\scriptstyle [i]}; a_1, \ldots, a_{i-1} , g ( {\scriptstyle [r-i+1]}; a_i, \ldots, a_{i+n-1}), a_{i+n}, \ldots, a_{m+n-1})  & \text{ if } i \leq r \leq i+n -1, \\
        \medskip
        f ( {\scriptstyle [r-n+1]}; a_1, \ldots, a_{i-1}, g(  {\scriptstyle [1] + \cdots + [n+1] ; a_i, \ldots, a_{i+n-1}}), a_{i+n}, \ldots, a_{m+n-1} ) & \text{ if } {i+n \leq r \leq m+n-1},\\
        \medskip
        f ( {\scriptstyle [i]} ; a_1, \ldots, a_{i-1} , g ( {\scriptstyle [n+1]}; a_i, \ldots, a_{i+n-1}), a_{i+n}, \ldots, a_{m+n-1}) &\\
         ~~ + f ( {\scriptstyle [m+1]}; a_1, \ldots, a_{i-1} , g ({\scriptstyle [1] + \cdots + [n+1]}; a_i, \ldots, a_{i+n-1}), a_{i+n}, \ldots, a_{m+n-1}) &  \text{ if } r = m+n.
    \end{cases} \nonumber
\end{align}

\medskip

\noindent Then it has been shown in \cite[Theorem 5.1]{das-ns} that the collection of spaces $\{ \mathcal{O}_A (n ) \}_{n \geq 1}$ endowed with the above partial compositions forms a nonsymmetric operad. This in particular implies that the shifted graded space $\mathcal{O}_A (\bullet + 1) = \oplus_{n=0}^\infty \mathcal{O}_A (n+1)$ carries a graded Lie bracket
\begin{align*}
    \llbracket f, g \rrbracket := \sum_{i=1}^{m} (-1)^{(i-1) (n-1)} ~ \! f \circ_i g - (-1)^{(m-1)(n-1)} \sum_{i=1}^{n} (-1)^{(i-1) (m-1)} ~ \! g \circ_i f,
\end{align*}
for $f \in \mathcal{O}_A (m)$ and $g \in \mathcal{O}_A (n)$. Then an NS-algebra structure on the vector space $A$ is equivalent to having a Maurer-Cartan element of the graded Lie algebra $ \big(  \mathcal{O}_A (\bullet + 1), \llbracket ~, ~ \rrbracket \big)$. Explicitly, let the vector space $A$ be endowed with three bilinear operations $\prec, \succ, \curlyvee : A \times A \rightarrow A$. Then we define an element $\pi \in \mathcal{O}_A (2) = \mathrm{Hom} ({\bf k} [C_3] \otimes A^{\otimes 2}, A)$ by
\begin{align*}
    \pi ([1] ; a, b) = a \prec b, \quad \pi ([2]; a, b) = a \succ b ~~~~~ \text{ and } ~~~~~ \pi ([3]; a, b) = a \curlyvee b, \text{ for } a, b \in A.
\end{align*}
Then $(A, \prec, \succ, \curlyvee)$ is an NS-algebra if and only if $\pi \in \mathcal{O}_A (2)$ is a Maurer-Cartan element of the graded Lie algebra $\big(  \mathcal{O}_A (\bullet + 1), \llbracket ~, ~ \rrbracket \big)$. In this case, the Maurer-Cartan element $\pi$ induces a map 
\begin{align*}
    \delta_\pi : \mathcal{O}_A (n) \rightarrow \mathcal{O}_A (n+1) ~~~~ \text{ given by } ~~~~ \delta_\pi (f) := (-1)^{n-1} ~ \! \llbracket \pi, f \rrbracket, \text{ for } f \in \mathcal{O}_A (n). 
\end{align*}
Then it follows that $\{ \oplus_{n=1}^\infty \mathcal{O}_A (n) , \delta_\pi \}$ is a cochain complex. The corresponding cohomology groups are called the cohomologies of the NS-algebra $(A, \prec, \succ, \curlyvee)$ and they are denoted by $H^\bullet_\mathrm{NS} (A).$ 

\medskip

Next, let $(A, ~ \! \cdot ~ \!)$ be an associative algebra and $N : A \rightarrow A$ be a Nijenhuis operator on it. Then we have seen that $A_\mathrm{NS} = (A, \prec_N, \succ_N, \curlyvee_N)$ is an NS-algebra. Let $\pi_N \in \mathcal{O}_A (2) = \mathrm{Hom} ( {\bf k} [C_3]\otimes A^{\otimes 2}, A)$ be the corresponding Maurer-Cartan element in the graded Lie algebra $\big(  \mathcal{O}_A (\bullet + 1), \llbracket ~, ~ \rrbracket \big)$. In other words, $\pi_N ([1]; a, b) = a \cdot N(b)$, $\pi_N ([2]; a, b) = N (a) \cdot b$ and $\pi_N ([3]; a, b) = - N (a\cdot b)$, for $a, b \in A$. For each $n \geq 1$, we now define a map
\begin{align*}
    &\Theta_n : \mathrm{Hom} (A^{\otimes n}, A) \longrightarrow \mathcal{O}_A (n+1) = \mathrm{Hom} ({\bf k} [C_{n+2}] \otimes A^{\otimes n+1}, A) ~~~~ \text{ by } \\ \medskip
    \Theta_n (f)([r];~& a_1, \ldots, a_{n+1}) := \begin{cases}
        (-1)^{n+1} ~ \! a_1 \cdot f (a_2, \ldots, a_{n+1}) & \text{ if } r = 1,\\ \medskip
         \qquad  0 & \text{ if } 2 \leq r \leq n,\\ \medskip
        f (a_1, \ldots, a_{n+1}) \cdot a_{n+1} & \text{ if } r = n+1,\\ \medskip
        \sum_{i=1}^n (-1)^{n+i+1} ~ \! f (a_1, \ldots, a_i \cdot a_{i+1}, \ldots, a_{n+1}) & \text{ if } r = n+2,
    \end{cases}
\end{align*}
for $f \in \mathrm{Hom}(A^{\otimes n}, A)$, $[r] \in C_{n+2}$ and $a_1, \ldots
, a_{n+1} \in A$. Then it is easy to observe that $\Theta_1 (N) = \pi_N.$ Moreover, we have the following result.

\begin{prop}\label{prop-nij-ns}
    With the above notations, the collection of maps $\{ \Theta_n \}_{n \geq 1}$ describes a homomorphism from the reduced cochain complex $\{ \oplus_{n=1}^\infty \mathrm{Hom} (A^{\otimes n}, A) , d_N \}$ of the Nijenhuis operator $N$ to the shifted cochain complex $\{ \oplus_{n=1}^\infty \mathcal{O}_A (n+1) , \delta_{\pi_N} \}$ of the induced NS-algebra $A_\mathrm{NS} = (A, \prec_N, \succ_N, \curlyvee_N)$. As a result, there is a homomorphism $\Theta_\bullet : H^\bullet (N) \rightarrow H^{\bullet +1}_\mathrm{NS} (A_\mathrm{NS})$ between the corresponding cohomology groups.
\end{prop}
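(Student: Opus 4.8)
The plan is to prove that the family $\{\Theta_n\}_{n \ge 1}$ is a morphism of cochain complexes, i.e. that the square
\[
\Theta_{n+1} \circ d_N = \delta_{\pi_N} \circ \Theta_n
\]
commutes as maps $\mathrm{Hom}(A^{\otimes n}, A) \to \mathcal{O}_A(n+2)$ for every $n \ge 1$. Once this single identity is verified, the existence of the induced map $\Theta_\bullet : H^\bullet(N) \to H^{\bullet + 1}_\mathrm{NS}(A_\mathrm{NS})$ is automatic, since any chain map descends to cohomology; the degree shift by $1$ is precisely the bookkeeping that $f \in \mathrm{Hom}(A^{\otimes n}, A)$ sits in Nijenhuis-degree $n$ while $\Theta_n(f) \in \mathcal{O}_A(n+1)$ sits in NS-degree $n+1$. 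Thus the whole proposition reduces to the commutativity of this square, with the internal signs of $\delta_{\pi_N}$ and of $\Theta$ arranged so that the identity holds on the nose (no global sign).

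To verify it I would expand both composites as elements of $\mathcal{O}_A(n+2) = \mathrm{Hom}(\mathbf{k}[C_{n+3}] \otimes A^{\otimes n+2}, A)$ and compare their values on a generic argument $([r]; a_1, \ldots, a_{n+2})$, symbol by symbol for $[r] \in C_{n+3}$. On the right I would use $\delta_{\pi_N}(\Theta_n f) = (-1)^n \llbracket \pi_N, \Theta_n f \rrbracket$ and unfold the bracket into the partial compositions $\pi_N \circ_i \Theta_n f$ and $\Theta_n f \circ_j \pi_N$, each evaluated through the case-by-case recipe (\ref{ns-circ}); here the three components $\pi_N([1];-) = a \cdot N(b)$, $\pi_N([2];-) = N(a)\cdot b$ and $\pi_N([3];-) = -N(a\cdot b)$ feed into the various symbol slots. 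On the left I would substitute the explicit formula (\ref{dn-map1}) for $d_N f$ into the definition of $\Theta_{n+1}$, so that each of the four nonzero slots of $\Theta_{n+1}$ receives the corresponding pieces of $d_N f$.

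The computation then organizes itself according to the value of $r$. The outer slots $r = 1$ and $r = n+2$, which collect the leading $N(a_1)\cdot(-)$ and trailing $(-)\cdot N(a_{n+2})$ contributions of $d_N f$ (the $\succ_N$ and $\prec_N$ pieces), match in a direct way. The interior symbols $2 \le r \le n+1$ require checking that $\delta_{\pi_N}(\Theta_n f)$ also vanishes there, which follows from the fact that $\Theta_n f$ is supported only on the slots $[1], [n+1], [n+2]$ together with the symbol-routing prescribed in (\ref{ns-circ}). The genuinely delicate part, which I expect to be the main obstacle, is the final slot $r = n+3$: this is where all the $\curlyvee_N$-type contributions accumulate, namely the terms in which the Nijenhuis operator is applied to a product — both the outer $-N(\cdots)$ summand of (\ref{dn-map1}) and the $\pi_N([3];-) = -N(a\cdot b)$ components entering the partial compositions. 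Matching these demands careful tracking of the operadic signs $(-1)^{(i-1)(n-1)}$ against the signs $(-1)^{n+i+1}$ and $(-1)^{n+1}$ built into $\Theta$, and the agreement ultimately rests on $\pi_N$ being a genuine Maurer--Cartan element (equivalently, on the defining Nijenhuis identity for $N$), which is what makes the $N$-on-products terms reassemble correctly rather than merely cancel.

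A more conceptual alternative would be to show directly that $\Theta$ intertwines the adjoint actions of the two Maurer--Cartan elements, that is, $\Theta_{n+1}([N, f]_\mathsf{FN}) = (-1)^n \llbracket \Theta_1(N), \Theta_n(f)\rrbracket$, using $\Theta_1(N) = \pi_N$; since $d_N = [N, -]_\mathsf{FN}$ and $\delta_{\pi_N}(g) = (-1)^n\llbracket \pi_N, g \rrbracket$ for $g \in \mathcal{O}_A(n+1)$, this is literally the chain-map identity. However, establishing this intertwining is itself essentially the slot-by-slot calculation just described, so I would take the direct verification as the primary route and emphasize that no full graded-Lie-algebra morphism property of $\Theta$ is needed — only its compatibility with the single bracket against $N$.
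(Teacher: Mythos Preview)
Your proposal is correct and follows essentially the same route as the paper: a slot-by-slot verification of $\delta_{\pi_N}\circ\Theta_n=\Theta_{n+1}\circ d_N$ by evaluating both sides on $([r];a_1,\ldots,a_{n+2})$ and treating the cases $r=1$, $2\le r\le n+1$, $r=n+2$, and $r=n+3$ separately; the paper likewise carries out $r=1$ and $r=n+2$ explicitly, observes the interior slots vanish, and declares $r=n+3$ ``tedious but straightforward.''

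One small correction to your expectations: the identity $\Theta_{n+1}([N,f]_\mathsf{FN})=(-1)^n\llbracket\pi_N,\Theta_n(f)\rrbracket$ is a formal identity in $N$, $f$ and $\mu$, and does \emph{not} use the Nijenhuis condition on $N$ (equivalently, the Maurer--Cartan property of $\pi_N$). That condition is what makes $d_N$ and $\delta_{\pi_N}$ square to zero, but the chain-map equation itself holds for any linear $N$. So in the $r=n+3$ slot you should expect the terms to match by pure bookkeeping of signs and the operadic composition rule, without invoking $N(a)\cdot N(b)=N(\cdots)$.
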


\begin{proof}
    For any $f \in \mathrm{Hom} (A^{\otimes n}, A)$ and $a_1, \ldots, a_{n+1} \in A$, we observe that
    \begin{align*}
        &\llbracket \pi_N, \Theta_n (f) \rrbracket ([1]; a_1, \ldots, a_{n+2}) \\
        &= \pi_N \big( [1] ; \Theta_n (f) ([1]; a_1, \ldots, a_{n+1}), a_{n+2} \big) + (-1)^n ~ \! \pi_N \big( [1]; a_1, \Theta_n (f) ( {\scriptstyle  [1] + \cdots + [n+2]}; a_2, \ldots, a_{n+2})  \big)\\
& \qquad - (-1)^n ~ \! \Theta_n (f) \big( [1]; \pi_N ([1]; a_1, a_2), a_3, \ldots, a_{n+2} \big) \\ 
& \qquad - (-1)^n ~ \! \sum_{i=2}^{n+1} (-1)^{i-1} ~ \! \Theta_n (f) \big( [1]; a_1, \ldots, a_{i-1}, \pi_N ( {\scriptstyle  [1] + [2] + [3]}; a_i, a_{i+1}), \ldots, a_{n+2} \big)\\
        &= (-1)^{n+1} ~ \! a_1 \cdot f (a_2, \ldots, a_{n+1}) \cdot N (a_{n+2}) + (-1)^n ~ \! a_1 \cdot N \big(  (-1)^{n+1} (\delta_\mathrm{Hoch} f) (a_2, \ldots, a_{n+2})    \big) \\
    & \qquad + a_1 \cdot N (a_2) \cdot f (a_3, \ldots, a_{n+2}) + \sum_{i=2}^{n+1} (-1)^{i-1} ~ \! a_1 \cdot f (a_2, \ldots, a_{i-1}, a_i \cdot_N a_{i+1}, \ldots, a_{n+2}) \\
    &= a_1 \cdot \big( [N, f]_\mathsf{FN} (a_2, \ldots, a_{n+2}) \big) \\
        &= (-1)^n ~ \!(\Theta_{n+1} [N, f]_\mathsf{FN}) ([1]; a_1, \ldots, a_{n+2}).
    \end{align*}
For any $2 \leq r \leq n+1$, one can easily observe from the definitions that 
    \begin{align*}
    \llbracket \pi_N, \Theta_n (f) \rrbracket ([r]; a_1, \ldots, a_{n+2}) = 0 =  (-1)^n ~ \! (\Theta_{n+1} [N, f]_\mathsf{FN}) ([r]; a_1, \ldots, a_{n+2}).
    \end{align*}
We also have 
    \begin{align*}
         & \llbracket \pi_N, \Theta_n (f) \rrbracket ([n+2]; a_1, \ldots, a_{n+2}) \\
         &= \pi_N \big( [2]; \Theta_n (f) ( {\scriptstyle  [1] +\cdots + [n+2]}; a_1, \ldots, a_{n+1}), a_{n+2}   \big) + (-1)^n ~ \! \pi_N \big( [2]; a_1, \Theta_n (f) ([n+1]; a_2, \ldots, a_{n+2})  \big) \\
         & \qquad -(-1)^n \sum_{i=1}^n (-)^{i-1} ~ \! \Theta_n (f) \big( [n+1]; a_1, \ldots, a_{i-1}, \pi_N ( {\scriptstyle [1]+[2]+[3]}; a_i, a_{i+1}), \ldots, a_{n+2} \big) \\
         & \qquad - \Theta_n (f) \big( [n+1]; a_1, \ldots, a_n, \pi_N ([2]; a_{n+1}, a_{n+2})  \big) \\
         &= (-1)^{n+1} ~ \! N \big(  (\delta_\mathrm{Hoch} f) (a_1, \ldots, a_{n+1})  \big) \cdot a_{n+2} + (-1)^n ~ \! N (a_1) \cdot f (a_2, \ldots, a_{n+1}) \cdot a_{n+2} \\
         & \qquad + (-1)^n \sum_{i=1}^n (-1)^i ~ \! f (a_1, \ldots, a_{i-1}, a_i \cdot_N a_{i+1}, \ldots, a_{n+1}) \cdot a_{n+2} - f(a_1, \ldots, a_n) \cdot N (a_{n+1}) \cdot a_{n+2} \\
         &= (-1)^n \big(  [N, f]_\mathsf{FN} (a_1, \ldots, a_{n+1}) \big) \cdot a_{n+2} \\
         &= (-1)^n ~ \! (\Theta_{n+1} [N, f]_\mathsf{FN}) ([n+2]; a_1, \ldots, a_{n+2}).
    \end{align*}
Finally, it is also tedious but straightforward to see that
    \begin{align*}
          \llbracket \pi_N, \Theta_n (f) \rrbracket ([n+3]; a_1, \ldots, a_{n+2})
         = (-1)^n ~ \! (\Theta_{n+1} [N, f]_\mathsf{FN}) ([n+3]; a_1, \ldots, a_{n+2}).
    \end{align*}
    Combining all these cases, we obtain that $\llbracket \pi_N, \Theta_n (f) \rrbracket = (-1)^n ~ \! \Theta_{n+1} ([N, f]_\mathsf{FN} )$. Hence
    \begin{align*}
        (\delta_{\pi_N} \circ \Theta_n) (f) = (-1)^n ~ \! \llbracket \pi_N, \Theta_n (f) \rrbracket = \Theta_{n+1} [N, f]_\mathsf{FN} = (\Theta_{n+1} \circ d_N) (f).
    \end{align*}
    This concludes the proof.
\end{proof}

It has been shown in \cite[Theorem 5.4]{das-ns} that there is a homomorphism from the cochain complex of an NS-algebra to the Hochschild cochain complex of the corresponding total associative algebra with coefficients in the adjoint bimodule. Combining this fact with Proposition \ref{prop-nij-ns}, we get the following.

\begin{thm}
    Let $(A, ~ \! \cdot ~ \!)$ be an associative algebra and $N: A \rightarrow A$ be a Nijenhuis operator on it. Then the collection of maps $\{    \Psi_n : \mathrm{Hom} (A^{\otimes n}, A) \rightarrow \mathrm{Hom} (A^{\otimes n+1}, A) \}_{n \geq 1}$ given by
    \begin{align*}
        \Psi_n (f) (a_1, \ldots, a_{n+1} ) := (\Theta_n (f)) ({\scriptstyle [1] + \cdots + [n+2]}; a_1, \ldots, a_{n+1}) = (-1)^{n+1} ~ \! (\delta_\mathrm{Hoch} f) (a_1, \ldots, a_{n+1})
    \end{align*}
    induces a homomorphism $\Psi_\bullet : H^\bullet (N) \rightarrow H^{\bullet + 1}_\mathrm{Hoch} (A_N; A_N)$ from the cohomology of the Nijenhuis operator $N$ to the Hochschild cohomology of the deformed associative algebra $A_N$.
\end{thm}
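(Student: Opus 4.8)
The plan is to exhibit $\Psi_\bullet$ as the composite of two cochain maps already at our disposal, so that the cohomology-level statement follows formally, and then to verify the closed-form expression by a direct summation.

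Recall from Proposition \ref{prop-nij-ns} that $\{ \Theta_n \}_{n \geq 1}$ is a cochain map from the reduced complex $\{ \oplus_n \mathrm{Hom}(A^{\otimes n}, A), d_N \}$ of the Nijenhuis operator to the shifted complex $\{ \oplus_n \mathcal{O}_A(n+1), \delta_{\pi_N} \}$ of the induced NS-algebra $A_\mathrm{NS}$. On the other hand, by \cite[Theorem 5.4]{das-ns} there is a cochain map $\{ \Phi_m \}$ from the NS-algebra complex to the Hochschild complex of the total associative algebra of $A_\mathrm{NS}$ with coefficients in the adjoint bimodule; explicitly, $\Phi_m$ sends $g \in \mathcal{O}_A(m)$ to its evaluation $g({\scriptstyle [1] + \cdots + [m+1]}; -)$ at the sum of all symbols. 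Since the total associative algebra of $A_\mathrm{NS}$ is precisely the deformed algebra $A_N = (A, \cdot_N)$, the target is the Hochschild complex $\{ \oplus_n \mathrm{Hom}(A^{\otimes n}, A), \delta_\mathrm{Hoch}^{A_N} \}$.

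Comparing the defining formula for $\Psi_n$ with the description of $\Phi_{n+1}$, I read off at once that $\Psi_n = \Phi_{n+1} \circ \Theta_n$ for every $n$. As a composite of cochain maps, $\Psi_\bullet$ is itself a cochain map: using the two chain-map identities $\delta_{\pi_N} \circ \Theta_n = \Theta_{n+1} \circ d_N$ and $\delta_\mathrm{Hoch}^{A_N} \circ \Phi_{n+1} = \Phi_{n+2} \circ \delta_{\pi_N}$, one obtains $\delta_\mathrm{Hoch}^{A_N} \circ \Psi_n = \Psi_{n+1} \circ d_N$. A cochain map descends to cohomology, which yields the desired homomorphism $\Psi_\bullet : H^\bullet(N) \to H^{\bullet+1}_\mathrm{Hoch}(A_N; A_N)$. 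This composition is the structural heart of the argument and needs no fresh computation.

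It remains to justify the closed form. For fixed $f$ and arguments $a_1, \ldots, a_{n+1}$, evaluating $\Theta_n(f)$ at the sum $[1] + \cdots + [n+2]$ amounts to summing its piecewise definition over $r = 1, \ldots, n+2$. The slots $2 \leq r \leq n$ contribute nothing, while $r = 1$, $r = n+1$ and $r = n+2$ supply, respectively, the outer-left term, the outer-right term, and the inner-contraction terms; collecting signs gives exactly $(-1)^{n+1}(\delta_\mathrm{Hoch} f)(a_1, \ldots, a_{n+1})$. The one point worth stating with care is that the differential in the target is the \emph{deformed} Hochschild operator $\delta_\mathrm{Hoch}^{A_N}$, built from $\cdot_N$, even though the closed form for the \emph{value} $\Psi_n(f)$ is written using the original product $\cdot$ through $\delta_\mathrm{Hoch}$. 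There is no tension here: the chain-map property is inherited from the composition and holds against $\delta_\mathrm{Hoch}^{A_N}$, so this is the only spot where one must stay vigilant about which associative structure is in play.
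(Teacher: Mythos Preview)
Your proposal is correct and follows exactly the paper's own argument: the paper simply states that the theorem is obtained by combining Proposition~\ref{prop-nij-ns} with \cite[Theorem~5.4]{das-ns}, and you have spelled out precisely this composition $\Psi_n = \Phi_{n+1}\circ\Theta_n$ together with the routine summation over $[r]$ that yields the closed form. Your additional remark distinguishing the original $\delta_\mathrm{Hoch}$ appearing in the formula for $\Psi_n(f)$ from the deformed $\delta_\mathrm{Hoch}^{A_N}$ governing the target complex is a helpful clarification the paper leaves implicit.
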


\section{Cohomology of Nijenhuis algebras}\label{sec3}
In this section, we introduce and study the cohomology of Nijenhuis algebras. Among others, we find a long exact sequence connecting the cohomology groups of a Nijenhuis algebra and the cohomologies of the underlying associative algebra and the Nijenhuis operator.

\begin{defn}
    A {\bf Nijenhuis algebra} is an associative algebra $(A, ~ \! \cdot ~ \!)$ endowed with a distinguished Nijenhuis operator $N: A \rightarrow A$ on it.
\end{defn}

A Nijenhuis algebra as above is denoted by the pair $( (A, ~ \! \cdot ~ \!), N)$ or simply by $(A,N)$ when the associative multiplication of $A$ is clear from the context.

Let $( (A, ~ \! \cdot ~ \!), N)$ and $( (A', ~ \! \cdot' ~ \!), N')$ be two Nijenhuis algebras. A {\em homomorphism} of Nijenhuis algebras from $( (A, ~ \! \cdot ~ \!), N)$ to $( (A', ~ \! \cdot' ~ \!), N')$ is an associative algebra morphism $\varphi : A \rightarrow A'$ satisfying $N' \circ \varphi = \varphi \circ N$. Further, it is said to be an isomorphism of Nijenhuis algebras if $\varphi$ is additionally bijective. Given a Nijenhuis algebra $( (A, ~ \! \cdot ~ \!), N)$, we denote the group of all Nijenhuis algebra automorphisms (self isomorphisms) of $( (A, ~ \! \cdot ~ \!), N)$ by the notation $\mathrm{Aut}(A, N).$

\medskip

Let $( (A, ~ \! \cdot ~ \!), N)$ be a Nijenhuis algebra. A {\bf Nijenhuis bimodule} over $( (A, ~ \! \cdot ~ \!), N)$ is given by a pair $((M, \triangleright, \triangleleft), N_M)$ consisting of an $A$-bimodule $(M, \triangleright, \triangleleft)$ with a linear map $N_M: M \rightarrow M$ satisfying
\begin{align*}
    N(a) \triangleright N_M (u) =~& N_M \big(  N (a) \triangleright u + a \triangleright N_M (u) - N_M (a \triangleright u)   \big),\\
    N_M (u) \triangleleft N (a) =~& N_M \big( N_M (u) \triangleleft a + u \triangleleft N (a) - N_M (u \triangleleft a) \big),
\end{align*}
for all $a \in A$ and $u \in M$.

\begin{exam}
    Let $( (A, ~ \! \cdot ~ \!), N)$ be any Nijenhuis algebra. Then there is a natural Nijenhuis bimodule $((A, \triangleright_\mathrm{ad},\triangleleft_\mathrm{ad}), N)$ over the Nijenhuis algebra $( (A, ~ \! \cdot ~ \!), N)$, where $a \triangleright_\mathrm{ad} b = a \triangleleft_\mathrm{ad} b = a \cdot b$ (for $a, b \in A$) defines the adjoint $A$-bimodule structure on the vector space $A$. This is called the adjoint Nijenhuis bimodule.
\end{exam}

\begin{exam} Let $(A, ~ \! \cdot ~ \! )$ be an associative algebra and $(M, \triangleright, \triangleleft)$ be any $A$-bimodule.
    \begin{itemize}
        \item[(i)]  Then $((M, \triangleright, \triangleleft), \mathrm{Id}_M)$ is a Nijenhuis bimodule over the Nijenhuis algebra $((A, ~ \! \cdot ~ \! ), \mathrm{Id}_A)$.
        \item[(ii)]  For any $x \in A$, consider the Nijenhuis algebra $( (A, ~ \! \cdot ~ \!), N = l_x)$. Then it can be shown that $( (M, \triangleright, \triangleleft), N_M = x \triangleright - )$ is a Nijenhuis bimodule over the Nijenhuis algebra $( (A, ~ \! \cdot ~ \!), N = l_x)$. Similarly, the pair $( (M, \triangleright, \triangleleft), N_M = - \triangleleft x)$ is a Nijenhuis bimodule over the Nijenhuis algebra $( (A, ~ \! \cdot ~ \!), N = r_x)$.
    \end{itemize}
\end{exam}

\begin{exam}\label{kth-power}
   Let $((A, ~ \! \cdot ~ \! ), N)$ be a Nijenhuis algebra and $((M, \triangleright, \triangleleft), N_M)$ be a Nijenhuis bimodule over it. Then for any $k \geq 0$, the pair $((M, \triangleright, \triangleleft), (N_M)^k )$ is a Nijenhuis bimodule over the Nijenhuis algebra $((A, ~ \! \cdot ~ \! ), N^k)$.
\end{exam}

\begin{exam}
   Let $((A, ~ \! \cdot ~ \! ), N)$ and $((A', ~ \! \cdot' ~ \! ), N')$ be two Nijenhuis algebras and $\varphi : A \rightarrow A'$ be a homomorphism of Nijenhuis algebras. Then $((A' , \triangleright, \triangleleft), N')$ is a Nijenhuis bimodule over the Nijenhuis algebra $((A, ~ \! \cdot ~ \! ), N)$, where $\triangleright : A \times A' \rightarrow A'$ and $\triangleleft : A' \times A \rightarrow A'$ are respectively given by $a \triangleright u = \varphi (a) \cdot' u$ and $u \triangleleft a = u \cdot' \varphi (a)$, for $a \in A$ and $u \in A'$.
\end{exam}

\begin{exam} (Lifting of Rota-Baxter bimodules) A {\em Rota-Baxter algebra} is a pair $((A, ~ \! \cdot ~ \! ), R)$ consisting of an associative algebra $(A, ~ \! \cdot ~ \!)$ with a distinguished Rota-Baxter operator $R : A \rightarrow A$. Then we have seen that the map $R$ lifts to a Nijenhuis operator on the semidirect product algebra. In other words, $((A \oplus A, ~ \! \cdot_\ltimes ~ \!), \widetilde{R})$ is a Nijenhuis algebra, where
\begin{align*}
    (a, b) \cdot_\ltimes (c, d) = (a \cdot c ~ \!, ~ \! a \cdot d + b \cdot c) ~~~~ \text{ and } ~~~~ \widetilde{R} (a, b) = (R(b), 0),
\end{align*}
for $(a, b), (c, d) \in A \oplus A$. In \cite{lin} the authors have considered Rota-Baxter bimodules over a Rota-Baxter algebra and obtained some representation theoretic results. Recall that a Rota-Baxter bimodule over the Rota-Baxter algebra $((A, ~ \! \cdot ~ \! ), R)$ is a pair $((M, \triangleright, \triangleleft), R_M)$ consisting of an $A$-bimodule $(M, \triangleright, \triangleleft)$ and a linear map $R_M : M \rightarrow M$ satisfying
\begin{align*}
    R(a) \triangleright R_M (u) = R_M ( R(a) \triangleright u + a \triangleright R_M (u)) ~~~ \text{ and } ~~~ R_M (u) \triangleleft R(a) = R_M ( R_M (u) \triangleleft a + u \triangleleft R(a)),
\end{align*}
for all $a \in A$ and $u \in M$. Then it can be checked that $((M \oplus M,  \triangleright_\ltimes , \triangleleft_\ltimes), \widetilde{R_M} )$ is a Nijenhuis bimodule over the Nijenhuis algebra $((A \oplus A, ~ \! \cdot_\ltimes ~ \!), \widetilde{R})$, where 
\begin{align*}
    (a, b) \triangleright_\ltimes (u, v) := (a \triangleright u ~ \!, ~ ~& \! a \triangleright v + b \triangleright u), \qquad (u, v) \triangleleft_\ltimes (a, b) := (u \triangleleft a ~ \!, ~ \! u \triangleleft b + v \triangleleft a) \\
   & \text{ and } ~~ \widetilde{R_M} (u, v) = (R_M (v), 0),
\end{align*}
for $(a, b) \in A \oplus A$ and $(u, v) \in M \oplus M$. Thus, a Rota-Baxter bimodule can be lifted to a Nijenhuis bimodule.
\end{exam}

Let $((A, ~ \! \cdot ~ \! ), N)$ be a Nijenhuis algebra and $(M, \triangleright, \triangleleft)$ be a given $A$-bimodule. A linear map $\beta : M \rightarrow M$ is said to be {\em admissible} \cite{ma} for the above $A$-bimodule if
\begin{align*}
    \beta (N(a) \triangleright u) + a \triangleright \beta^2 (u) =~& N(a) \triangleright \beta (u) + \beta (a \triangleright \beta (u)),\\
    \beta (u \triangleleft N(a)) + \beta^2 (u) \triangleleft a =~& \beta (u) \triangleleft N(a) + \beta (\beta (u) \triangleleft a),
\end{align*}
for all $a \in A$ and $u \in M$. Admissible maps play a prominent role in studying bialgebra theory for Nijenhuis algebras. A Nijenhuis algebra $((A, ~ \! \cdot ~ \! ), N)$ is said to be {\em admissible} if the map $N$ itself is admissible for the adjoint $A$-bimodule. The following result shows the importance of the admissible property (see \cite{ma}).

\begin{prop}
    Let $((A, ~ \! \cdot ~ \! ), N)$ be a Nijenhuis algebra and $(M, \triangleright, \triangleleft)$ be a given $A$-bimodule. If a linear map $\beta : M \rightarrow M$ is admissible for the above $A$-bimodule then the pair $((M^*, \triangleright_*, \triangleleft_*), \beta^*)$ is a Nijenhuis bimodule over the Nijenhuis algebra $((A, ~ \! \cdot ~ \!), N)$, where
    \begin{align*}
         (a \triangleright_* f) (u) = f (u \triangleleft a) ~~~~ \text{ and } ~~~~ (f \triangleleft_* a) (u) = f (a \triangleright u), \text{ for } a \in A, f \in M^*, u \in M.
    \end{align*}

    In particular, if $((A, ~ \! \cdot ~ \! ), N)$ is an admissible Nijenhuis algebra then $((A^*, \triangleright_\mathrm{coad}, \triangleleft_\mathrm{coad}), N^*)$ is a Nijenhuis bimodule over it, where $(a \triangleright_\mathrm{coad} f) (b) = f (b \cdot a)$ and $(f \triangleleft_\mathrm{coad} a) (b) = f (a \cdot b)$, for $a, b \in A$, $f \in A^*$.
\end{prop}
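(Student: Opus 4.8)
The plan is to verify, in turn, the two ingredients in the definition of a Nijenhuis bimodule: first that $(M^*, \triangleright_*, \triangleleft_*)$ is genuinely an $A$-bimodule, and then that $\beta^*$ satisfies the two compatibility identities with respect to $N$. Both are direct dualization computations, carried out by pairing each proposed identity against an arbitrary element $u \in M$ and unfolding the definitions of $\triangleright_*$, $\triangleleft_*$ and $\beta^*$, so I will only indicate the mechanism rather than grind through every line.

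For the bimodule axioms, one checks each of the three defining identities for $(M^*, \triangleright_*, \triangleleft_*)$ by evaluating on $u$. For instance, $\big((a \cdot b) \triangleright_* f\big)(u) = f(u \triangleleft (a \cdot b)) = f\big((u \triangleleft a) \triangleleft b\big) = \big(a \triangleright_* (b \triangleright_* f)\big)(u)$, where the middle step is the right-module axiom for $M$; the mixed axiom and the right-action axiom dualize in exactly the same fashion from the corresponding axioms for $(M, \triangleright, \triangleleft)$. Thus $(M^*, \triangleright_*, \triangleleft_*)$ is the usual dual bimodule, and this stage is immediate.

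The crux is the second stage, and the key observation is that each Nijenhuis bimodule identity for $\beta^*$ dualizes to \emph{exactly one} admissibility identity for $\beta$, but with the left and right roles interchanged. Concretely, I would evaluate the first required identity
\[
N(a) \triangleright_* \beta^*(f) = \beta^*\big(N(a) \triangleright_* f + a \triangleright_* \beta^*(f) - \beta^*(a \triangleright_* f)\big)
\]
on an arbitrary $u$. The left-hand side becomes $f\big(\beta(u \triangleleft N(a))\big)$, while the right-hand side unfolds to $f\big(\beta(u) \triangleleft N(a) + \beta(\beta(u) \triangleleft a) - \beta^2(u) \triangleleft a\big)$; equating the arguments of $f$ for all $u$ is precisely the \emph{second} admissibility identity $\beta(u \triangleleft N(a)) + \beta^2(u) \triangleleft a = \beta(u) \triangleleft N(a) + \beta(\beta(u) \triangleleft a)$. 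Symmetrically, evaluating the second Nijenhuis bimodule identity, the one for $\beta^*(f) \triangleleft_* N(a)$, on $u$ reduces after the same bookkeeping to the \emph{first} admissibility identity $\beta(N(a) \triangleright u) + a \triangleright \beta^2(u) = N(a) \triangleright \beta(u) + \beta(a \triangleright \beta(u))$. This establishes that $\big((M^*, \triangleright_*, \triangleleft_*), \beta^*\big)$ is a Nijenhuis bimodule.

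The final assertion is then a specialization: take $M = A$ with the adjoint bimodule ($a \triangleright u = a \cdot u$ and $u \triangleleft a = u \cdot a$) and $\beta = N$. By definition, $((A, ~ \! \cdot ~ \!), N)$ being admissible means exactly that $N$ is admissible for the adjoint bimodule, so the main part applies. A one-line check then identifies the dual actions with the coadjoint ones, namely $(a \triangleright_* f)(b) = f(b \cdot a) = (a \triangleright_\mathrm{coad} f)(b)$ and $(f \triangleleft_* a)(b) = f(a \cdot b) = (f \triangleleft_\mathrm{coad} a)(b)$, while $\beta^* = N^*$; the claim follows at once. There is no genuine obstacle beyond careful bookkeeping: the only point requiring attention is tracking the left--right interchange, so that the left-hand Nijenhuis identity on $M^*$ is matched against the right-hand admissibility identity on $M$, and conversely.
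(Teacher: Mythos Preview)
Your proof is correct. The paper does not give its own proof of this proposition; it simply attributes the result to the reference \cite{ma}, so there is nothing to compare against beyond noting that your direct dualization argument is exactly the standard verification one would expect, and your bookkeeping of the left--right interchange (first Nijenhuis identity on $M^*$ $\leftrightarrow$ second admissibility identity on $M$, and vice versa) is accurate.
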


\begin{prop}\label{prop-semid}
    Let $((A, ~ \! \cdot ~ \! ), N)$ be a Nijenhuis algebra and $((M, \triangleright, \triangleleft), N_M)$ be a Nijenhuis bimodule over it. Then the pair $((A \oplus M, ~ \! \cdot_\ltimes ~ \! ), N \oplus N_M )$ is also a Nijenhuis algebra, where $\cdot_\ltimes$ is the semidirect product given in (\ref{semid}). Moreover, the inclusion map $A \hookrightarrow A \oplus M,~ a \mapsto (a, 0)$ is a homomorphism of Nijenhuis algebras from $((A, ~ \! \cdot ~ \! ), N)$ to the semidirect product $((A \oplus M, ~ \! \cdot_\ltimes ~ \! ), N \oplus N_M )$.
\end{prop}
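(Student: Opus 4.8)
The plan is to note that the associative part of the statement is already settled and then to verify directly that $N \oplus N_M$ is a Nijenhuis operator on the semidirect product. Indeed, $(A \oplus M, ~ \! \cdot_\ltimes ~ \!)$ is an associative algebra by the semidirect product construction recalled in Example \ref{exam-rrb}, so it only remains to check that the linear map $N \oplus N_M$, given by $(N \oplus N_M)(a, u) = (N(a), N_M(u))$, satisfies the Nijenhuis identity. First I would take arbitrary elements $(a, u), (b, v) \in A \oplus M$ and expand both sides of the defining equation
\begin{align*}
(N \oplus N_M)(a,u) \cdot_\ltimes (N \oplus N_M)(b,v)
&= (N \oplus N_M)\Big( (N \oplus N_M)(a,u) \cdot_\ltimes (b,v) + (a,u) \cdot_\ltimes (N \oplus N_M)(b,v) \\
&\quad - (N \oplus N_M)\big((a,u) \cdot_\ltimes (b,v)\big) \Big),
\end{align*}
using the formula (\ref{semid}) for $\cdot_\ltimes$, and compare the two sides componentwise in $A$ and in $M$.

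The $A$-component of this comparison reduces exactly to the identity $N(a) \cdot N(b) = N\big(N(a) \cdot b + a \cdot N(b) - N(a \cdot b)\big)$, which holds because $N$ is a Nijenhuis operator on $(A, ~ \! \cdot ~ \!)$. The $M$-component is the crux: after using the linearity of $N_M$ to distribute it over the sum, the resulting expression separates into a part built only from $\triangleright$-actions and a part built only from $\triangleleft$-actions. I would then recognize the $\triangleright$-part as the right-hand side of the first Nijenhuis bimodule axiom (so that it collapses to $N(a) \triangleright N_M(v)$) and the $\triangleleft$-part as the right-hand side of the second axiom (so that it collapses to $N_M(u) \triangleleft N(b)$). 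Their sum is precisely the $M$-component of the left-hand side, which establishes the Nijenhuis identity for $N \oplus N_M$.

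The hard part is purely bookkeeping in the $M$-component: one must group the six terms obtained after distributing $N_M$ so that the two bimodule axioms apply with nothing left over, keeping careful track of which terms carry $\triangleright$ and which carry $\triangleleft$. No genuine difficulty arises beyond this organization. Finally, for the inclusion $\iota : A \hookrightarrow A \oplus M$, $a \mapsto (a, 0)$, I would check that $\iota(a) \cdot_\ltimes \iota(b) = (a \cdot b, ~ \! a \triangleright 0 + 0 \triangleleft b) = (a \cdot b, 0) = \iota(a \cdot b)$, so $\iota$ is an associative algebra morphism, and that $(N \oplus N_M)(\iota(a)) = (N(a), N_M(0)) = (N(a), 0) = \iota(N(a))$, so $\iota$ intertwines the two Nijenhuis operators. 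Hence $\iota$ is a homomorphism of Nijenhuis algebras, completing the proof.
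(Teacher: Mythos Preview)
Your proposal is correct. The paper states this proposition without proof (treating it as a routine verification), and your direct componentwise check of the Nijenhuis identity on the semidirect product---reducing the $A$-component to the Nijenhuis condition for $N$ and splitting the $M$-component into the two Nijenhuis bimodule axioms---is exactly the natural argument one would supply.
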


In the next, we show that a Nijenhuis bimodule over a Nijenhuis algebra induces a bimodule over the deformed associative algebra. The explicit result is given by the following whose proof is straightforward.

\begin{prop}
    Let $((A, ~ \! \cdot ~ \!), N)$ be a Nijenhuis algebra and $((M , \triangleright, \triangleleft), N_M)$ be a Nijenhuis bimodule. Define bilinear maps $\triangleright_{N, N_M} : A \times M \rightarrow M$ and $\triangleleft_{N, N_M} : M \times A \rightarrow M$ by
    \begin{align*}
        a \triangleright_{N, N_M} u := N (a) \triangleright u + a \triangleright N_M (u) - N_M (a \triangleright u) ~~~~ \text{ and } ~~~~ u \triangleleft_{N, N_M} a := N_M (u) \triangleleft a + u \triangleleft N(a) - N_M (u \triangleleft a),
    \end{align*}
    for $a \in A$ and $u \in M$. Then $(M, \triangleright_{N, N_M}, \triangleleft_{N, N_M})$ is a bimodule of the deformed associative algebra $A_N  =  (A, ~ \! \cdot_N  ).$ More generally, for any $k \geq 0$, the triple $(M, \triangleright_{N^k, N_M^k} , \triangleleft_{N^k, N_M^k})$ is a bimodule of the algebra $A_{N^k} = (A, ~ \! \cdot_{N^k} ).$ 
\end{prop}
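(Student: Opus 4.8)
The plan is to avoid a direct (and rather tedious) verification of the three bimodule axioms for $(M, \triangleright_{N,N_M}, \triangleleft_{N,N_M})$ over $A_N$, and instead to deduce them all at once from the semidirect product construction together with Proposition \ref{prop-semid}. The guiding principle is that the deformation of a semidirect product is again a semidirect product, whose base is the deformed algebra and whose fibre is the deformed action.

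First I would invoke Proposition \ref{prop-semid} to record that $((A \oplus M, \cdot_\ltimes), N \oplus N_M)$ is a Nijenhuis algebra. Consequently, by \eqref{deformed-alg}, the underlying vector space $A \oplus M$ carries the deformed associative multiplication $(\cdot_\ltimes)_{N \oplus N_M}$ attached to the Nijenhuis operator $N \oplus N_M$. The key computation is to expand this deformed product explicitly. Writing $(N \oplus N_M)(a,u) = (N(a), N_M(u))$ and using the semidirect formula \eqref{semid}, one finds after regrouping the $M$-component into the two expressions $N(a) \triangleright v + a \triangleright N_M(v) - N_M(a \triangleright v)$ and $N_M(u) \triangleleft b + u \triangleleft N(b) - N_M(u \triangleleft b)$ that
\[
(a,u) \, (\cdot_\ltimes)_{N \oplus N_M} \, (b,v) = \big( a \cdot_N b, ~ a \triangleright_{N,N_M} v + u \triangleleft_{N,N_M} b \big).
\]
In other words, the first component reproduces the deformed multiplication $\cdot_N$ on $A$, while the second component is exactly the prescribed pair of deformed actions. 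Thus $(\cdot_\ltimes)_{N \oplus N_M}$ is precisely the semidirect product multiplication on $A_N \oplus M$ associated with the data $(\triangleright_{N,N_M}, \triangleleft_{N,N_M})$.

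It then remains to read off the bimodule axioms. Since $(\cdot_\ltimes)_{N \oplus N_M}$ is associative (being a deformed associative algebra), I would invoke the converse of the semidirect product correspondence recalled in Example \ref{exam-rrb}: a multiplication of the form $(a,u)(b,v) = (a \cdot b, a \triangleright v + u \triangleleft b)$ on $A \oplus M$ is associative if and only if $(M, \triangleright, \triangleleft)$ is an $A$-bimodule. Concretely, substituting $(a,0),(b,0),(0,w)$, then $(a,0),(0,v),(c,0)$, then $(0,u),(b,0),(c,0)$ into the associativity relation for $(\cdot_\ltimes)_{N \oplus N_M}$ and comparing $M$-components isolates, respectively, the three defining bimodule identities for $(M, \triangleright_{N,N_M}, \triangleleft_{N,N_M})$ over $A_N$. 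This establishes the first assertion.

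Finally, for the general statement I would reduce to the case just treated. By Example \ref{kth-power}, $((M, \triangleright, \triangleleft), N_M^k)$ is a Nijenhuis bimodule over the Nijenhuis algebra $((A, \cdot), N^k)$, and $A_{N^k}$ is by definition the deformation of $(A, \cdot)$ by the Nijenhuis operator $N^k$. Applying the first part with $N$ and $N_M$ replaced by $N^k$ and $N_M^k$ then immediately yields that $(M, \triangleright_{N^k, N_M^k}, \triangleleft_{N^k, N_M^k})$ is a bimodule over $A_{N^k}$. The only real point requiring care is the bookkeeping in the central computation above; once the deformed semidirect product is seen to split in the displayed form, no genuine obstacle remains.
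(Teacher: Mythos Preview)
Your argument is correct. The paper itself does not actually prove this proposition; it simply states that the proof ``is straightforward'', presumably meaning a direct expansion of each of the three bimodule identities for $(M,\triangleright_{N,N_M},\triangleleft_{N,N_M})$ over $A_N$.

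Your route via the semidirect product is a genuinely different and cleaner strategy. By Proposition~\ref{prop-semid} the pair $((A\oplus M,\cdot_\ltimes),N\oplus N_M)$ is a Nijenhuis algebra, so its deformed multiplication is associative; your key observation is that this deformed product is again of semidirect-product shape, with base $A_N$ and actions $\triangleright_{N,N_M},\triangleleft_{N,N_M}$. The bimodule axioms then follow at once from associativity by testing on the triples $(a,0),(b,0),(0,w)$, etc. This packages all three identities into a single associativity statement rather than three separate computations, and it makes transparent why the deformation of a bimodule is again a bimodule. The only minor point: Example~\ref{exam-rrb} records the forward direction of the semidirect-product/bimodule correspondence, not the converse, but the converse is exactly the substitution argument you outline, so nothing is missing. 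The reduction of the general $k\geq 0$ case to the first assertion via Example~\ref{kth-power} is also fine.
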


%Note that the second part of the above proposition follows from the first part applying on Example \ref{kth-power}. 

We will now define the cohomology of a Nijenhuis algebra with coefficients in a Nijenhuis bimodule. Let $((A, ~ \! \cdot ~ \! ), N)$ be a Nijenhuis algebra and $((M, \triangleright, \triangleleft), N_M)$ be a given Nijenhuis bimodule. At first, since $(M, \triangleright, \triangleleft)$ is an $A$-bimodule, we may consider the Hochschild cochain complex $\{ \oplus_{n=0}^\infty C^n_\mathrm{Hoch} (A; M) , \delta_\mathrm{Hoch} \}$ of the associative algebra $(A, ~ \! \cdot ~ \!)$ with coefficients in the bimodule $(M, \triangleright, \triangleleft)$. More precisely, we have $C^n_\mathrm{Hoch} (A;M) := \mathrm{Hom} (A^{\otimes n}, M)$ and $\delta_\mathrm{Hoch} : C^n_\mathrm{Hoch} (A;M) \rightarrow C^{n+1}_\mathrm{Hoch} (A;M)$ is given by
\begin{align*}
    (\delta_\mathrm{Hoch} f) (a_1, \ldots, a_{n+1}) =~& a_1 \triangleright f (a_2, \ldots, a_{n+1}) + \sum_{i=1}^n (-1)^i ~ \! f (a_1, \ldots, a_i \cdot a_{i+1}, \ldots, a_{n+1}) \\
   ~& + (-1)^{n+1} ~ \! f (a_1, \ldots, a_n) \triangleleft a_{n+1},
\end{align*}
for $f \in C^n_\mathrm{Hoch} (A; M)$ and $a_1, \ldots, a_{n+1} \in A$. We denote the corresponding Hochschild cohomology groups by $H^\bullet_\mathrm{Hoch} (A; M)$. On the other hand, for each $n \geq 0$, we first set
\begin{align*}
    \qquad C^n (N; N_M) := \mathrm{Hom} (A^{\otimes n}, M) ~~ \text{ and define a map }~ d_{N, N_M} : C^n (N; N_M) \rightarrow C^{n+1} (N; N_M) \text{ by }
   \end{align*}
   \begin{align}
   & \qquad \qquad \quad d_{N, N_M} (u) (a) := N(a) \triangleright u - u \triangleleft N(a) - N_M (a \triangleright u - u \triangleleft a), \label{dnnm}\\
   \medskip &(d_{N, N_M } f) (a_1, \ldots, a_{n+1}) = N (a_1) \triangleright f (a_2, \ldots, a_{n+1}) - (-1)^n ~ \! f (a_1, \ldots, a_n) \triangleleft N (a_{n+1}) \label{dnnm-2} \\
   & \quad + \sum_{i=1}^n (-1)^{i} ~ \! f (a_1, \ldots, a_{i-1}, N (a_i) \cdot a_{i+1} + a_i \cdot N (a_{i+1}) - N (a_i \cdot a_{i+1}), \ldots, a_{n+1}) \nonumber  \\
   & \quad - N_M \big(  a_1 \triangleright f (a_2, \ldots, a_{n+1}) + \sum_{i=1}^n (-1)^i ~ \! f (a_1, \ldots, a_i \cdot a_{i+1}, \ldots, a_{n+1}) + (-1)^{n+1} f (a_1, \ldots, a_n) \triangleleft a_{n+1}  \big), \nonumber
\end{align}
for $u \in M$, $f \in C^{n \geq 1} (N; N_M)$ and $a, a_1, \ldots, a_{n+1} \in A$. Then we have the following.

\begin{prop}
    The map $d_{N, N_M}$ is a differential. In other words, $\{ \oplus_{n=0}^\infty C^n (N; N_M), d_{N, N_M} \}$ is a cochain complex.
\end{prop}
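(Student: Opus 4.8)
The plan is to deduce the identity $(d_{N,N_M})^2 = 0$ from the corresponding fact for a genuine Nijenhuis operator, thereby avoiding a direct and lengthy double-sum manipulation. The crucial input is Proposition \ref{prop-semid}, which tells us that the semidirect product $((A \oplus M, ~ \! \cdot_\ltimes ~ \!), \widetilde{N} := N \oplus N_M)$ is itself a Nijenhuis algebra. Since $\widetilde{N}$ is then a Nijenhuis operator on the associative algebra $(A \oplus M, ~ \! \cdot_\ltimes ~ \!)$, we already know from the discussion following (\ref{dn-map2}) that its associated coboundary map $d_{\widetilde{N}}$ satisfies $(d_{\widetilde{N}})^2 = 0$. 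The strategy is to realize $\{ \oplus_{n=0}^\infty C^n(N; N_M), d_{N,N_M} \}$ as a subcomplex of the cochain complex $\{ \oplus_{n=0}^\infty \mathrm{Hom}((A \oplus M)^{\otimes n}, A \oplus M), d_{\widetilde{N}} \}$ of the operator $\widetilde{N}$, exactly as was done for relative Rota-Baxter operators in the earlier Remark.

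Concretely, for each $n \geq 0$ I would define the embedding $\iota_n : \mathrm{Hom}(A^{\otimes n}, M) \hookrightarrow \mathrm{Hom}((A \oplus M)^{\otimes n}, A \oplus M)$ by $\iota_n(f)((a_1, u_1), \ldots, (a_n, u_n)) := (0, f(a_1, \ldots, a_n))$, so that $\iota_n(f)$ reads off only the $A$-components of its inputs and lands in the $M$-summand; for $n = 0$ this is just the inclusion $M \hookrightarrow A \oplus M$, $u \mapsto (0, u)$. Each $\iota_n$ is manifestly injective, so it suffices to prove that the family $\{ \iota_n \}$ is a chain map, i.e. $d_{\widetilde{N}} \circ \iota_n = \iota_{n+1} \circ d_{N, N_M}$. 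This makes the image of $\iota$ a subcomplex, and then $(d_{N,N_M})^2 = \iota_{n+2}^{-1} \circ (d_{\widetilde{N}})^2 \circ \iota_n = 0$ on that image, which is all we need.

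The heart of the argument is this intertwining identity, which I would verify by substituting $\iota_n(f)$ into formula (\ref{dn-map1}) applied to the operator $\widetilde{N}$ and unwinding the semidirect product (\ref{semid}). The key simplifications are that products of the form $(N(a), N_M(u)) \cdot_\ltimes (0, f(\ldots))$ collapse to $(0, N(a) \triangleright f(\ldots))$ and symmetrically on the right; that $\iota_n(f)$ only sees the $A$-component $a_i \cdot_N a_{i+1}$ of each deformed inner product; and that $\widetilde{N}$ restricted to the $M$-summand acts simply as $N_M$. Term by term these reproduce precisely the four groups of summands in (\ref{dnnm-2}), while the degree-zero case follows in the same way from (\ref{dn-map2}), matching (\ref{dnnm}). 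The only genuine labor is this bookkeeping of semidirect components, which stays routine because the $A$-component of every output vanishes identically; once the chain-map identity is in hand, injectivity of the $\iota_n$ together with $(d_{\widetilde{N}})^2 = 0$ forces $(d_{N,N_M})^2 = 0$, completing the proof.
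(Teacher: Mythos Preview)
Your proposal is correct and follows essentially the same approach as the paper: both invoke Proposition~\ref{prop-semid} to pass to the semidirect product Nijenhuis algebra $((A \oplus M, \cdot_\ltimes), N \oplus N_M)$, identify $\mathrm{Hom}(A^{\otimes n}, M)$ as a subcomplex of the Nijenhuis-operator complex for $N \oplus N_M$, and deduce $(d_{N,N_M})^2 = 0$ from the known $(d_{N \oplus N_M})^2 = 0$. Your version is slightly more explicit in naming the embedding $\iota_n$ and framing the verification as a chain-map identity, but this is exactly what the paper means by ``the restriction of $d_{N \oplus N_M}$ on the space $\mathrm{Hom}(A^{\otimes n}, M)$ coincides with the map $d_{N,N_M}$.''
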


\begin{proof}
    Since $((A, ~ \! \cdot ~ \!), N)$ is a Nijenhuis algebra and $((M, \triangleright, \triangleleft), N_M)$ is a Nijenhuis bimodule, it follows from Proposition \ref{prop-semid} that $((A \oplus M, ~ \! \cdot_\ltimes ~ \!), N \oplus N_M)$ is a Nijenhuis algebra. Hence one may define the cochain complex $\{  \oplus_{n=0}^\infty \mathrm{Hom} ((A \oplus M)^{\otimes n}, A \oplus M ), d_{N \oplus N_M} \}$ associated to the Nijenhuis operator $N \oplus N_M$ on the semidirect product associative algebra $(A \oplus M , ~ \! \cdot_\ltimes ~ \!)$. Then it is straightforward to verify that $d_{N \oplus N_M} \big(  \mathrm{Hom} (A^{\otimes n}, M)   \big) \subset  \mathrm{Hom} (A^{\otimes n+1}, M) $, for all $n$. Moreover, the restriction of $d_{N \oplus N_M}$ on the space $ \mathrm{Hom} (A^{\otimes n}, M) $ coincides with the map $d_{N, N_M}$ given in (\ref{dnnm}), (\ref{dnnm-2}). Hence we get that $(d_{N, N_M})^2 = 0$.
\end{proof}

The cochain complex $\{ \oplus_{n=0}^\infty C^n (N; N_M), d_{N, N_M} \}$ obtained in the above proposition can be regarded as the cochain complex associated with the Nijenhuis operator $N$ relative to the operator $N_M$. We denote the corresponding cohomology groups by $H^\bullet (N; N_M)$. Note that when $ ((M, \triangleright, \triangleleft), N_M) = ((A, \triangleright_\mathrm{ad},  \triangleleft_\mathrm{ad}), N)$ is the adjoint Nijenhuis bimodule, the above cochain complex (or the cohomology) coincides with the complex (or the cohomology) associated to the Nijenhuis operator $N$.

\medskip

For each $n \geq 0$, we now define a map $\partial^{N, N_M} : C^n_\mathrm{Hoch} (A; M) \rightarrow C^n (N; N_M)$ by
\begin{align}
&\qquad \qquad  \partial^{N, N_M} (u) = u, \text{ for } u \in C^0_\mathrm{Hoch} (A; M) = M, \nonumber \\
    \partial^{N, N_M} (f) &(a_1, \ldots, a_n) = f \big(  N (a_1), \ldots, N (a_n)  \big) - \sum_{i=1}^n N_M \big( f \big(  N (a_1), \ldots, a_i, \ldots, N (a_n)   \big)    \big) \\
    &+ \sum_{1 \leq i < j \leq n} N_M^2 \big( f \big(  N (a_1), \ldots, a_i, \ldots, a_j, \ldots, N (a_n)   \big)    \big) - \cdots \cdot + (-1)^n ~ \! N_M^n \big( f (a_1, \ldots, a_n)   \big), \nonumber
\end{align}
for $f \in C^{n \geq 1}_\mathrm{Hoch} (A; M) = \mathrm{Hom}(A^{\otimes n}, M)$ and $a_1, \ldots, a_n \in A$. Then we have the following result whose proof consists of only lengthy calculations.

\begin{prop}\label{prop-mapp}
    For any $f \in  C^n_\mathrm{Hoch} (A; M)$, $n \geq 0$, we have $(d_{N, N_M} \circ \partial^{N, N_M}) (f) = (\partial^{N, N_M} \circ \delta_\mathrm{Hoch})(f)$.
\end{prop}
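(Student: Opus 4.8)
The statement to prove is that $\partial^{N,N_M}$ is a morphism of cochain complexes, i.e. $d_{N,N_M}\circ\partial^{N,N_M}=\partial^{N,N_M}\circ\delta_\mathrm{Hoch}$ on $C^n_\mathrm{Hoch}(A;M)$ for every $n\ge 0$. The plan is to avoid carrying the two actions $\triangleright,\triangleleft$ and the two maps $N,N_M$ separately, and instead to reduce the identity to the single-operator situation by the same semidirect-product device already used to show that $d_{N,N_M}$ squares to zero. Write $B:=(A\oplus M,\cdot_\ltimes)$ and $\mathcal{N}:=N\oplus N_M$, so that $(B,\mathcal{N})$ is a Nijenhuis algebra by Proposition~\ref{prop-semid}. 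Under the inclusion $\mathrm{Hom}(A^{\otimes n},M)\hookrightarrow\mathrm{Hom}(B^{\otimes n},B)$ sending $g$ to the map that reads off the $A$-components and lands in the $M$-summand, the proof of the preceding proposition already records that $d_{\mathcal{N}}$ restricts to $d_{N,N_M}$. One checks directly that $\delta_\mathrm{Hoch}$ with coefficients in $M$ is the restriction of the adjoint Hochschild differential of $B$, and that the comparison map $\partial^{\mathcal{N}}$ for $(B,\mathcal{N})$ restricts to $\partial^{N,N_M}$, using only $\mathcal{N}(a,0)=(N(a),0)$ and $\mathcal{N}^k(0,m)=(0,N_M^k m)$. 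Hence it suffices to prove, for an arbitrary Nijenhuis operator $\mathcal{N}$ on an associative algebra $(B,\cdot)$ with adjoint coefficients, that $d_{\mathcal{N}}\circ\partial^{\mathcal{N}}=\partial^{\mathcal{N}}\circ\delta_\mathrm{Hoch}$, where
\[ \partial^{\mathcal{N}}(F)(x_1,\dots,x_n)=\sum_{S\subseteq\{1,\dots,n\}}(-1)^{|S|}\,\mathcal{N}^{|S|}\big(F(x_1^S,\dots,x_n^S)\big), \]
with $x_i^S=x_i$ for $i\in S$ and $x_i^S=\mathcal{N}(x_i)$ otherwise.

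For the single-operator identity I would first rewrite $d_{\mathcal{N}}$ in a form adapted to the deformed product. Since $\mathcal{N}$ is a Nijenhuis operator, the map $\mathcal{N}:(B,\cdot_{\mathcal{N}})\to(B,\cdot)$ is an algebra homomorphism, i.e. $\mathcal{N}(x)\cdot\mathcal{N}(y)=\mathcal{N}(x\cdot_{\mathcal{N}}y)$, and the triple $\big(B,\,x\mapsto\mathcal{N}(x)\cdot-,\ -\cdot\mathcal{N}(x)\big)$ is a bimodule over $(B,\cdot_{\mathcal{N}})$. Reading off the terms of (\ref{dn-map1}) then shows that $d_{\mathcal{N}}=\delta'-\mathcal{N}\circ\delta_\mathrm{Hoch}$, where $\delta'$ is the Hochschild differential of the deformed algebra $(B,\cdot_{\mathcal{N}})$ with coefficients in that twisted bimodule. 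This splits the target identity into matching $\delta'(\partial^{\mathcal{N}}F)$ and $\mathcal{N}\,\delta_\mathrm{Hoch}(\partial^{\mathcal{N}}F)$ against $\partial^{\mathcal{N}}(\delta_\mathrm{Hoch}F)$, and it isolates the roles of the deformed product (inside $\delta'$) and of the outer factor $\mathcal{N}$.

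The essential content is that both sides perform one ``insertion/merge'' step and one ``decoration'' step, but in the opposite order: on the left we decorate $F$ first (via $\partial^{\mathcal{N}}$) and then differentiate, whereas on the right we differentiate first and decorate afterwards. I would therefore organize the computation as commuting the decoration past the differential, position by position, tracking each subset $S$. The only genuine discrepancy arises at a multiplied pair: when the merged slot of $\delta'(\partial^{\mathcal{N}}F)$ carries a factor $\mathcal{N}$, the Nijenhuis relation turns $\mathcal{N}(x_j\cdot_{\mathcal{N}}x_{j+1})$ into $\mathcal{N}(x_j)\cdot\mathcal{N}(x_{j+1})$, reconciling ``both inputs decorated'' with the output power of $\mathcal{N}$; when it is undecorated, the splitting $x_j\cdot_{\mathcal{N}}x_{j+1}=\mathcal{N}(x_j)\cdot x_{j+1}+x_j\cdot\mathcal{N}(x_{j+1})-\mathcal{N}(x_j\cdot x_{j+1})$ produces exactly the three patterns appearing on the right, with the $-\mathcal{N}(x_j\cdot x_{j+1})$ piece being absorbed by the $-\mathcal{N}\circ\delta_\mathrm{Hoch}$ summand of $d_{\mathcal{N}}$. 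The two boundary insertions of $d_{\mathcal{N}}$ are matched in the same way against the $j=0$ and $j=n+1$ terms of $\delta_\mathrm{Hoch}F$. I expect the main obstacle to be purely combinatorial: checking that after these substitutions every monomial of $\partial^{\mathcal{N}}(\delta_\mathrm{Hoch}F)$ is produced exactly once, with the correct sign $(-1)^{|S|}$ and the correct exponent of $\mathcal{N}$, and that all auxiliary terms cancel in pairs. To keep this bookkeeping manageable I would run an induction on $n$, peeling off the first argument so that the insertion-at-position-$1$ contributions reduce to the $(n-1)$-case, which is the route I would pursue to finish.
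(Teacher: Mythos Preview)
The paper itself gives no proof of this proposition; it simply states that ``the proof consists of only lengthy calculations.'' Your proposal therefore cannot be compared to an existing argument, but it can be judged on its own terms, and on those terms it is sound.

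Your reduction to the adjoint case via the semidirect product $(B,\mathcal{N})=(A\oplus M,\,N\oplus N_M)$ is correct and mirrors the trick the paper already uses to show $(d_{N,N_M})^2=0$: the inclusion $\mathrm{Hom}(A^{\otimes n},M)\hookrightarrow\mathrm{Hom}(B^{\otimes n},B)$ does intertwine $\delta_\mathrm{Hoch}^{A;M}$ with the adjoint $\delta_\mathrm{Hoch}^B$, and $\partial^{N,N_M}$ with $\partial^{\mathcal N}$, for the reasons you give ($\mathcal N$ preserves each summand and $\tilde f$ reads only $A$-components and lands in $M$). The decomposition $d_{\mathcal N}=\delta'-\mathcal N\circ\delta_\mathrm{Hoch}$, with $\delta'$ the Hochschild differential of $(B,\cdot_{\mathcal N})$ with coefficients in the bimodule $(B,\,\mathcal N(-)\cdot,\,\cdot\mathcal N(-))$, is read off correctly from the explicit formula for $d_N$, and the bimodule axioms follow from $\mathcal N(x\cdot_{\mathcal N}y)=\mathcal N(x)\cdot\mathcal N(y)$. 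Your description of how the two sides match---the decorated merged slot using $\mathcal N(x\cdot_{\mathcal N}y)=\mathcal N x\cdot\mathcal N y$, the undecorated slot splitting via the definition of $\cdot_{\mathcal N}$, and the $-\mathcal N(x\cdot y)$ piece being absorbed by $-\mathcal N\circ\delta_\mathrm{Hoch}$---is exactly what happens when one runs the calculation, as a check in low degrees confirms.

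What you have is a correct and well-organised \emph{plan} rather than a finished proof: the subset-by-subset bookkeeping and the induction on $n$ are announced but not executed. That said, all the structural observations needed to carry it out are present and accurate, and your outline is considerably more informative than the paper's omitted argument. If you were to complete it, the induction you sketch (peeling off the first argument) would work, though a direct term-matching over subsets $S\subseteq\{1,\dots,n+1\}$ is equally feasible once the two uses of the Nijenhuis identity you identified are in hand.
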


It follows from the above proposition that the collection $\{ \partial^{N, N_M} : C^n_\mathrm{Hoch} (A; M) \rightarrow C^n (N; N_M) \}_{n \geq 0}$ defines a homomorphism of cochain complexes from the Hochschild complex $\{ \oplus_{n=0}^\infty C^n_\mathrm{Hoch} (A;M), \delta_\mathrm{Hoch} \}$ to the cochain complex $\{ \oplus_{n=0}^\infty C^n(N; N_M), d_{N, N_M} \}$. In the following, we consider the mapping cone induced by this homomorphism. More precisely, we let
\begin{align*}
    C^n_{\mathrm{NAlg}^0} ((A, N); (M, N_M)) := \begin{cases}
         C^0_\mathrm{Hoch} (A; M) = M & \text{ if } n =0, \vspace*{0.1cm} \\
         C^1_\mathrm{Hoch} (A; M) \oplus C^0 (N; N_M) = \mathrm{Hom} (A, M) \oplus M  & \text{ if } n =1, \vspace*{0.1cm} \\
    C^n_\mathrm{Hoch} (A;M) \oplus C^{n-1}(N; N_M)      & \text{ if } n \geq 2 \\
    \quad = \mathrm{Hom} (A^{\otimes n}, M) \oplus \mathrm{Hom} (A^{\otimes n-1}, M)
    \end{cases}
\end{align*}
and define a map $\delta_{\mathrm{NAlg}^0} : C^n_{\mathrm{NAlg}^0} ((A, N); (M, N_M)) \longrightarrow C^{n+1}_{\mathrm{NAlg}^0} ((A, N); (M, N_M))$ by
\begin{align*}
  \delta_{\mathrm{NAlg}^0} (u) := ( \delta_\mathrm{Hoch} (u), u ) \quad \text{ and } \quad \delta_{\mathrm{NAlg}^0} (\chi, F) := \big( \delta_\mathrm{Hoch} (\chi) ~ \!, ~ \! d_{N, N_M} (F) + (-1)^n ~ \! \partial^{N, N_M} (\chi) \big),
\end{align*}
for $u \in M$ and $(\chi, F) \in C^n_\mathrm{Hoch} (A; M) \oplus C^{n-1} (N; N_M) = C^{n \geq 1}_{\mathrm{NAlg}^0} ((A, N); (M, N_M))$.

\begin{prop}
    The map $\delta_{\mathrm{NAlg}^0}$ is a differential, i.e. $(\delta_{\mathrm{NAlg}^0})^2 = 0$.
\end{prop}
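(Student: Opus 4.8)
The statement is exactly the assertion that the mapping cone of the cochain map $\partial^{N,N_M}$ is again a cochain complex, so the plan is to run the standard mapping-cone verification. The only inputs I need are already in hand: the Hochschild differential satisfies $\delta_{\mathrm{Hoch}}^2 = 0$; the operator $d_{N,N_M}$ satisfies $(d_{N,N_M})^2 = 0$ by the preceding proposition; and $\partial^{N,N_M}$ is a morphism of cochain complexes, that is $d_{N,N_M}\circ\partial^{N,N_M} = \partial^{N,N_M}\circ\delta_{\mathrm{Hoch}}$, by Proposition~\ref{prop-mapp}. The whole argument is then a matter of tracking the degree-dependent sign $(-1)^n$ through two successive applications of $\delta_{\mathrm{NAlg}^0}$.

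First I would handle a generic degree $n\geq 1$. Take $(\chi,F)\in C^n_{\mathrm{Hoch}}(A;M)\oplus C^{n-1}(N;N_M)$, so that $\delta_{\mathrm{NAlg}^0}(\chi,F) = \big(\delta_{\mathrm{Hoch}}\chi,\, d_{N,N_M}F + (-1)^n\,\partial^{N,N_M}\chi\big)$, an element of degree $n+1$. Applying $\delta_{\mathrm{NAlg}^0}$ once more, the first slot becomes $\delta_{\mathrm{Hoch}}^2\chi = 0$, while the second slot is
\[
d_{N,N_M}\big(d_{N,N_M}F + (-1)^n\partial^{N,N_M}\chi\big) + (-1)^{n+1}\,\partial^{N,N_M}\big(\delta_{\mathrm{Hoch}}\chi\big).
\]
Here the term $(d_{N,N_M})^2 F$ vanishes, and the surviving pieces are $(-1)^n\, d_{N,N_M}\partial^{N,N_M}\chi + (-1)^{n+1}\,\partial^{N,N_M}\delta_{\mathrm{Hoch}}\chi$. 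Substituting the cochain-map identity $d_{N,N_M}\partial^{N,N_M}\chi = \partial^{N,N_M}\delta_{\mathrm{Hoch}}\chi$ shows these two terms are negatives of one another, so the second slot also vanishes and $(\delta_{\mathrm{NAlg}^0})^2(\chi,F)=0$.

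Finally I would check the degree-zero edge case, which is the only place where a little care is needed. For $u\in C^0 = M$ we have $\delta_{\mathrm{NAlg}^0}(u) = (\delta_{\mathrm{Hoch}}u,\,u)$, and applying $\delta_{\mathrm{NAlg}^0}$ again gives first slot $\delta_{\mathrm{Hoch}}^2 u = 0$ and second slot $d_{N,N_M}u - \partial^{N,N_M}(\delta_{\mathrm{Hoch}}u)$. Using the normalization $\partial^{N,N_M}(u) = u$ together with Proposition~\ref{prop-mapp} gives $\partial^{N,N_M}(\delta_{\mathrm{Hoch}}u) = d_{N,N_M}(\partial^{N,N_M}u) = d_{N,N_M}u$, so this slot vanishes as well. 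I do not anticipate any genuine obstacle: the content is entirely the mapping-cone formalism, and the only thing that could go wrong is the sign bookkeeping, which the factor $(-1)^n$ in the definition of $\delta_{\mathrm{NAlg}^0}$ is precisely arranged to reconcile with the cochain-map identity.
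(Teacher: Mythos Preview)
Your proof is correct and follows essentially the same mapping-cone verification as the paper: both split into the degree-zero case and the generic $n\ge 1$ case, invoke $(\delta_{\mathrm{Hoch}})^2=0$, $(d_{N,N_M})^2=0$, and the chain-map identity of Proposition~\ref{prop-mapp} to cancel the cross terms. The only cosmetic difference is that in degree zero the paper verifies $d_{N,N_M}(u)=\partial^{N,N_M}(\delta_{\mathrm{Hoch}}u)$ by a direct computation from (\ref{dnnm}) and the definition of $\partial^{N,N_M}$, whereas you obtain it by applying Proposition~\ref{prop-mapp} together with the normalization $\partial^{N,N_M}(u)=u$.
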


\begin{proof}
  For $u \in M$, we have
  \begin{align*}
  (\delta_{\mathrm{NAlg}^0})^2 (u) = \delta_{\mathrm{NAlg}^0} \big(  \delta_\mathrm{Hoch} (u), u \big) = \big(  (\delta_\mathrm{Hoch})^2 (u) ~ \! , ~ \! d_{N, N_M} (u) - \partial^{N, N_M} \delta_\mathrm{Hoch} (u)  \big) = 0.
\end{align*}    
The last equality follows from (\ref{dnnm}) and definition of $\partial^{N, N_M}$. Similarly, for $(\chi, F) \in C^{n \geq 1}_{\mathrm{NAlg}^0} ((A, N); (M, N_M))$, 
\begin{align*}
(\delta_{\mathrm{NAlg}^0})^2 (\chi, F) =~& \delta_{\mathrm{NAlg}^0} \big(  \delta_\mathrm{Hoch} (\chi) ~ \! , ~ \! d_{N, N_M} (F) + (-1)^n ~ \! \partial^{N, N_M} (\chi)   \big) \\
=~& \big(  \underbrace{(\delta_\mathrm{Hoch})^2 (\chi)}_{ = ~ \! 0 } ~ \! , ~ \! \underbrace{(d_{N, N_M})^2 (F)}_{= ~ \! 0} + \underbrace{ (-1)^n ~ \! d_{N, N_M} \partial^{N, N_M} (\chi) + (-1)^{n+1} ~ \! \partial^{N, N_M} \delta_{\mathrm{Hoch}} (\chi)}_{= ~ \! 0 \text{ by Proposition }\ref{prop-mapp} } \big) = 0.
\end{align*}
Hence the proof follows.
\end{proof}

The above proposition shows that $\{ \oplus_{n=0}^\infty C^n_{\mathrm{NAlg}^0} ((A, N); (M, N_M)), \delta_{\mathrm{NAlg}^0}  \}$ is a cochain complex. 
%The corresponding cohomology groups are called the {\em cohomology groups} of the Nijenhuis algebra $((A, ~ \! \cdot ~ \!), N)$ with coefficients in the Nijenhuis bimodule $((M, \triangleright, \triangleleft), N_M).$
The corresponding cohomology groups are denoted by $H^\bullet_{\mathrm{NAlg}^0} ((A, N); (M, N_M)).$ The next result connects this cohomology with the cohomologies of the underlying associative algebra and the Nijenhuis operator.

\begin{thm}
Let $((A, ~ \! \cdot ~ \! ), N)$ be a Nijenhuis algebra and $((M, \triangleright, \triangleleft), N_M)$ be a Nijenhuis bimodule over it. Then there is a short exact sequence of cochain complexes:
\begin{align*}
0 \rightarrow ~& C^{\bullet - 1} (N; N_M)  \xrightarrow{i} C^\bullet_{\mathrm{NAlg}^0} ((A, N); (M, N_M)) \xrightarrow{p} C^\bullet_\mathrm{Hoch} (A; M) \rightarrow 0\\
& \qquad \qquad \text{ where }  i(F) = (0, F) \text{ and }  p (\chi, F) = \chi
\end{align*}
which yields a long exact sequence in the cohomology:
\begin{align}\label{les}
\cdots \longrightarrow H^{n-1} (N; N_M) \longrightarrow H^n_{\mathrm{NAlg}^0} ((A, N); (M, N_M)) \longrightarrow H^n_\mathrm{Hoch} (A; M) \longrightarrow H^n (N; N_M) \longrightarrow \cdots.
\end{align}
\end{thm}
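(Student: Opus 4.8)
The plan is to recognize the complex $C^\bullet_{\mathrm{NAlg}^0}((A,N);(M,N_M))$ as the mapping cone of the chain map $\partial^{N,N_M}$ furnished by Proposition \ref{prop-mapp}, so that the asserted short exact sequence is the canonical one attached to any mapping cone and the long exact sequence is then produced by the standard zig-zag (snake) lemma. Concretely, I would first check that $i$ and $p$ are morphisms of cochain complexes. For $p$ this is immediate: since $\delta_{\mathrm{NAlg}^0}(\chi, F) = (\delta_\mathrm{Hoch}(\chi),\, d_{N,N_M}(F) + (-1)^n \partial^{N,N_M}(\chi))$ and $\delta_{\mathrm{NAlg}^0}(u) = (\delta_\mathrm{Hoch}(u), u)$, projecting onto the first coordinate gives $p \circ \delta_{\mathrm{NAlg}^0} = \delta_\mathrm{Hoch} \circ p$ in every degree. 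For $i$ one computes $\delta_{\mathrm{NAlg}^0}(i(F)) = \delta_{\mathrm{NAlg}^0}(0, F) = (0,\, d_{N,N_M}(F)) = i(d_{N,N_M}(F))$, so $i$ commutes with the differentials once $C^{\bullet-1}(N;N_M)$ is equipped with the (unshifted) coboundary $d_{N,N_M}$.

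Next I would verify exactness degreewise. Injectivity of $i$ is clear since $i(F) = (0,F)$; surjectivity of $p$ is clear since $p(\chi,0)=\chi$ for $n \geq 1$ and $p = \mathrm{Id}_M$ in degree $0$; and $\ker p = \{(\chi,F) : \chi = 0\} = \{(0,F)\} = \mathrm{im}\, i$. The only care needed is in the low degrees, where $C^n_{\mathrm{NAlg}^0}$ is not simply the direct sum $C^n_\mathrm{Hoch}(A;M) \oplus C^{n-1}(N;N_M)$: in degree $0$ the sequence reads $0 \to 0 \to M \to M \to 0$ (since $C^{-1}(N;N_M)=0$), and in degree $1$ it is $0 \to C^0(N;N_M) \to \mathrm{Hom}(A,M)\oplus M \to \mathrm{Hom}(A,M) \to 0$; both are checked directly from the definition of $C^n_{\mathrm{NAlg}^0}$. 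This yields the short exact sequence of cochain complexes.

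The long exact sequence \eqref{les} is then obtained by applying the zig-zag lemma to this short exact sequence. I would additionally identify the connecting homomorphism $H^n_\mathrm{Hoch}(A;M) \to H^n(N;N_M)$: given a Hochschild cocycle $\chi$, lift it along $p$ to $(\chi, 0)$ and apply $\delta_{\mathrm{NAlg}^0}$, obtaining $(0,\, (-1)^n \partial^{N,N_M}(\chi))$, which lies in $\mathrm{im}\, i$. Hence the connecting map is $(-1)^n$ times the map induced on cohomology by $\partial^{N,N_M}$, and it is well defined precisely because $\partial^{N,N_M}$ is a chain map, as established in Proposition \ref{prop-mapp}.

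The computations are all routine, and I expect no substantial obstacle here: the construction of $C^\bullet_{\mathrm{NAlg}^0}$ is exactly the mapping cone of $\partial^{N,N_M}$, and the result is the standard homological consequence. The only points requiring genuine attention are the $(-1)^n$ sign bookkeeping in the chain-map check for $i$ and in the connecting homomorphism, together with the low-degree edge cases noted above.
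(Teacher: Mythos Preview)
Your proposal is correct and is precisely the standard mapping-cone argument the paper relies on. In fact the paper states this theorem without proof, having already observed that $C^\bullet_{\mathrm{NAlg}^0}((A,N);(M,N_M))$ is the mapping cone of the chain map $\partial^{N,N_M}$ from Proposition~\ref{prop-mapp}; your write-up supplies exactly the routine verifications (chain-map checks for $i$ and $p$, degreewise exactness including the low-degree cases, and the identification of the connecting homomorphism as $(-1)^n\,\partial^{N,N_M}$) that the paper leaves implicit.
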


\medskip

\medskip

In the following, we consider a reduced version of the cochain complex defined above to study deformations and abelian extensions of Nijenhuis algebras. More precisely, let $((A, ~ \! \cdot ~ \!), N)$ be a Nijenhuis algebra and $((M, \triangleright, \triangleleft), N_M)$ be a Nijenhuis bimodule over it. 
We set $C^0_\mathrm{NAlg} ((A, N); (M, N_M)) = 0$,
\begin{align*}
C^1_\mathrm{NAlg} ((A, M); (M, N_M)) = C^1_\mathrm{Hoch} (A ; M) ~~~~ \text{ and } ~~~~ C^{n \geq 2}_\mathrm{NAlg} ((A, N); (M, N_M)) = C^n_\mathrm{Hoch} (A; M) \oplus C^{n-1} (N; N_M).
\end{align*}
Define a map $\delta_\mathrm{NAlg} : C^n_\mathrm{NAlg} ((A, N); (M, N_M)) \longrightarrow C^{n+1}_\mathrm{NAlg} ((A, N); (M, N_M))$ by
\begin{align*}
\delta_\mathrm{NAlg} (f) := (\delta_\mathrm{Hoch} (f) ~ \! , ~ \! - \partial^{N, N_M} (f))     ~~~~ \text{ and } ~~~~ \delta_\mathrm{NAlg} (\chi, F) := ( \delta_\mathrm{Hoch} (\chi) ~ \! , ~ \! d_{N, N_M} (F) + (-1)^n ~ \! \partial^{N, N_M} (\chi)),
\end{align*}
for $f \in C^1_\mathrm{Hoch} (A; M)$ and $(\chi, F) \in C^{n \geq 2}_\mathrm{NAlg} ((A, N); (M, N_M))$.
As before, we get that $(\delta_\mathrm{NAlg})^2 = 0$. The cochain complex $ \{ \oplus_{n=0}^\infty C^n_\mathrm{NAlg} ((A, N); (M, N_M)) , \delta_\mathrm{NAlg} \}$ thus obtained is called the {\bf cochain complex} of the Nijenhuis algebra $((A, ~ \! \cdot ~ \!), N)$ with coefficients in the Nijenhuis bimodule $((M, \triangleright, \triangleleft), N_M)$. We denote the space of $n$-cocycles by $Z^n_\mathrm{NAlg} ((A, N); (M, N_M))$ and the space of $n$-coboundaries by $B^n_\mathrm{NAlg} ((A, N); (M, N_M))$. The corresponding cohomology groups
\begin{align*}
H^n_\mathrm{NAlg} ((A, N); (M, N_M)) := \frac{Z^n_\mathrm{NAlg} ((A, N); (M, N_M))}{B^n_\mathrm{NAlg} ((A, N); (M, N_M))}
\end{align*}
are called the cohomology groups of the Nijenhuis algebra $(A, ~ \! \cdot ~ \!), N)$ with coefficients in the Nijenhuis bimodule $((M, \triangleright, \triangleleft), N_M)$.

\begin{remark}
(i) Note that a pair $(\chi, F) \in \mathrm{Hom} (A^{\otimes 2}, M) \oplus \mathrm{Hom}(A, M) = C^2_\mathrm{NAlg} ((A, N); (M, N_M))$ is a $2$-cocycle if and only if the following conditions are hold:
\begin{align}
   & \qquad a \triangleright \chi (b, c) - \chi (a \cdot b, c) + \chi (a, b \cdot c) - \chi (a, b) \triangleleft c = 0, \label{2co1}\\
     N(a) \triangleright F(b) ~ \! +~& F(a) \triangleleft N(b) - F (N(a) \cdot b + a \cdot N(b) - N(a \cdot b)) - N_M ( F(a) \triangleleft b + a \triangleright F(b) - F (a \cdot b) ) \label{2co2} \\
    & + \chi (N (a) , N(b) ) - N_M \big(  \chi (N(a), b) + \chi (a, N (b)) - \chi (a \cdot b)  \big) = 0, \nonumber
\end{align}
for all $a, b, c \in A$. Further, this is a $2$-coboundary if there exists a linear map $g \in \mathrm{Hom} (A, M)$ such that
\begin{align*}
    \chi (a, b) = a \triangleright g(b) - g (a \cdot b) + g (a) \triangleleft b \quad \text{ and } \quad F (a) = (g N - N_M g)(a), \text{ for } a, b \in A.
\end{align*} 

(ii) It follows from the definitions that $H^n_\mathrm{NAlg} ((A, N); (M, N_M)) = H^n_{\mathrm{NAlg}^0} ((A, N); (M, N_M))$, for all $n \geq 3$. Hence the long exact sequence (\ref{les}) still holds if the cohomology group $H^n_{\mathrm{NAlg}^0} ((A, N); (M, N_M))$ is replaced by $H^n_\mathrm{NAlg} ((A, N); (M, N_M))$ for the above range of $n$.
\end{remark}

\medskip

\noindent {\bf Particular case: The cohomology of a Nijenhuis algebra.} Here we write the cochain complex of the Nijenhuis algebra $((A, ~ \! \cdot ~ \!), N)$ with coefficients in the adjoint Nijenhuis bimodule. More precisely, we set
\begin{align*}
C^0_\mathrm{NAlg} ((A, N)) = 0, ~~~~ C^1_\mathrm{NAlg} ((A, N)) = \mathrm{Hom} (A, A), ~~~~ C^{n \geq 2}_\mathrm{NAlg} ((A, N)) = \mathrm{Hom} (A^{\otimes n }, A ) \oplus \mathrm{Hom }(A^{\otimes n-1}, A)
\end{align*}
and define a map $\delta_\mathrm{NAlg} : C^n_\mathrm{NAlg} ((A, N)) \rightarrow C^{n+1}_\mathrm{NAlg} ((A, N))$ by
\begin{align}\label{diff-nalg}
\delta_\mathrm{NAlg} (f) := (\delta_\mathrm{Hoch} (f) ~ \! , ~ \! - \partial^N (f)), \quad
 \delta_\mathrm{NAlg} (\chi, F) := ( \delta_\mathrm{Hoch} (\chi) ~ \! , ~ \! d_{N} (F) + (-1)^n ~ \! \partial^{N} (\chi)),
\end{align}
for $f \in C^1_\mathrm{NAlg} ((A, N))$ and $(\chi, F) \in C^{n \geq 2}_\mathrm{NAlg} ((A, N))$. Here the map $d_N$ is defined in (\ref{dn-map1}) and the map $\partial^N : \mathrm{Hom} (A^{\otimes n}, A) \rightarrow \mathrm{Hom} (A^{\otimes n}, A)$ is given by
\begin{align*}
\partial^{N} (f) &(a_1, \ldots, a_n) = f \big(  N (a_1), \ldots, N (a_n)  \big) - \sum_{i=1}^n N \big( f \big(  N (a_1), \ldots, a_i, \ldots, N (a_n)   \big)    \big) \\
    &+ \sum_{1 \leq i < j \leq n} N^2 \big( f \big(  N (a_1), \ldots, a_i, \ldots, a_j, \ldots, N (a_n)   \big)    \big) - \cdots \cdot + (-1)^n ~ \! N^n \big( f (a_1, \ldots, a_n)   \big),
\end{align*} 
for $f \in \mathrm{Hom} (A^{\otimes n}, A)$ and $a_1, \ldots, a_{n+1} \in A$. Then it turns out that $\{ \oplus_{n=0}^\infty C^n_\mathrm{NAlg} ((A, N)), \delta_\mathrm{NAlg} \}$ is a cochain complex. The corresponding cohomology groups are denoted by $H^\bullet_\mathrm{NAlg} ((A, N))$.

\begin{remark}
In \cite{mapping} Fiorenza and Manetti have shown that the mapping cone of a homomorphism of differential graded Lie algebras can be canonically equipped with an $L_\infty$-algebra structure which lifts the usual differential of the mapping cone. Since the collection $\{ \partial^N :  \mathrm{Hom} (A^{\otimes n}, A) \rightarrow  \mathrm{Hom} (A^{\otimes n}, A) \}_{n \geq 0}$ defines a homomorphism of cochain complexes from the Hochschild complex $\{ \oplus_{n=0}^\infty  \mathrm{Hom} (A^{\otimes n}, A), \delta_\mathrm{Hoch} \}$ to the cochain complex $\{ \oplus_{n=0}^\infty  \mathrm{Hom} (A^{\otimes n}, A), d_N \}$ of the Nijenhuis operator $N$, it turns out that the graded space $\oplus_{n=0}^\infty C^n_\mathrm{NAlg} ((A, N))$ inherits an $L_\infty$-algebra structure whose differential map is given by (\ref{diff-nalg}). It is important to note that the Hochschild complex and the complex $N$ both carry graded Lie brackets. However, at this point, it is unclear to us how these graded Lie brackets play the role in constructing the higher operations of the $L_\infty$-structure on $\oplus_{n=0}^\infty C^n_\mathrm{NAlg} ((A, N))$. We will trace this question in a subsequent work.
\end{remark}

\medskip

\section{Deformations and abelian extensions of Nijenhuis algebras}\label{sec4}

In this section, we first study (formal and infinitesimal) deformations of Nijenhuis algebras. Among others, we show that the set of all equivalence classes of infinitesimal deformations of a Nijenhuis algebra $( (A, ~ \! \cdot ~ \!), N)$ can be classified by the second cohomology group $H^2_{\mathrm{NAlg}} ((A, N))$. As another application of our cohomology, we discuss abelian extensions of a Nijenhuis algebra by a Nijenhuis bimodule over it. We prove that the set of all isomorphism classes of such abelian extensions has a bijection with the second cohomology group of the Nijenhuis algebra with coefficients in the given Nijenhuis bimodule. 

\subsection{Deformations of Nijenhuis algebras} Let $( (A, ~ \! \cdot ~ \!), N)$ be a Nijenhuis algebra. For convenience, we denote the associative multiplication of $A$ by the map $\mu \in \mathrm{Hom}(A^{\otimes 2}, A)$, i.e. $\mu (a, b) = a \cdot b$, for all $a, b \in A$. Now, consider the space $A [  [t ] ]$ of all formal power series in $t$ with coefficients from $A$. That is, 
\begin{align*}
    A[ [t]] = \big\{ \sum_{i=0}^\infty  a_i t^i ~ \! | ~ \! a_i \in A \text{ for all } i = 0 , 1 , 2, \ldots  \big\}.    
\end{align*}
Then $A[[t]]$ is obviously a ${\bf k} [[t]]$-module. A {\em formal deformation} of the Nijenhuis algebra $( (A, ~ \! \cdot ~ \!), N)$ consists of two formal sums
\begin{align*}
    \mu_t =~& \mu_0 + t \mu_1 + t^2 \mu_2 + \cdots ~ \in \mathrm{Hom} (A^{\otimes 2}, A) [[t]] ~~~ \text{ with } \mu_0 = \mu,\\
    N_t =~& N_0 + t N_1 + t^2 N_2 + \cdots ~ \in \mathrm{Hom} (A, A) [[t]] ~~~ \text{ with } N_0 = N
\end{align*}
that make $(A [[t]], \mu_t)$ into an associative algebra over the ring ${\bf k} [[t]]$, and the ${\bf k}[[t]]$-linear extension map $N_t : A[[t]] \rightarrow A[[t]]$ is a Nijenhuis operator on the associative algebra $(A[[t]], \mu_t)$. In other words, $( (A[[t]], \mu_t), N_t)$ is a Nijenhuis algebra over the ring ${\bf k} [[t]]$. We denote a formal deformation as above simply by the Nijenhuis algebra $( (A[[t]], \mu_t), N_t)$ over the ring ${\bf k} [[t]]$.

It follows that $( (A[[t]], \mu_t), N_t)$ is a formal deformation of the Nijenhuis algebra $((A, ~ \! \cdot ~ \! ), N)$ if and only if the following set of equations are hold: for any $p \geq 0$ and $a, b, c \in A$,
\begin{align*}
%\begin{cases}
   \sum_{i+j = p} \mu_i (\mu_j (a, b), c) =~& \sum_{i+j = p} \mu_i (a, \mu_j (b, c)), \\
    \sum_{i+j+k = p} \mu_i (N_j (a), N_k (b)) =~& \sum_{i+j+k = p} \big\{ N_i \big(  \mu_j (N_k (a), b) + \mu_j (a, N_k (b)) - N_k (\mu_j (a, b))   \big) \big\}.
%\end{cases}    
\end{align*}
Note that for $p= 0$, both the above identities are trivially hold as $\mu_0 = \mu$ and $N_0 = N$. However, for $p=1$, the above equations give rise to
\begin{align}
    \mu_1 (a, b) \cdot c + \mu_1 (a \cdot b, c) =~& a \cdot \mu_1 (b, c) + \mu_1 (a, b \cdot c), \label{alg-def}\\
    \mu_1 (N(a), N(b)) + N_1 (a) \cdot N (b) + N (a) \cdot N_1 (b) 
    =~&  N_1 (N (a) \cdot b + a \cdot N (b) - N (a \cdot b)) \label{nij-def} \\
    + N \big( \mu_1 ( N (a), b) + \mu_1 (a, N (b)) & - N \mu_1 (a, b) \big) + N ( N_1 (a) \cdot b + a \cdot N_1 (b) - N_1 (a \cdot b)), \nonumber
\end{align}
for all $a, b, c \in A$. The identity (\ref{alg-def}) simply means that $(\delta_\mathrm{Hoch} \mu_1) (a, b, c) = 0$ while the identity (\ref{nij-def}) can be equivalently rephrased as $ ( d_N (N_1) + \partial^N (\mu_1) ) (a, b) = 0$. Combining these two observations, we get that
\begin{align*}
    \delta_{\mathrm{NAlg}} (\mu_1, N_1) = \big(  \delta_\mathrm{Hoch} (\mu_1) ~ \!, ~ \! d_N (N_1) + \partial^N (\mu_1) \big) = 0.
\end{align*}
In other words, $(\mu_1, N_1) \in \mathrm{Hom}(A^{\otimes 2}, A) \oplus \mathrm{Hom}(A, A) = C^2_\mathrm{NAlg} ((A, N))$ is a $2$-cocycle in the cochain complex of the Nijenhuis algebra $((A, ~ \! \cdot ~ \! ), N)$ with coefficients in the adjoint Nijenhuis bimodule. The $2$-cocycle $(\mu_1, N_1)$ is called the {\em infinitesimal} of the given formal deformation $ ((A[[t]], \mu_t ) , N_t).$ 

\medskip

Let $ ((A[[t]], \mu_t = \sum_{i=0}^\infty t^i \mu_i) , N_t = \sum_{i=0}^\infty t^i N_i)$ and $ ((A[[t]], \mu'_t = \sum_{i=0}^\infty t^i \mu'_i) , N'_t = \sum_{i=0}^\infty t^i N'_i)$ be two formal deformations of the Nijenhuis algebra $((A, ~ \! \cdot ~ \! ), N)$. They are said to be {\em equivalent} if there exists a ${\bf k}[[t]]$-linear map $\varphi_t : A [[t]] \rightarrow A[[t]]$ of the form
\begin{align*}
    \varphi_t = \varphi_0 + t \varphi_1 + t^2 \varphi_2 + \cdots ~ \in \mathrm{Hom} (A, A) [[t]] \quad  (\text{with } \varphi_0 = \mathrm{Id}_A)
\end{align*}
that defines an isomorphism of Nijenhuis algebras from $ ((A[[t]], \mu_t) , N_t )$ to $ ((A[[t]], \mu'_t ) , N'_t )$. In other words, the following set of equations must hold: for any $p \geq 0$ and $a, b \in A$,
\begin{align*}
    \sum_{i+j = p} \varphi_i (\mu_j (a, b)) = \sum_{i+j+k = p} \mu_i' ( \varphi_j (a), \varphi_k (b)) \qquad \text{ and } \qquad \sum_{i+j = p} \varphi_i \circ N_j = \sum_{i+j = p} N_i' \circ \varphi_j.
\end{align*}
For $p= 0$, both the above identities are trivially hold as $\mu_0 = \mu_0' = \mu$, $N_0 = N'_0 = N$ and $\varphi_0 = \mathrm{Id}_A$. However, for $p =1$, we get that
\begin{align*}
    \mu_1 (a, b) - \mu_1' (a, b) =~& a \cdot \varphi_1 (b) - \varphi_1 (a \cdot b) + \varphi_1 (a) \cdot b = (\delta_\mathrm{Hoch} \varphi_1) (a, b), \\
    N_1 - N_1' =~& N \circ \varphi_1 - \varphi_1 \circ N.
\end{align*}
Combining these, we obtain
\begin{align*}
    (\mu_1, N_1) - (\mu'_1, N_1') = (\delta_\mathrm{Hoch} (\varphi_1) ~ \!, ~ \! - \partial^N (\varphi_1))  = \delta_\mathrm{NAlg} (\varphi_1).
\end{align*}

As a conclusion, we have the following result.

\begin{thm}\label{theorem-deformation1}
    Let $((A, ~ \! \cdot ~ \!), N)$ be a Nijenhuis algebra. Then the infinitesimal of any formal deformation of the Nijenhuis algebra $((A, ~ \! \cdot ~ \!), N)$ is a $2$-cocycle in the cochain complex $\{ C^\bullet_\mathrm{NAlg} ((A, N)), \delta_\mathrm{NAlg} \}$. Moreover, the cohomology class of the corresponding $2$-cocycle depends only on the equivalence class of the formal deformation.
\end{thm}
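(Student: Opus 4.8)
The plan is to obtain both assertions by simply reading off the coefficient of $t^1$ in the two families of structure equations that define a formal deformation, and then matching those first-order relations against the definition of $\delta_\mathrm{NAlg}$ in (\ref{diff-nalg}). Much of the bookkeeping is already assembled in the discussion preceding the statement, so the proof amounts to organizing it.

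First I would prove the cocycle claim. A formal deformation $((A[[t]],\mu_t),N_t)$ is required to satisfy one family of equations expressing associativity of $\mu_t$ and one expressing that $N_t$ is a Nijenhuis operator for $\mu_t$. Extracting the $t^1$-coefficient of the associativity equations gives (\ref{alg-def}), which is exactly the statement $\delta_\mathrm{Hoch}(\mu_1)=0$. Extracting the $t^1$-coefficient of the Nijenhuis equations gives (\ref{nij-def}); here I would move every term to one side and observe that the summands linear in $N_1$ reassemble into $(d_N N_1)(a,b)$ through the $n=1$ instance of the formula (\ref{dn-map1}), while the summands linear in $\mu_1$ reassemble into $(\partial^N \mu_1)(a,b)$. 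Thus (\ref{nij-def}) is precisely $d_N(N_1)+\partial^N(\mu_1)=0$. Comparing the two conclusions with the definition (\ref{diff-nalg}) of $\delta_\mathrm{NAlg}$ on a $2$-chain yields $\delta_\mathrm{NAlg}(\mu_1,N_1)=\big(\delta_\mathrm{Hoch}(\mu_1),\,d_N(N_1)+\partial^N(\mu_1)\big)=0$, so the infinitesimal $(\mu_1,N_1)$ is a $2$-cocycle.

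Next I would establish that the cohomology class depends only on the equivalence class. Given two equivalent deformations related by a formal isomorphism $\varphi_t=\mathrm{Id}_A+t\varphi_1+\cdots$, equivalence forces $\varphi_t$ to be an algebra homomorphism from $\mu_t$ to $\mu'_t$ and to satisfy $\varphi_t\circ N_t=N'_t\circ\varphi_t$. Reading off the $t^1$-coefficient of the first condition gives $\mu_1-\mu'_1=\delta_\mathrm{Hoch}(\varphi_1)$, and of the second gives $N_1-N'_1=N\circ\varphi_1-\varphi_1\circ N$, which equals $-\partial^N(\varphi_1)$ by the $n=1$ case of $\partial^N$. Assembling the pair produces $(\mu_1,N_1)-(\mu'_1,N'_1)=\big(\delta_\mathrm{Hoch}(\varphi_1),\,-\partial^N(\varphi_1)\big)=\delta_\mathrm{NAlg}(\varphi_1)$, so the two infinitesimals are cohomologous and determine the same class in $H^2_\mathrm{NAlg}((A,N))$.

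The computation is routine, and the only point demanding care is the two bookkeeping identifications: recognizing the first-order Nijenhuis relation as $d_N(N_1)+\partial^N(\mu_1)$ and the first-order commuting relation as $-\partial^N(\varphi_1)$. Each depends on matching the explicit sign conventions in (\ref{dn-map1}) and in the definition of $\partial^N$ against the expansion of (\ref{nij-def}), so I would verify those signs explicitly once; after that, both halves of the theorem follow immediately from the definition of $\delta_\mathrm{NAlg}$.
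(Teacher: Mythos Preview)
Your proposal is correct and follows essentially the same route as the paper: the paper likewise extracts the $t^1$-coefficients of the associativity and Nijenhuis identities to obtain $\delta_\mathrm{Hoch}(\mu_1)=0$ and $d_N(N_1)+\partial^N(\mu_1)=0$, and then reads off the $t^1$-coefficients of the equivalence conditions to get $(\mu_1,N_1)-(\mu_1',N_1')=\delta_\mathrm{NAlg}(\varphi_1)$. Your emphasis on carefully verifying the sign matches in $d_N$ and $\partial^N$ is exactly the right point of care.
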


Given a formal deformation of a Nijenhuis algebra, we have seen that the corresponding `infinitesimal' is a $2$-cocycle. However, an arbitrary $2$-cocycle need not be the infinitesimal of some formal deformation. In the following, we consider the notion of `infinitesimal deformations' of a Nijenhuis algebra $((A, ~ \! \cdot ~ \!), N)$, and show that the set of all equivalence classes of infinitesimal deformations has a bijection with the second cohomology group $H^2_\mathrm{NAlg} ((A, N)).$

Let $((A, ~ \! \cdot ~ \!), N)$ be a Nijenhuis algebra. As before, let $\mu$ denote the associative multiplication of $A$. An {\em infinitesimal deformation} of the Nijenhuis algebra $((A, ~ \! \cdot ~ \!), N)$ consists of two parametrized sum $\mu_t = \mu + t \mu_1$ and $N_t = N + t N_1$ that makes $( (A[t]/ (t^2), \mu_t), N_t)$ into a Nijenhuis algebra over the local Artinian ring ${\bf k}[t]/ (t^2)$. Similar to the case of formal deformations, we say that two infinitesimal deformations $( (A[t]/ (t^2), \mu_t = \mu+ t \mu_1), N_t = N + t N_1)$ and $( (A[t]/ (t^2), \mu'_t = \mu+ t \mu'_1), N'_t = N + t N'_1)$ are {\em equivalent} if there exists a ${\bf k}[t]/(t^2)$-linear map $\varphi_t : A[t]/ (t^2) \rightarrow A[t]/(t^2)$ of the form $\varphi_t = \mathrm{Id}_A + t \varphi_1$ that defines an isomorphism of the above Nijenhuis algebras. Then we have the following result.

\begin{thm}\label{theorem-deformation2}
    Let $((A, ~ \! \cdot ~ \!), N)$ be a Nijenhuis algebra. Then there is a bijection
    \begin{align*}
        \bigg\{ \substack{ \text{Set of all equivalence classes of infinitesimal } \\ \text{ deformations of } ((A, ~ \! \cdot ~ \!), N) } \bigg\} \quad \! \!  \! \xrightleftharpoons[\Psi]{\Phi}   \quad \! \! \! \!   H^2_\mathrm{NAlg} ((A, N)).
    \end{align*}
\end{thm}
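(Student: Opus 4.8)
The plan is to construct the two maps $\Phi$ and $\Psi$ explicitly and show they are mutually inverse, relying heavily on the first-order analysis already carried out for formal deformations. First I would define $\Phi$. Given an infinitesimal deformation $((A[t]/(t^2), \mu_t = \mu + t \mu_1), N_t = N + t N_1)$, the requirement that $((A[t]/(t^2), \mu_t), N_t)$ be a Nijenhuis algebra over $\mathbf{k}[t]/(t^2)$ amounts to the associativity of $\mu_t$ together with the Nijenhuis identity for $N_t$. Expanding these and collecting coefficients of powers of $t$, all terms of order $t^2$ and higher vanish identically in the truncated ring, the $t^0$-terms hold since $\mu$ is associative and $N$ is Nijenhuis, and the $t^1$-terms reproduce exactly equations (\ref{alg-def}) and (\ref{nij-def}). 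As observed there, these are equivalent to $\delta_\mathrm{Hoch}(\mu_1) = 0$ and $d_N(N_1) + \partial^N(\mu_1) = 0$, i.e. to $\delta_\mathrm{NAlg}(\mu_1, N_1) = 0$. Hence $(\mu_1, N_1)$ is a $2$-cocycle, and I set $\Phi$ to send the equivalence class of the deformation to the cohomology class $[(\mu_1, N_1)] \in H^2_\mathrm{NAlg}((A, N))$.

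Next I would verify that $\Phi$ is well-defined on equivalence classes. If two infinitesimal deformations $(\mu_t, N_t)$ and $(\mu'_t, N'_t)$ are equivalent via $\varphi_t = \mathrm{Id}_A + t \varphi_1$, then extracting the $t^1$-coefficients of the two compatibility conditions yields $\mu_1 - \mu'_1 = \delta_\mathrm{Hoch}(\varphi_1)$ and $N_1 - N'_1 = N \circ \varphi_1 - \varphi_1 \circ N = -\partial^N(\varphi_1)$, so that $(\mu_1, N_1) - (\mu'_1, N'_1) = \delta_\mathrm{NAlg}(\varphi_1)$ is a $2$-coboundary. This is exactly the computation recorded just before Theorem \ref{theorem-deformation1}, now read at the truncated level, and it shows that equivalent deformations produce cohomologous cocycles; thus $\Phi$ descends to the quotient.

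For the inverse $\Psi$, I would start from a $2$-cocycle $(\mu_1, N_1)$ and set $\mu_t = \mu + t \mu_1$, $N_t = N + t N_1$ on $A[t]/(t^2)$. The only point requiring a little care is that, because we work modulo $t^2$, there is no $t^2$-level obstruction to deal with: associativity of $\mu_t$ and the Nijenhuis identity for $N_t$ over $\mathbf{k}[t]/(t^2)$ reduce precisely to the $t^1$-equations, which hold by the cocycle condition $\delta_\mathrm{NAlg}(\mu_1, N_1) = 0$. This is in contrast to the formal case, where an arbitrary $2$-cocycle need not integrate beyond first order. I then let $\Psi([(\mu_1, N_1)])$ be the equivalence class of this infinitesimal deformation, and the computation of the previous paragraph, applied in reverse, shows that $\Psi$ is independent of the chosen representative of the cohomology class.

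Finally I would check that $\Phi$ and $\Psi$ are mutually inverse, which is immediate from the constructions: applying $\Phi \circ \Psi$ returns the same cohomology class, while applying $\Psi \circ \Phi$ returns the same equivalence class, since a deformation of the shape $\mu + t \mu_1$, $N + t N_1$ is recovered verbatim from its infinitesimal $(\mu_1, N_1)$. I do not anticipate any genuine obstacle here; essentially all of the analytic content has already been distilled in the formal-deformation discussion, and the remaining task is the bookkeeping that matches the truncated deformation and equivalence equations against the explicit descriptions of $2$-cocycles and $2$-coboundaries given in the Remark following the definition of $\delta_\mathrm{NAlg}$.
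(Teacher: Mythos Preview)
Your proposal is correct and follows essentially the same approach as the paper's own proof: construct $\Phi$ by extracting the infinitesimal $(\mu_1,N_1)$ and invoking the first-order analysis from the formal case to see it is a $2$-cocycle, check well-definedness via the coboundary computation, build $\Psi$ by noting that over $\mathbf{k}[t]/(t^2)$ the cocycle condition is exactly what is needed, and then observe the two maps are mutual inverses. The paper's proof is slightly more terse but structurally identical.
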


\begin{proof}
Let $\big( ( A[t] / (t^2), \mu_t = \mu + t \mu_1), N_t = N + t N_1 \big)$ be an infinitesimal deformation. Then one can easily check (similar to the formal deformation case) that $(\mu_1, N_1)$ is a $2$-cocyle. Moreover, equivalent infinitesimal deformations give rise to cohomologous $2$-cocycles. Hence there is a well-defined map $\Phi$ from the set of all equivalence classes of infinitesimal deformations to the second cohomology group $ H^2_\mathrm{NAlg} ((A, N)).$ On the other hand, we first observe that any $2$-cocycle $(\mu_1, N_1)$ gives rise to an infinitesimal deformation $\big( ( A[t] / (t^2), \mu_t = \mu + t \mu_1), N_t = N + t N_1 \big)$ of the given Nijenhuis algebra $((A, ~ \! \cdot ~ \!), N)$. Further, cohomologous $2$-cocycles correspond to equivalent infinitesimal deformations. This construction yields a map $\Psi$ in the other direction. Finally, the maps $\Phi$ and $\Psi$ are inverses to each other. This completes the proof. 
\end{proof}

\subsection{Abelian extensions of Nijenhuis algebras} In this subsection, we study abelian extensions of a Nijenhuis algebra by a Nijenhuis bimodule over it. We find a bijection between the set of all isomorphism classes of such abelian extensions and the second cohomology group of the Nijenhuis algebra.

\medskip

Let $((A, ~ \! \cdot ~ \!), N)$ be a Nijenhuis algebra. Also, let $(M, N_M)$ be a pair consisting of a vector space $M$ endowed with a linear map $N_M: M \rightarrow M$. Note that $M$ may not have any other additional structure. We now regard $M$ as an associative algebra with trivial associative multiplication. Then the pair $((M, 0), N_M)$ becomes a Nijenhuis algebra.
An {\bf abelian extension} of the Nijenhuis algebra $((A, ~ \! \cdot ~ \!), N)$ by the pair $(M, N_M)$ is another Nijenhuis algebra $((E, ~ \! \cdot_E ~ \! ), N_E )$ with a short exact sequence 
\begin{align}\label{abelian-ext}
   \xymatrix{
   0 \ar[r] & ( (M, 0 ), N_M) \ar[r]^i & ((E, ~ \! \cdot_E ~ \! ), N_E ) \ar[r]^p & ((A, ~ \! \cdot ~ \!), N) \ar[r] & 0
   }
\end{align}
of Nijenhuis algebras. We often denote an abelian extension as above simply by the Nijenhuis algebra $((E, ~ \! \cdot_E ~ \! ), N_E )$ when the underlying short exact sequence is understood.

\medskip

Let $((E, ~ \! \cdot_E ~ \! ), N_E )$ and $((E', ~ \! \cdot_{E'} ~ \! ), N_{E'} )$ be two abelian extensions of the Nijenhuis algebra $((A, ~ \! \cdot ~ \! ), N)$ by the pair $(M, N_M)$. These two abelian extensions are said to be {\em isomorphic} if there exists an isomorphism $\varphi : ( (E, ~ \! \cdot_E ~ \! ), N_E) \rightarrow ( (E', ~ \! \cdot_{E'}), N_{E'})$ of Nijenhuis algebras that make the following diagram commutative
\begin{align*}
   \xymatrix{
   0 \ar[r] & ( (M, 0 ), N_M) \ar[d]_{\mathrm{Id}_M} \ar[r]^i & ((E, ~ \! \cdot_E ~ \! ), N_E ) \ar[d]^{\varphi} \ar[r]^p & ((A, ~ \! \cdot ~ \!), N) \ar[r] \ar[d]^{\mathrm{Id}_A} & 0 \\
    0 \ar[r] & ( (M, 0 ), N_M) \ar[r]_{i'} & ((E', ~ \! \cdot_{E'} ~ \! ), N_{E'}) \ar[r]_{p'} & ((A, ~ \! \cdot ~ \!), N) \ar[r] & 0.
   }
\end{align*}

Let $((E, ~ \! \cdot_E ~ \!), N_E)$ be an abelian extension as of (\ref{abelian-ext}). A {\em section} of the map $p$ is a linear map $s: A \rightarrow E$ that satisfies $p \circ s = \mathrm{Id}_A$. Note that a section of $p$ always exists. Let $s: A \rightarrow E$ be any section of the map $p$. We define bilinear maps $\triangleright : A \times M \rightarrow M$ and $\triangleleft : M \times A \rightarrow M$ by
\begin{align*}
    a \triangleright u := s (a) \cdot_E i (u) \quad \text{ and } \quad u \triangleleft a := i (u) \cdot_E s (a), \text{ for } a \in A, u \in M.
\end{align*}
It can be easily checked that the above two maps don't depend on the choice of the section $s$. Moreover, these two maps make the vector space $M$ into an $A$-bimodule. Further, for any $a \in A$ and $u \in M$, we observe that
\begin{align*}
    N(a) \triangleright N_M (u) =~& s N (a) \cdot_E i N_M (u) \\
    =~& N_E s (a) \cdot_E N_E i (u) \quad (\because ~ (sN - N_E s) (a) \in M ~\text{and the multiplication of } M \text{ is trivial}) \\
    =~& N_E \big(  N_E s(a) \cdot_E i (u) + s(a) \cdot N_E i (u) - N_E ( s(a) \cdot_E i(u) ) \big)\\
    =~& N_E \big( sN (a) \cdot_E i(u) + s(a) \cdot_E i N_M (u) - N_E (a \triangleright u)   \big) \quad (\because ~ (sN - N_E s) (a) \in M)\\
    =~& N_M \big(  N (a) \triangleright u + a \triangleright N_M (u) - N_M (a \triangleright u)   \big) \quad (\because~ N_E \big|_M = N_M).
\end{align*}
Similarly, one can show that $N_M (u) \triangleleft N (a) = N_M \big( N_M (u) \triangleleft a + u \triangleleft N (a) - N_M (u \triangleleft a) \big)$. As a consequence, $((M, \triangleright, \triangleleft), N_M)$ becomes a Nijenhuis bimodule over the Nijenhuis algebra $((A , ~ \! \cdot ~ \! ), N)$.

\medskip

Next, let $((M, \triangleright, \triangleleft), N_M)$ be a given Nijenhuis bimodule over the Nijenhuis algebra $((A, ~ \! \cdot ~ \!), N)$. We denote by $\mathcal{E}xt^2 ((A, N), (M, N_M))$ the set of all isomorphism classes of abelian extensions of the Nijenhuis algebra $((A, ~ \! \cdot ~ \!), N)$ by the pair $(M, N_M)$ so that the induced Nijenhuis bimodule structure coincides with the prescribed one. Then we have the following result.

\begin{thm}\label{theorem-abelian}
    Let $((A, ~ \! \cdot ~ \!), N)$ be a Nijenhuis algebra and $((M, \triangleright, \triangleleft), N_M)$ be a given Nijenhuis bimodule over it. Then there is a bijection 
    \begin{align*}
    \mathcal{E}xt^2 ((A, N), (M, N_M)) ~ \cong ~ H^2_\mathrm{NAlg}  ((A, N); (M, N_M)).
    \end{align*}
\end{thm}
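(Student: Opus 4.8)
The plan is to construct the bijection explicitly by fixing a section and reading off a two-cocycle, then checking that the construction descends to cohomology and isomorphism classes. First I would take an abelian extension $((E, \cdot_E), N_E)$ as in (\ref{abelian-ext}) and choose a section $s : A \to E$ of $p$ (a linear map with $p \circ s = \mathrm{Id}_A$). Since $p$ is an algebra homomorphism and $i$ identifies $M$ with $\ker p$, the failure of $s$ to be multiplicative lands in $M$: define $\chi(a,b) := s(a) \cdot_E s(b) - s(a \cdot b) \in M$, and likewise the failure of $s$ to intertwine the operators gives $F(a) := (N_E \circ s - s \circ N)(a) \in M$. I would then verify that the pair $(\chi, F)$ satisfies the two-cocycle conditions (\ref{2co1}) and (\ref{2co2}): condition (\ref{2co1}) is the classical Hochschild statement that $\chi$ measures associativity, while (\ref{2co2}) arises by expanding the Nijenhuis identity $N_E(x) \cdot_E N_E(y) = N_E(N_E(x) \cdot_E y + x \cdot_E N_E(y) - N_E(x \cdot_E y))$ with $x = s(a)$, $y = s(b)$ and substituting $N_E \circ s = s \circ N + F$ and $s(a) \cdot_E s(b) = s(a \cdot b) + \chi(a,b)$, using that all products among elements of $M$ vanish and that $N_E|_M = N_M$. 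This defines the map $\Phi : \mathcal{E}xt^2 \to H^2_\mathrm{NAlg}$ once I check section-independence.

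Next I would show $\Phi$ is well defined on cohomology and isomorphism classes. A different section $s'$ differs from $s$ by a map $g := s' - s : A \to M$, and a direct computation shows the resulting cocycle changes by exactly $\delta_\mathrm{NAlg}(g) = (\delta_\mathrm{Hoch}(g), -\partial^{N,N_M}(g))$, using that $\partial^{N,N_M}(g)(a) = (gN - N_M g)(a)$ in degree one, matching the coboundary formula in Remark after (\ref{2co2}); hence the class in $H^2_\mathrm{NAlg}$ is independent of $s$. For isomorphic extensions, an isomorphism $\varphi : E \to E'$ over $\mathrm{Id}_A$ and $\mathrm{Id}_M$ carries a section $s$ of $p$ to the section $\varphi \circ s$ of $p'$, and since $\varphi$ restricts to $\mathrm{Id}_M$ on the kernel, the two extensions produce the same cocycle, so $\Phi$ descends to a well-defined map on $\mathcal{E}xt^2$.

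For the inverse $\Psi$, given a two-cocycle $(\chi, F)$ I would build an extension on the vector space $E := A \oplus M$ with multiplication $(a,u) \cdot_E (b,v) := (a \cdot b,\ a \triangleright v + u \triangleleft b + \chi(a,b))$ and operator $N_E(a,u) := (N(a),\ N_M(u) + F(a))$. I would check that (\ref{2co1}) guarantees associativity of $\cdot_E$ and that (\ref{2co2}), together with the Nijenhuis bimodule axioms, guarantees $N_E$ is a Nijenhuis operator on $(E, \cdot_E)$; the obvious inclusion $i$ and projection $p$ then form an abelian extension whose induced Nijenhuis bimodule structure is the given one (taking the section $a \mapsto (a,0)$). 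Cohomologous cocycles yield isomorphic extensions via $\varphi(a,u) = (a, u + g(a))$, so $\Psi$ is well defined on $H^2_\mathrm{NAlg}$.

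Finally I would verify $\Phi$ and $\Psi$ are mutual inverses: starting from $(\chi,F)$, the semidirect-type extension with the canonical section reproduces $(\chi,F)$ on the nose, giving $\Phi \circ \Psi = \mathrm{Id}$; conversely, any extension is isomorphic to the one built from its cocycle via the section $s$, using $\varphi(e) := (p(e),\ e - s(p(e)))$ as the comparison isomorphism, giving $\Psi \circ \Phi = \mathrm{Id}$. I expect the main obstacle to be the verification that (\ref{2co2}) is precisely equivalent to $N_E$ being a Nijenhuis operator on the constructed $E$: this is the step where the Nijenhuis bimodule compatibility conditions, the cross term $N_M(\chi(N(a),b) + \chi(a,N(b)) - \chi(a\cdot b))$, and the term $\chi(N(a),N(b))$ must combine correctly, and it is where the non-graph-characterizability of Nijenhuis operators makes the bookkeeping more delicate than in the Rota-Baxter case; everything else is either classical Hochschild theory or routine tracking of the coboundary formulas.
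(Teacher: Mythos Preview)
Your proposal is correct and follows essentially the same approach as the paper: extract $(\chi,F)$ from a section, verify it is a $2$-cocycle, build the inverse via the twisted semidirect product on $A\oplus M$, and check the two constructions are mutually inverse up to cohomology and isomorphism. You are in fact slightly more thorough than the paper in explicitly separating section-independence from isomorphism-invariance and in spelling out the comparison isomorphism $\varphi(e)=(p(e),\,e-s(p(e)))$ for $\Psi\circ\Phi=\mathrm{Id}$, whereas the paper simply asserts that the maps are inverse to each other.
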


\begin{proof}
Let $((E, \cdot_E), N_E)$ be an abelian extension as of (\ref{abelian-ext}). Using any section $s: A \rightarrow E$ of the map $p$, we may identify the space $E$ with the direct sum $A \oplus M$ and with this identification, the map $i$ is the inclusion and $p$ is the projection. Since $p : E \rightarrow A$ is an algebra homomorphism, it follows that $(a, 0) \cdot_E (b, 0) = (a \cdot b ~ \!, ~ \! \chi (a, b))$, for some $\chi \in \mathrm{Hom} (A^{\otimes 2}, M)$. The associativity of the multiplication $\cdot_E$ then implies that $\chi$ is a Hochschild $2$-cocyle (i.e. $\delta_\mathrm{Hoch} \chi = 0$). Further, the map $p$ satisfies $N \circ p = p \circ N_E$ which in turn implies that $N_E (a, 0) = (N(a), F (a))$, for some $F \in \mathrm{Hom} (A, M)$. The Nijenhuis property of $N_E$ then ensures that 
\begin{align*}
  N(a) \triangleright F(b) ~ \! +~& F(a) \triangleleft N(b) - F (N(a) \cdot b + a \cdot N(b) - N(a \cdot b)) - N_M ( F(a) \triangleleft b + a \triangleright F(b) - F (a \cdot b) ) \nonumber \\
    & + \chi (N (a) , N(b) ) - N_M \big(  \chi (N(a), b) + \chi (a, N (b)) - \chi (a \cdot b)  \big) = 0, \text{ for all } a, b \in A.
\end{align*}
As a consequence, we get that the pair $(\chi, F) \in \mathrm{Hom} (A^{\otimes 2}, M) \oplus \mathrm{Hom}(A, M) = C^2_\mathrm{NAlg} ((A, N); (M, N_M))$ is a $2$-cocycle. Similarly, one can easily observe that any two isomorphic abelian extensions are related by a map $\varphi : E = A \oplus M \rightarrow A \oplus M = E'$, $\varphi (a, u) = (a, u + g (a))$, for some $g \in \mathrm{Hom}(A, M)$. Since $\varphi$ is a Nijenhuis algebra (iso)morphism, we get that
\begin{align*}
    \varphi ((a, 0) \cdot_E (b, 0)) = \varphi (a, 0) \cdot_{E'} \varphi (b, 0) ~~~~ \text{ and } ~~~~ (N_{E'} \circ \varphi ) (a, 0) = (\varphi \circ N_E) (a, 0),
\end{align*}
for all $a, b \in A$. As a result, we obtain
\begin{align*}
    (\chi - \chi' ) (a, b) &~= a \triangleright g (b) - g (a \cdot b) + g (a) \triangleleft b \quad  \text{ and } \quad (F- F')(a) = (N_M g - g N) (a), \\
    &\mathrm{i.e.}~ (\chi, F) - (\chi', F') = ( \delta_\mathrm{Hoch} (g) ~ \! , ~ \! - \partial^{N, N_M} (g) ) = \delta_\mathrm{NAlg} (g).
\end{align*}
Here $(\chi', F')$ is the $2$-cocycle induced from the abelian extension $((E', \cdot_{E'}), N_{E'})$. The above construction yields a map $\Phi : \mathcal{E}xt^2 ((A, N), (M, N_M)) \rightarrow H^2_\mathrm{NAlg} ((A, N); (M, N_M))$.

\medskip

Conversely, let $(\chi, F) \in Z^2_\mathrm{NAlg} ((A, N); (M, N_M))$ be a $2$-cocycle. We set $E = A \oplus M$. Then we define a bilinear multiplication $\cdot_E : E \times E \rightarrow E$ and a linear map $N_E : E \rightarrow E$ by
\begin{align*}
    (a, u) \cdot_E (b, v) := ( a \cdot b ~ \!, ~ \! a \triangleright v + u \triangleleft b + \chi (a, b) ) ~~~~ \text{ and } ~~~~
    N_E (a, u) := (N (a) ~ \!, ~ \! N_M (u) + F (a)),
\end{align*}
for $(a, u), (b, v) \in E$. Since $\delta_\mathrm{Hoch} \chi = 0$, it follows that $\cdot_E$ defines an associative algebra structure on $E$. On the other hand, $d_{N, N_M} (F) + \partial^{N, N_M} (\chi) = 0$ implies that $N_E : E \rightarrow E$ is a Nijenhuis operator on the associative algebra $(E, \cdot_E)$. In other words, $((E, \cdot_E), N_E)$ is a Nijenhuis algebra. Moreover, 
\begin{align*}
   \xymatrix{
   0 \ar[r] & ( (M, 0 ), N_M) \ar[r]^i & ((E, ~ \! \cdot_E ~ \! ), N_E ) \ar[r]^p & ((A, ~ \! \cdot ~ \!), N) \ar[r] & 0
   }
\end{align*}
is a short exact sequence of Nijenhuis algebras, where $i$ is the inclusion and $p$ is the projection. Next, let $(\chi', F') \in Z^2_\mathrm{NAlg} ((A, N); (M, N_M))$ be any other $2$-cocycle cohomologous to $(\chi, F)$, i.e.
\begin{align*}
    (\chi, F) - (\chi', F') = \delta_\mathrm{NAlg} (g) = (\delta_\mathrm{Hoch} (g) ~ \! , ~ \! - \partial^{N, N_M} (g)),
\end{align*}
for some linear map $g \in \mathrm{Hom} (A, M)$. If $(( E' = A \oplus M, ~ \! \cdot_{E'} ), N_{E'} )$ is the abelian extension corresponding to the $2$-cocycle $(\chi', F')$, then there is an isomorphism of Nijenhuis algebras
\begin{align*}
    \varphi : ((E= A \oplus M, \cdot_E), N_E) \rightarrow ((E'= A \oplus M, \cdot_{E'}), N_{E'}) ~~~~ \text{ given by } ~~~~ \varphi (a, u) \mapsto (a, u + g (a)).
\end{align*}
Hence we obtain a well-defined map $\Psi : H^2_\mathrm{NAlg} ((A, N); (M, N_M)) \rightarrow \mathcal{E}xt^2 ((A, N), (M, N_M) ).$ Finally, the maps $\Phi$ and $\Psi$ are inverses to each other. This concludes the proof.
\end{proof}

\section{Inducibility of Nijenhuis algebra automorphisms}\label{sec5}
Given an abelian extension of a Nijenhuis algebra by a Nijenhuis bimodule, we study the inducibility of a pair of Nijenhuis algebra automorphisms from an automorphism of the extension. To find an obstruction, we first define the Wells map in the context of Nijenhuis algebras. Our main result shows that a pair of Nijenhuis algebra automorphisms is inducible if and only if the pair is `compatible' and the image of this pair under the Wells map vanishes identically.

Let $((A, ~ \! \cdot ~ \!), N)$ be a Nijenhuis algebra and $((M, \triangleright, \triangleleft), N_M)$ be a Nijenhuis bimodule over it. Suppose
\begin{align}\label{ind-abel}
   \xymatrix{
   0 \ar[r] & ( (M, 0 ), N_M) \ar[r]^i & ((E, ~ \! \cdot_E ~ \! ), N_E ) \ar[r]^p & ((A, ~ \! \cdot ~ \!), N) \ar[r] & 0
   }
\end{align}
is a given abelian extension describing an element of $\mathcal{E}xt^2 ((A,N), (M, N_M))$, i.e. the induced $A$-bimodule structure on $M$ coincides with the prescribed one. Let $\mathrm{Aut}_M (E, N_E)$ be the set of all Nijenhuis algebra automorphisms $\varphi \in \mathrm{Aut} (E, N_E)$ that satisfies $\varphi (M) \subset M$. Then $\mathrm{Aut}_M (E, N_E)$ is obviously a subgroup of $\mathrm{Aut} (E, N_E)$. Let $\varphi \in \mathrm{Aut}_M (E, N_E)$. Then we naturally have $\varphi \big|_M \in \mathrm{Aut}(M, N_M)$. Next, for any section $s: A \rightarrow E$ of the map $p$, we define a map $\overline{\varphi}: A \rightarrow A$ by $\overline{\varphi} (a) := (p \varphi s) (a)$, for $a \in A.$ It can be checked that the map $\overline{\varphi}$ doesn't depend on the choice of the section $s$. Further, the map $\overline{\varphi}$ is a bijection on the set $A$. For any $a, b \in A$, we also have
\begin{align*}
    \overline{\varphi} (a \cdot b) =~& p \varphi ( s(a) \cdot_E s (b) - \chi (a, b)) \\
    =~& p \varphi (  s(a) \cdot_E s (b) ) \quad (\because ~ \! \varphi (M) \subset M \text{ and } p \big|_M = 0) \\
    =~& p \varphi s (a) \cdot p \varphi s (b) = \varphi (a) \cdot \varphi (b) 
\end{align*}
and additionally 
\begin{align*}
  ( N \circ \overline{\varphi} )(a) =  N p \varphi s (a) =~& p \varphi N_E s (a) \quad (\because ~ \! Np = p N_E \text{ and } N_E \varphi = \varphi N_E)\\
  =~& p \varphi s N (a)  \quad (\because ~ \! (N_E s  - s N) (a) \in M, ~ \varphi (M) \subset M \text{ and } p \big|_M = 0) \\
  =~& (\overline{\varphi} \circ N)(a).
\end{align*}
This shows that $\overline{\varphi} \in \mathrm{Aut} (A, N)$ is a Nijenhuis algebra automorphism. Hence we obtain a map
\begin{align*}
    \tau : \mathrm{Aut}_M (E, N_E) \rightarrow \mathrm{Aut} (M, N_M) \times \mathrm{Aut} (A, N) ~~~  \text{ given by } ~~~ \tau (\varphi) = (\varphi \big|_M, \overline{\varphi}),
\end{align*}
for $\varphi \in \mathrm{Aut}_M (E, N_E).$ A pair of Nijenhuis algebra automorphisms $(\beta, \alpha) \in \mathrm{Aut} (M, N_M) \times \mathrm{Aut} (A, N)$ is said to be {\em inducible} if there exists a Nijenhuis algebra automorphism $\varphi \in \mathrm{Aut}_M (E, N_E)$ such that $\tau (\varphi) = (\beta, \alpha)$, i.e. $\varphi \big|_M = \beta$ and $\overline{\varphi} = \alpha$. The inducibility problem then asks to find a necessary and sufficient condition under which a pair $(\beta, \alpha) \in \mathrm{Aut} (M, N_M) \times \mathrm{Aut} (A, N)$ of Nijenhuis algebra automorphisms is inducible.

\begin{prop}\label{pre-final-prop}
    Let (\ref{ind-abel}) be a given abelian extension. For a fixed section $s$, suppose the above abelian extension corresponds to the $2$-cocycle $(\chi, F)$. Then a pair $(\beta, \alpha) \in \mathrm{Aut} (M, N_M) \times \mathrm{Aut}(A, N)$ of Nijenhuis algebra automorphisms is inducible if and only if the following conditions hold:
    \begin{itemize}
        \item[(I)] $\beta (a \triangleright u) = \alpha (a) \triangleright \beta (u)$ and $\beta (u \triangleleft a) = \beta (u) \triangleleft \alpha (a)$, for $a \in A$ and $u \in M$,
        \item[(II)] there exists a linear map $\lambda : A \rightarrow M$ satisfying
        \begin{align}
            \beta (\chi (a, b)) - \chi (\alpha (a), \alpha (b)) =~& \alpha (a) \triangleright \lambda (b) + \lambda (a) \triangleleft \alpha (b) - \lambda (a \cdot b), \label{eqn-8}\\
            \beta (F (a)) - F (\alpha (a)) =~& N_M (\lambda (a) ) - \lambda (N (a)), \text{ for all } a, b \in A. \label{eqn-9}
        \end{align}
    \end{itemize}
\end{prop}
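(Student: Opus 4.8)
The plan is to use the chosen section $s$ to identify $E$ with $A \oplus M$, under which (exactly as in the proof of Theorem \ref{theorem-abelian}) the multiplication and the Nijenhuis operator take the explicit forms $(a,u) \cdot_E (b,v) = (a \cdot b,~ a \triangleright v + u \triangleleft b + \chi(a,b))$ and $N_E(a,u) = (N(a),~ N_M(u) + F(a))$. The first step is to pin down the shape of any automorphism inducing $(\beta, \alpha)$. If $\varphi \in \mathrm{Aut}_M(E, N_E)$ satisfies $\varphi \big|_M = \beta$ and $\overline{\varphi} = \alpha$, then writing $\varphi(a, 0) = (\overline{\varphi}(a), \lambda(a)) = (\alpha(a), \lambda(a))$ for a suitable linear map $\lambda : A \rightarrow M$ and using $\varphi(0, u) = (0, \beta(u))$, additivity forces $\varphi(a, u) = (\alpha(a),~ \beta(u) + \lambda(a))$ for all $a \in A$, $u \in M$. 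Conversely, for an arbitrary linear map $\lambda$ this formula defines a bijection (since $\alpha$ and $\beta$ are), so the inducibility of $(\beta, \alpha)$ is equivalent to the existence of some $\lambda$ for which this $\varphi$ is a homomorphism of Nijenhuis algebras lying in $\mathrm{Aut}_M(E, N_E)$.

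Next I would impose the two defining conditions on $\varphi$ separately. Requiring that $\varphi$ preserve $\cdot_E$, i.e. $\varphi((a,u) \cdot_E (b,v)) = \varphi(a,u) \cdot_E \varphi(b,v)$, and comparing the second components (the first components agree automatically because $\alpha$ is an algebra map) yields a single identity valid for all $a, b \in A$ and $u, v \in M$. Since $a, b, u, v$ vary independently, this identity separates: setting $u = v = 0$ produces precisely the $\chi$-equation (\ref{eqn-8}), and subtracting it off leaves $\beta(a \triangleright v) + \beta(u \triangleleft b) = \alpha(a) \triangleright \beta(v) + \beta(u) \triangleleft \alpha(b)$, whose specializations at $u = 0$ and at $v = 0$ are exactly the two equalities constituting condition (I). Then imposing $\varphi \circ N_E = N_E \circ \varphi$ and again comparing second components, the terms $\beta N_M(u)$ and $N_M \beta(u)$ cancel because $\beta \in \mathrm{Aut}(M, N_M)$, while the first components match because $\alpha \in \mathrm{Aut}(A, N)$; what remains is exactly equation (\ref{eqn-9}).

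This argument proves both directions simultaneously. If $(\beta, \alpha)$ is inducible, the map $\lambda$ extracted from $\varphi$ witnesses condition (I) together with (\ref{eqn-8}) and (\ref{eqn-9}); conversely, given (I) and a linear map $\lambda$ satisfying (\ref{eqn-8}) and (\ref{eqn-9}), one checks that $\varphi(a,u) = (\alpha(a),~ \beta(u) + \lambda(a))$ is a Nijenhuis algebra automorphism belonging to $\mathrm{Aut}_M(E, N_E)$ with $\tau(\varphi) = (\beta, \alpha)$, the verification being just the previous computation read in reverse. I expect the only genuinely delicate point to be the bookkeeping in the homomorphism computation: one must use that $\alpha$ is an algebra automorphism to replace $\alpha(a \cdot b)$ by $\alpha(a) \cdot \alpha(b)$, use linearity of $\beta$ to distribute it across $a \triangleright v + u \triangleleft b + \chi(a,b)$, and then carefully isolate the terms depending on $u, v$ from those involving only $\lambda$ and $\chi$. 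Once the form of $\varphi$ is fixed, everything else is a routine matching of coefficients.
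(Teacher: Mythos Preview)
Your proposal is correct and follows essentially the same approach as the paper: both identify $E$ with $A\oplus M$ via the section, pin down the shape $\varphi(a,u)=(\alpha(a),\beta(u)+\lambda(a))$ of any candidate automorphism, and then reduce the homomorphism and Nijenhuis-compatibility conditions on $\varphi$ to (I), (\ref{eqn-8}) and (\ref{eqn-9}). The only cosmetic difference is that you run the computation once and read it in both directions, whereas the paper treats the forward and converse implications separately (defining $\lambda = \varphi s - s\alpha$ in the first part and then rebuilding $\varphi$ from $\lambda$ in the second).
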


\begin{proof}
    Let $(\beta, \alpha)$ be an inducible pair, i.e. there exists a Nijenhuis algebra automorphism $\varphi \in \mathrm{Aut}_M (E, N_E)$ such that $\varphi \big|_M = \beta $ and $p \varphi s = \alpha$. For any $a \in A$ and $u \in M$, we observe that 
    \begin{align*}
        \beta (a \triangleright u) - \alpha (a) \triangleright \beta (u) =~& \beta (s (a) \cdot_E i(u)) - s \alpha (a) \cdot_E i \beta (u) \\
         =~& \varphi s (a) \cdot_E \varphi i (u) - s \alpha (a) \cdot_E i \beta (u) \quad (\because ~ \! \beta = \varphi|_M)\\
        =~& (\varphi s - s \alpha) (a) \cdot_E i \beta (u) = 0 \quad (\because ~ \! (\varphi s - s \alpha) (a) \in \mathrm{ker~} p = \mathrm{im~} i \cong M).
    \end{align*}
    In the same way, one can show that $\beta (u \triangleleft a) = \beta (u) \triangleleft \alpha (a)$. Hence the identities in (I) follow.

    Next, we define a map $\lambda : A \rightarrow M$ by $\lambda (a) = (\varphi s -  s \alpha)(a)$, for $a \in A$. Then for any $a, b \in A$, we observe that
    \begin{align*}
        &\alpha (a) \triangleright \lambda (b) + \lambda (a) \triangleleft \alpha (b) - \lambda (a \cdot b) \\
        &= s \alpha (a) \cdot_E (\varphi s - s \alpha) (b) + (\varphi s - s \alpha) (a) \cdot_E s \alpha (b) - (\varphi s - s \alpha ) (a \cdot b)\\
        &= \varphi s (a) \cdot_E (\varphi s - s \alpha) (b) + \varphi s (a) \cdot_E s \alpha (b) - s \alpha (a) \cdot_E s \alpha (b) - \varphi s (a \cdot b) + s ( \alpha (a) \cdot \alpha (b) )\\
         & \qquad \qquad \qquad \qquad \qquad \quad (\because ~ \! (\varphi s - s \alpha) (a) \in M \text{ and the multiplication of } M \text{ is trivial}) \\
        &= \varphi s (a) \cdot_E \varphi s (b) - \varphi s (a \cdot b) - s \alpha 
        (a) \cdot_E s \alpha (b) + s (\alpha (a) \cdot \alpha (b)) \quad (\text{after cancellation and rearrangement})\\
        &= \beta ( s(a) \cdot_E s (b) - s (a \cdot b)) - \big( s \alpha(a) \cdot_E s \alpha (b) - s ( \alpha (a) \cdot \alpha (b) )  \big)\\
        &= \beta (\chi (a, b)) - \chi (\alpha (a) , \alpha (b) )
    \end{align*}
    and
    \begin{align*}
        \beta (F (a)) - F (\alpha (a)) 
        =~& \beta  ( N_E s (a) - s N(a)) - (N_E s \alpha (a) - s N \alpha (a))\\
        =~& \varphi N_E s (a) - \varphi s N (a) - N_E s \alpha (a) + sN \alpha (a) \quad (\because ~ \! \beta = \varphi|_M)\\
        =~& N_E (\varphi s - s \alpha) (a) - (\varphi s - s \alpha) N (a) \quad (\because ~ \! \varphi N_E = N_E \varphi \text{ and } \alpha N = N \alpha) \\
        =~& N_M (\lambda (a)) - \lambda (N (a)) \quad (\because ~ \! N_M = N_E \big|_M).
    \end{align*}
    Hence the one direction of the proof follows.

    Conversely, let $(\beta, \alpha)$ satisfy the conditions in (I) and (II). Since $s$ is a section of the map $p$, any element $e \in E$ can be uniquely written as $e = s (a) + u$, for some $a \in A$ and $u \in M$. We now define a map $\varphi : E \rightarrow E$ by
    \begin{align*}
         \varphi (e) = \varphi (s (a) + u ) = s (\alpha (a)) + (\beta (u) + \lambda (a)).
    \end{align*}
    The map $\varphi$ is clearly injective as $\varphi (e) = 0$ means that $s (\alpha (a)) = 0$ and $\beta (u) + \lambda (a) = 0$. Since $s, \alpha, \beta $ are all injectives we get that $a = 0$ and $u = 0$ which in turn implies that $e = s (a) + u = 0$. It is also surjective as any element $e = s(a) + u \in E$ has a unique preimage $e' = s (\alpha^{-1} (a)) + (\beta^{-1}(u) - \beta^{-1} \lambda \alpha^{-1} (a)) \in E$. This shows that $\varphi$ is bijective. Next, we claim that $\varphi: E \rightarrow E$ is a Nijenhuis algebra automorphism. For any two elements $e = s(a) + u$ and $e' = s (b) + v$ from the vector space $E$, we observe that
    \begin{align*}
        &\varphi (e) \cdot_E \varphi (e') \\
        &= \big( s (\alpha (a)) + \beta (u) + \lambda (a)   \big) \cdot_E \big( s (\alpha (b)) + \beta (v) + \lambda (b)   \big)\\
        &= s (\alpha (a)) \cdot_E s (\alpha (b) ) + s (\alpha (a)) \cdot_E \beta (v) + s (\alpha (a)) \cdot_E \lambda (b) + \beta (u) \cdot_E s (\alpha (b)) + \underbrace{\beta (u) \cdot_E \beta (v)}_{= 0}\\
        & \quad \qquad \qquad  + \underbrace{\beta (u) \cdot_E \lambda (b)}_{=0} + \lambda (a) \cdot_E s (\alpha (b)) + \underbrace{\lambda (a) \cdot_E \beta (v)}_{= 0} + \underbrace{\lambda (a) \cdot_E \lambda (b)}_{= 0} \\
        &= s (\alpha (a) \cdot \alpha (b)) + \chi (\alpha (a), \alpha (b)) + \alpha (a) \triangleright \beta (v) + \alpha (a) \triangleleft \lambda (b) + \beta (u) \triangleleft \alpha (b) + \lambda (a) \triangleleft \alpha (b)\\
        &= s ( \alpha (a \cdot b)) + \beta \big( \chi (a, b) + a \triangleright v + u \triangleleft b   \big) + \lambda (a \cdot b) \quad (\text{by using (I) and } (\ref{eqn-8}))\\
        &= \varphi \big(  s (a \cdot b) + \chi (a, b) +  a \triangleright v + u \triangleleft b \big) \quad (\text{from the definition of } \varphi) \\
        &= \varphi \big(  s (a \cdot b) + \chi (a, b) + s (a) \cdot_E v + u \cdot_E s(b) + u \cdot_E v  \big) \quad (\because ~ \! u \cdot_E v = 0)\\
        &= \varphi \big(    s(a) \cdot_E s (b) + s (a) \cdot_E v + u \cdot_E s(b) + u \cdot_E v  \big)\\
        &= \varphi \big( (s (a) + u) \cdot_E ( s (b) + v)   \big) = \varphi (e \cdot_E e').
    \end{align*}
    and
    \begin{align*}
        (\varphi \circ N_E ) (e) =~& (\varphi \circ N_E) (s (a) + u)\\
        =~& \varphi \big( s N(a) + F(a) +N_M (u)   \big) \quad (\because ~ \! N_E s - s N = F \text{ and } N_M = N_E \big|_M)\\
        =~& s (\alpha N (a)) + \beta (F (a) + N_M (u)) + \lambda (N (a))\\
        =~& N_E s (\alpha (a)) + N_M \beta (u) + \big( \beta (F (a)) - F (\alpha (a)) + \lambda (N (a))   \big) \\
        & \qquad \qquad \qquad (\because ~ \! \alpha N = N \alpha, ~ \beta N_M = N_M \beta \text{ and } N_E s - sN = F)\\
        =~& N_E s (\alpha (a)) + N_M \beta (u) + N_M (\lambda (a)) \quad (\text{by }(\ref{eqn-9}))\\
        =~& N_E \big(  s (\alpha (a)) + \beta (u) + \lambda (a)  \big) = (N_E \circ \varphi) (e).
    \end{align*}
    This proves our claim that $\varphi$ is a Nijenhuis algebra automorphism (i.e. $\varphi \in \mathrm{Aut} (E, N_E)$). Since $\varphi (M) \subset M $, we get that $\varphi \in \mathrm{Aut}_M (E, N_E)$. Finally, for any $a \in A$ and $u \in M$,
    \begin{align*}
    \overline{\varphi} (a) =~&  p \varphi (s (a) + 0) = p \big( s (\alpha (a)) + \lambda (a) \big)   =\alpha (a) \quad (\because ~ \! ps = \mathrm{Id}_A \text{ and } p \big|_M = 0),\\
         \varphi (u) =~&  \varphi (s (0) + u)   = \beta (u).
    \end{align*}
    Hence $\tau (\varphi) = (\varphi \big|_M , \overline{ \varphi}) = (\beta, \alpha)$ which shows that the pair $(\beta, \alpha)$ is inducible.
\end{proof}

It is important to observe that the necessary and sufficient condition obtained in the above proposition depends on the section $s$. In the following, we aim to formulate a condition that doesn't depend on the choice of any section. Let $s$ be any section that induces the $2$-cocycle $(\chi, F)$ corresponding to the given abelian extension. For any pair $(\beta, \alpha) \in \mathrm{Aut} (M, N_M) \times \mathrm{Aut} (A, N)$, we define maps $\chi_{(\beta, \alpha)} \in \mathrm{Hom} (A^{\otimes 2}, M)$ and $F_{(\beta, \alpha)} \in \mathrm{Hom}(A, M)$ by 
\begin{align*}
 \chi_{(\beta, \alpha)} (a, b) = \beta \circ \chi (\alpha^{-1} (a), \alpha^{-1} (b)) ~~~~ \text{ and } ~~~~  F_{(\beta, \alpha)} (a) = \beta \circ F (\alpha^{-1} (a)), \text{ for } a, b \in A.
\end{align*}

With the above notations, we have the following.

\begin{prop}
   \begin{itemize}
       \item[(i)] The pair $(\chi_{(\beta, \alpha)}, F_{(\beta, \alpha)})$ is also a $2$-cocycle.
       \item[(ii)] Moreover, the cohomology class of the difference $(\chi_{(\beta, \alpha)}, F_{(\beta, \alpha)}) - (\chi, F)$ is independent of the choice of $s$.
   \end{itemize}
\end{prop}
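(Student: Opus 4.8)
The cleanest route is to recognize that a compatible pair $(\beta,\alpha)$ acts on the entire cochain complex of the Nijenhuis algebra by a \emph{chain automorphism}, after which both parts become formal. Concretely, for each $n$ define the twisting operator $\theta = \theta_{(\beta,\alpha)}$ on $\mathrm{Hom}(A^{\otimes n}, M)$ by $\theta(f) := \beta\circ f\circ(\alpha^{-1})^{\otimes n}$; on $C^2_\mathrm{NAlg}$ this gives exactly $\theta(\chi,F) = (\chi_{(\beta,\alpha)}, F_{(\beta,\alpha)})$. Throughout I use the three structural facts at hand: $\alpha\in\mathrm{Aut}(A,N)$ is an algebra map with $\alpha^{-1}N = N\alpha^{-1}$; $\beta\in\mathrm{Aut}(M,N_M)$ satisfies $\beta N_M = N_M\beta$; and the compatibility relations $\beta(a\triangleright u) = \alpha(a)\triangleright\beta(u)$ and $\beta(u\triangleleft a) = \beta(u)\triangleleft\alpha(a)$ of condition (I) in Proposition \ref{pre-final-prop}. (Thus the statement is to be read for a \emph{compatible} pair, which is the natural domain of the Wells map.)

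The single lemma doing all the work is that $\theta$ commutes with each of the three operators assembling $\delta_\mathrm{NAlg}$, namely $\delta_\mathrm{Hoch}$, the map $d_{N,N_M}$ of (\ref{dnnm})--(\ref{dnnm-2}), and the connecting map $\partial^{N,N_M}$. Each check is the same manipulation: substitute $\theta(f) = \beta\circ f\circ(\alpha^{-1})^{\otimes n}$, pull $\beta$ outward past every occurrence of $N$, $N_M$, $\triangleright$ and $\triangleleft$ using $\beta N_M = N_M\beta$ together with condition (I), and push each copy of $\alpha^{-1}$ inward through every product using that $\alpha^{-1}$ is an algebra map commuting with $N$; what is left is precisely $\beta\circ(\delta_\mathrm{Hoch}f)\circ(\alpha^{-1})^{\otimes(n+1)} = \theta(\delta_\mathrm{Hoch}f)$, and likewise for $d_{N,N_M}$ and $\partial^{N,N_M}$. (For $\partial^{N,N_M}$ only $\beta N_M = N_M\beta$ and $\alpha^{-1}N = N\alpha^{-1}$ are needed, as no bimodule action appears in its alternating sum.) Granting this, $\theta\circ\delta_\mathrm{NAlg} = \delta_\mathrm{NAlg}\circ\theta$ in every degree, so $\theta$ is a chain automorphism; part (i) is then immediate, since $(\chi_{(\beta,\alpha)}, F_{(\beta,\alpha)}) = \theta(\chi,F)$ and a chain map sends the cocycle $(\chi,F)$ to a cocycle.

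For part (ii) I would first record the dependence of $(\chi,F)$ on the section. If $s'$ is a second section, then $\ell := s' - s$ takes values in $M$, and a short computation (using that $M$ carries the trivial multiplication and $N_E|_M = N_M$, exactly as in the proof of Theorem \ref{theorem-abelian}) gives $(\chi',F') = (\chi,F) + \delta_\mathrm{NAlg}(\ell)$. Applying the chain map $\theta$ yields $\theta(\chi',F') = \theta(\chi,F) + \delta_\mathrm{NAlg}(\theta\ell)$. Subtracting,
\[
\bigl[\theta(\chi',F') - (\chi',F')\bigr] - \bigl[\theta(\chi,F) - (\chi,F)\bigr] = \delta_\mathrm{NAlg}(\theta\ell - \ell),
\]
which is a coboundary; hence the cohomology class of $(\chi_{(\beta,\alpha)}, F_{(\beta,\alpha)}) - (\chi,F) = \theta(\chi,F) - (\chi,F)$ is independent of the choice of $s$.

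The only genuine obstacle is the bookkeeping in the commutation lemma, in particular matching $\theta$ against $d_{N,N_M}$ and against the alternating sum defining $\partial^{N,N_M}$ (keeping track of the powers $N_M^{k}$ and their signs). There is no conceptual difficulty once the three commutation relations are in place, so I would isolate ``$\theta_{(\beta,\alpha)}$ is a chain automorphism of $C^\bullet_\mathrm{NAlg}((A,N);(M,N_M))$'' as a standalone lemma and derive (i) and (ii) from it as above.
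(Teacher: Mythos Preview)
Your approach and the paper's are the same computation in different dress: both rest on the fact that the twisting operator $\theta(f)=\beta\circ f\circ(\alpha^{-1})^{\otimes n}$ commutes with $\delta_\mathrm{NAlg}$. The paper carries this out implicitly, by ``replacing $a,b,c$ by $\alpha^{-1}(a),\alpha^{-1}(b),\alpha^{-1}(c)$'' in the cocycle identities and applying $\beta$; you package the same move as a chain-automorphism lemma. For part (ii) the paper writes the coboundary as $\delta_\mathrm{NAlg}(\beta g\alpha^{-1}-g)$ with $g=s-s'$, which is exactly your $\delta_\mathrm{NAlg}(\theta\ell-\ell)$ up to a sign (since $\ell=-g$).

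The one substantive point where you go further is the hypothesis: you correctly insist that the pair $(\beta,\alpha)$ be \emph{compatible} (condition (I) of Proposition~\ref{pre-final-prop}). The paper states the proposition for an arbitrary pair in $\mathrm{Aut}(M,N_M)\times\mathrm{Aut}(A,N)$, but its own substitution argument silently uses compatibility --- without it one cannot pass from $\beta\bigl(\alpha^{-1}(a)\triangleright\chi(\alpha^{-1}(b),\alpha^{-1}(c))\bigr)$ to $a\triangleright\chi_{(\beta,\alpha)}(b,c)$, so $\delta_\mathrm{Hoch}\chi_{(\beta,\alpha)}$ need not vanish. Your chain-map formulation makes this dependence explicit and yields (i) and (ii) at once; the paper's direct verification is shorter on the page but hides this assumption.
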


\begin{proof}
    (i) Since $(\chi, F)$ is a $2$-cocycle, the identities (\ref{2co1}) and (\ref{2co2}) are hold. In both of these identities, if we replace $a, b, c $ by $\alpha^{-1} (a), \alpha^{-1}(b), \alpha^{-1} (c)$ respectively, we obtain the corresponding identities for the pair $(\chi_{(\beta, \alpha)}, F_{ (\beta, \alpha)})$. This shows that $(\chi_{(\beta, \alpha)}, F_{ (\beta, \alpha)})$ is a $2$-cocycle.
    
    \medskip
    
    (ii) Let $s' : A \rightarrow M$ be any other section of the map $p$. If $(\chi', F')$ is the $2$-cocycle corresponding to the abelian extension induced by the section $s'$, then the $2$-cocycles $(\chi, F)$ and $(\chi', F')$ are cohomologous. In fact, we have $(\chi, F) - (\chi', F') = \delta_\mathrm{NAlg} (g)$, where $g := s - s'$. Then it follows from the definitions of $(\chi_{(\beta, \alpha)}, F_{(\beta, \alpha)})$ and $(\chi'_{(\beta, \alpha)}, F'_{(\beta, \alpha)})$ that $ (\chi_{(\beta, \alpha)}, F_{(\beta, \alpha)}) - (\chi'_{(\beta, \alpha)}, F'_{(\beta, \alpha)}) = \delta_\mathrm{NAlg} (\beta g \alpha^{-1})$. Thus, we obtain that
    \begin{align*}
        \big\{ (\chi_{(\beta, \alpha)}, F_{(\beta, \alpha)}) - (\chi, F)   \big\} - \big\{  (\chi'_{(\beta, \alpha)}, F'_{(\beta, \alpha)}) - (\chi', F')  \big\} = \delta_\mathrm{NAlg} (\beta g \alpha^{-1} - g) \text{ is a } 2\text{-coboundary}.
    \end{align*}
    Hence the result follows.
\end{proof}

%Let ................  be an abelian extension of the Nijenhuis algebra .......... by ............ Given any section $s$ of the map $p$, let the above extension corresponds to the $2$-cocycle $(\chi, F) \in Z^2_\mathrm{NAlg} ((A, N), (M, N_M))$.
With the help of the above notations, we will now define a set-map (which we refer as the {\bf Wells map} in the context of Nijenhuis algebras)
\begin{align*}
 \mathcal{W} : \mathrm{Aut} (M, N_M) \times \mathrm{Aut} (A, N) \rightarrow H^2_\mathrm{NAlg} ((A, N); (M, N_M)) ~~~ \text{ by } ~~~   \mathcal{W} ((\beta, \alpha )) := \big[ (\chi_{(\beta, \alpha)}, F_{(\beta, \alpha)}) - (\chi, F)   \big],
\end{align*}
for $(\beta, \alpha) \in \mathrm{Aut} (M, N_M) \times \mathrm{Aut} (A, N)$. Then it follows from the previous proposition that the Wells map doesn't depend on the choice of the section $s$. In terms of the Wells map, Proposition \ref{pre-final-prop} can be rephrased as follows.

%We are now ready to prove the main result of this section.

\begin{thm}\label{theorem-wells}
    Let (\ref{ind-abel}) be an abelian extension of the Nijenhuis algebra $((A, ~ \! \cdot ~ \! ), N)$ by a Nijenhuis bimodule $((M, \triangleright, \triangleleft), N_M)$. Then a pair $(\beta, \alpha) \in  \mathrm{Aut} (M, N_M) \times \mathrm{Aut} (A, N)$ is inducible if and only if the pair $(\beta, \alpha)$ is `compatible' in the sense that
    \begin{align}\label{final-thm-no}
        \beta (a \triangleright u) = \alpha (a) \triangleright \beta (u) ~~~ \text{  and } ~~~ \beta (u \triangleleft a) = \beta (u) \triangleleft \alpha (a), \text{ for all } a \in A, u \in M
    \end{align}
    and the cohomology class $\mathcal{W} ((\beta, \alpha )) \in H^2_\mathrm{NAlg} ((A, N); (M, N_M))$ is trivial.
\end{thm}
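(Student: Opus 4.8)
The plan is to deduce the statement directly from Proposition \ref{pre-final-prop}, merely repackaging its two conditions (I) and (II) in terms of the Wells map $\mathcal{W}$. Condition (I) of Proposition \ref{pre-final-prop} is word-for-word the compatibility requirement (\ref{final-thm-no}), so nothing new needs to be proved there. It therefore remains to show that, once the pair $(\beta, \alpha)$ is compatible, the existence of a linear map $\lambda : A \rightarrow M$ satisfying the two equations (\ref{eqn-8}) and (\ref{eqn-9}) is equivalent to the vanishing of the cohomology class $\mathcal{W}((\beta, \alpha)) = \big[ (\chi_{(\beta, \alpha)}, F_{(\beta, \alpha)}) - (\chi, F) \big]$.

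The key step is a change of variables: given a candidate $\lambda$, I would set $g := \lambda \circ \alpha^{-1} \in \mathrm{Hom}(A, M)$. Since $\alpha$ is bijective this assignment is itself a bijection between linear maps $A \rightarrow M$, so the existence of $\lambda$ is equivalent to the existence of $g$. Substituting $a \mapsto \alpha^{-1}(a)$ and $b \mapsto \alpha^{-1}(b)$ in (\ref{eqn-8}) and using that $\alpha$ (hence $\alpha^{-1}$) is an associative algebra homomorphism, the left-hand side becomes $\chi_{(\beta, \alpha)}(a, b) - \chi(a, b)$ by the very definition of $\chi_{(\beta, \alpha)}$, while the right-hand side collapses to $a \triangleright g(b) + g(a) \triangleleft b - g(a \cdot b) = (\delta_\mathrm{Hoch} g)(a, b)$. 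Likewise, substituting $a \mapsto \alpha^{-1}(a)$ in (\ref{eqn-9}) and using that $\alpha$ commutes with $N$ (as $\alpha \in \mathrm{Aut}(A, N)$), so that $\lambda(N(\alpha^{-1}(a))) = g(N(a))$, the equation turns into $F_{(\beta, \alpha)}(a) - F(a) = N_M(g(a)) - g(N(a)) = - \partial^{N, N_M}(g)(a)$.

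Putting these two identities together says exactly that $(\chi_{(\beta, \alpha)}, F_{(\beta, \alpha)}) - (\chi, F) = \big( \delta_\mathrm{Hoch}(g), - \partial^{N, N_M}(g) \big) = \delta_\mathrm{NAlg}(g)$, i.e. that the difference is a $2$-coboundary and hence $\mathcal{W}((\beta, \alpha)) = 0$. Reading the computation backwards yields the converse: from any $g$ witnessing $\mathcal{W}((\beta, \alpha)) = 0$ one recovers $\lambda := g \circ \alpha$ satisfying (\ref{eqn-8}) and (\ref{eqn-9}). Combined with the identification of (I) with (\ref{final-thm-no}) and with Proposition \ref{pre-final-prop}, this proves the theorem; the preceding proposition guarantees that the class $\mathcal{W}((\beta, \alpha))$, and hence the criterion, is independent of the chosen section $s$, even though Proposition \ref{pre-final-prop} itself is phrased relative to a fixed $s$. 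There is no genuine obstacle beyond the careful bookkeeping with the insertions of $\alpha^{-1}$; the only structural facts that must be invoked are that $\alpha$ is an algebra automorphism and that it commutes with $N$, both of which hold because $\alpha \in \mathrm{Aut}(A, N)$.
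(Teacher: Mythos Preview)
Your proposal is correct and follows essentially the same approach as the paper's proof: both arguments reduce to Proposition \ref{pre-final-prop} and translate condition (II) into the coboundary condition $(\chi_{(\beta,\alpha)}, F_{(\beta,\alpha)}) - (\chi, F) = \delta_\mathrm{NAlg}(g)$ via the substitution $a \mapsto \alpha^{-1}(a)$, $b \mapsto \alpha^{-1}(b)$ and the change of variable $g = \lambda \circ \alpha^{-1}$ (equivalently $\lambda = g \circ \alpha$). The only cosmetic difference is that you package the two directions as a single equivalence, whereas the paper writes them out separately; the computations are identical.
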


\begin{proof}
    Let $(\beta, \alpha)$ be an inducible pair. Then by Proposition \ref{pre-final-prop}, we get that the identities in (\ref{final-thm-no}) are hold. Also, there exists a linear map $\lambda :A \rightarrow M$ satisfying (\ref{eqn-8}) and (\ref{eqn-9}). In these two identities, if we replace $a, b$ respectively by $\alpha^{-1} (a), \alpha^{-1} (b)$, we obtain
    \begin{align*}
        \chi_{(\beta, \alpha)} (a, b) - \chi (a, b) =~& a \triangleright \lambda \alpha^{-1} (b) + \lambda \alpha^{-1} (a) \triangleleft b - \lambda \alpha^{-1} (a \cdot b),\\
        F_{(\beta, \alpha)} (a) - F (a) =~& N_M (\lambda \alpha^{-1} (a)) - \lambda \alpha^{-1} (N(a)).
    \end{align*}
    This shows that $  ( \chi_{(\beta, \alpha)} , F_{(\beta, \alpha)} ) - (\chi, F) = \delta_\mathrm{NAlg} (\lambda \alpha^{-1})$ is a $2$-coboundary. Hence the cohomology class $\mathcal{W} ((\beta, \alpha)) = [   ( \chi_{(\beta, \alpha)} , F_{(\beta, \alpha)} ) - (\chi, F) ]$ is trivial.

    Conversely, suppose the identities in (\ref{final-thm-no}) are hold and $\mathcal{W} ((\beta, \alpha)) = 0$. Let $s$ be any section and $(\chi, F)$ be the $2$-cocycle corresponding to the given abelian extension induced by the section $s$. Since $\mathcal{W} ((\beta, \alpha)) = 0$, the $2$-cocycles $(\chi_{(\beta, \alpha)}, F_{(\beta, \alpha)})$ and $(\chi, F)$ are cohomologous, say $(\chi_{(\beta, \alpha)}, F_{(\beta, \alpha)}) - (\chi, F) = \delta_\mathrm{NAlg} (g)$, for some $g \in \mathrm{Hom} (A, M)$. Then it can be checked that the map $\lambda := g \alpha : A \rightarrow M$ satisfy the conditions (\ref{eqn-8}) and (\ref{eqn-9}). Hence by Proposition \ref{pre-final-prop}, the pair $(\beta, \alpha)$ is inducible. 
\end{proof}

Given an abelian extension of the Nijenhuis algebra $((A, ~ \! \cdot ~ \!), N)$ by the Nijenhuis bimodule $((M, \triangleright, \triangleleft), N_M)$, we define
\begin{align*}
C_{\triangleright, \triangleleft} = \{  (\beta, \alpha) \in \mathrm{Aut} (M, N_M) \times \mathrm{Aut} (A, N) ~ \! | ~    \substack{ \beta  (a ~ \! \triangleright ~ \! u) ~ \! = ~ \! \alpha
 (a) ~ \! \triangleright ~ \! \beta (u), \\
  \beta (u ~ \! \triangleleft ~ \! a) ~ \! = ~ \! \beta (u) ~ \! \triangleleft ~ \! \alpha (a),  } \text{ for all } a \in A, u \in M  \}.
\end{align*}
The space $C_{\triangleright, \triangleleft}$ is called the space of {\em compatible pairs} of Nijenhuis algebra automorphisms. Then $C_{\triangleright, \triangleleft}$ is a subgroup of $\mathrm{Aut} (M, N_M) \times \mathrm{Aut} (A, N)$ and it follows from the first part of the proof of Proposition \ref{pre-final-prop} that the image of the map $\tau$ lies inside $C_{\triangleright, \triangleleft}$, i.e. $\tau \big( \mathrm{Aut}_M (E, N_E) \big) \subset C_{\triangleright, \triangleleft}$. Moreover, by Theorem \ref{theorem-wells}, we get that a pair $(\beta, \alpha) \in \mathrm{Aut} (M, N_M) \times \mathrm{Aut} (A, N)$ of Nijenhuis algebra automorphisms is inducible if and only if $(\beta, \alpha) \in C_{\triangleright, \triangleleft}$ and $\mathcal{W} ((\beta, \alpha)) = 0$.

\medskip

Like classical cases, here we show that the Wells map defined above fits into a short exact sequence.

\begin{thm}
Let (\ref{ind-abel}) be an abelian extension of the Nijenhuis algebra $((A, ~ \! \cdot ~ \! ), N)$ by the Nijenhuis bimodule $((M, \triangleright, \triangleleft), N_M)$. Then there is an exact sequence (called the {\bf Wells exact sequence})
\begin{align}\label{wells-seq}
1 \rightarrow \mathrm{Aut}_{M, A} (E, N_E)
 \xrightarrow{\iota} \mathrm{Aut}_M (E, N_E) \xrightarrow{\tau} C_{\triangleright, \triangleleft} \xrightarrow{\mathcal{W}} H^2_\mathrm{NAlg} ((A, N); (M, N_M)),
\end{align}
where $\mathrm{Aut}_{M, A} (E, N_E) = \{ \varphi \in \mathrm{Aut} (E, N_E) ~ \! | ~ \! \tau (\varphi) = (\mathrm{Id}_M, \mathrm{Id}_A) \}.$
\end{thm}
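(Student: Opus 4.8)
The plan is to establish exactness at each of the three relevant positions, having first confirmed that $\iota$ and $\tau$ are group homomorphisms so that the first three terms form a genuine sequence of groups (the final map $\mathcal{W}$ being only a morphism of pointed sets into the abelian group $H^2_\mathrm{NAlg}$). The inclusion $\iota$ is manifestly an injective homomorphism, which gives exactness at $\mathrm{Aut}_{M,A}(E,N_E)$. For $\tau(\varphi) = (\varphi|_M, \overline{\varphi})$ the restriction component $\varphi \mapsto \varphi|_M$ is clearly multiplicative, so the one point I would check is that $\varphi \mapsto \overline{\varphi} = p\varphi s$ is multiplicative. I would compute $\overline{\varphi\psi}$ directly: for $a \in A$ one writes $\psi s(a) = s\overline{\psi}(a) + m$ with the correction term $m = (\psi s - s\overline{\psi})(a)$ lying in $M = \ker p$, and then uses $\varphi(M) \subset M$ together with $p|_M = 0$ to get $\overline{\varphi\psi}(a) = p\varphi\psi s(a) = p\varphi s\overline{\psi}(a) = (\overline{\varphi}\circ\overline{\psi})(a)$. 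Thus $\tau$ is a homomorphism, and its image lands inside $C_{\triangleright,\triangleleft}$ by the first part of the proof of Proposition \ref{pre-final-prop}.

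Exactness at $\mathrm{Aut}_M(E,N_E)$ would then follow immediately from the definitions: the kernel of $\tau$ is by definition the set of $\varphi$ with $\tau(\varphi) = (\mathrm{Id}_M, \mathrm{Id}_A)$, which is exactly the subgroup $\mathrm{Aut}_{M,A}(E,N_E) = \mathrm{im}\,\iota$.

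The essential content lies in exactness at $C_{\triangleright,\triangleleft}$, which I would read in the pointed-set sense as the identity $\mathrm{im}\,\tau = \mathcal{W}^{-1}(0)$. This is precisely a restatement of Theorem \ref{theorem-wells}. Indeed, a compatible pair $(\beta,\alpha) \in C_{\triangleright,\triangleleft}$ belongs to $\mathrm{im}\,\tau$ exactly when it is inducible, by the very definitions of inducibility and of $\tau$; and Theorem \ref{theorem-wells} states that for such a compatible pair, inducibility is equivalent to the vanishing of the class $\mathcal{W}((\beta,\alpha))$. Combining these two equivalences yields $(\beta,\alpha) \in \mathrm{im}\,\tau$ if and only if $\mathcal{W}((\beta,\alpha)) = 0$, which is the desired exactness.

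I expect the only genuinely delicate step to be the verification that $\tau$ is a group homomorphism, in particular the multiplicativity of the induced quotient map $\overline{\varphi}$, since everything else either is immediate from the definitions or is a direct appeal to Theorem \ref{theorem-wells}. Because $\mathcal{W}$ is in general not a homomorphism, I would also take care to phrase exactness at $C_{\triangleright,\triangleleft}$ as the equality of $\mathrm{im}\,\tau$ with the fiber $\mathcal{W}^{-1}(0)$ rather than as a kernel-versus-image statement.
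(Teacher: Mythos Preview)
Your proposal is correct and follows essentially the same route as the paper: exactness at the first two spots is immediate from the definitions, and exactness at $C_{\triangleright,\triangleleft}$ is deduced directly from Theorem \ref{theorem-wells}. You additionally verify that $\tau$ is a group homomorphism and are careful to interpret the last step in the pointed-set sense since $\mathcal{W}$ need not be a homomorphism; the paper's proof leaves both of these points implicit.
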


\begin{proof}
The sequence (\ref{wells-seq}) is exact in the first place as the inclusion map $\iota : \mathrm{Aut}_{M, A} (E, N_E) \rightarrow \mathrm{Aut}_M (E, N_E)$ is an injection. Next, to show that the sequence is exact in the second place, we first take an element $\varphi \in \mathrm{ker} (\tau)$. That is, $\varphi \in \mathrm{Aut}_M (E, N_E)$ with the property that $\varphi \big|_M = \mathrm{Id}_M$ and $\overline{\varphi} = \mathrm{Id}_A$. This shows that $\varphi \in \mathrm{Aut}_{M, A} (E, N_E)$. On the other hand, $\varphi \in \mathrm{Aut}_{M, A} (E, N_E)$ implies that $\varphi \in \mathrm{ker} (\tau)$. Therefore, we get that $\mathrm{ker} (\tau) = \mathrm{Aut}_{M, A} (E, N_E) = \mathrm{im} (\iota)$.

Finally, we take an element $(\beta, \alpha) \in C_{\triangleright, \triangleleft}$ with $\mathcal{W} ((\beta, \alpha)) = 0$. Then it follows from Theorem \ref{theorem-wells} that the pair $(\beta, \alpha)$ is inducible. Thus, there exists a Nijenhuis algebra automorphism $\varphi \in \mathrm{Aut}_M (E, N_E)$ such that $\tau (\varphi) = (\beta, \alpha)$. This shows that $(\beta, \alpha) \in \mathrm{im} (\tau).$ Conversely, if the pair $(\beta, \alpha) \in \mathrm{im} (\tau)$ then by definition the pair $(\beta
, \alpha)$ is inducible. Hence again by Theorem \ref{theorem-wells}, we get that $(\beta, \alpha) \in C_{\triangleright, \triangleleft}$ and $\mathcal{W} ((\beta, \alpha)) = 0$, i.e. $(\beta, \alpha) \in \mathrm{ker} (\mathcal{W})$. Hence $\mathrm{ker} (\mathcal{W}) = \mathrm{im} (\tau)$ which shows that the sequence is also exact in the third place. This concludes the proof.
\end{proof}

In the following, we show that the group $\mathrm{Aut}_{M, A} (E, N_E)$ that appears in the Wells exact sequence is isomorphic to the space $Z^1_\mathrm{NAlg} ((A, N); (M, N_M))$ of $1$-cocycles of the Nijenhuis algebra $((A, ~ \! \cdot ~ \!), N)$ with coefficients in the Nijenhuis bimodule $((M, \triangleright, \triangleleft), N_M)$. More precisely, we have the following result.

\begin{prop}
With the hypothesis of the above theorem, there is a group isomorphism
\begin{align*}
\mathrm{Aut}_{M, A} (E, N_E) ~ \! \cong ~ \! Z^1_\mathrm{NAlg} ((A, N); (M, N_M).
\end{align*}
As a consequence, the group $\mathrm{Aut}_{M, A} (E, N_E)$ is abelian.
\end{prop}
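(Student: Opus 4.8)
The plan is to construct an explicit bijection $\varphi \mapsto f_\varphi$ between $\mathrm{Aut}_{M,A}(E, N_E)$ and the space $Z^1_\mathrm{NAlg}((A,N); (M, N_M))$ of $1$-cocycles, and then to verify that it turns composition of automorphisms into addition of cocycles. First I would fix a section $s$ of $p$ and identify $E$ with $A \oplus M$, so that the extension is encoded by the associated $2$-cocycle $(\chi, F)$, i.e. $s(a) \cdot_E s(b) = s(a \cdot b) + \chi(a,b)$, $N_E s(a) = sN(a) + F(a)$, together with the semidirect relations $s(a) \cdot_E v = a \triangleright v$, $u \cdot_E s(b) = u \triangleleft b$ and $u \cdot_E v = 0$ for $u, v \in M$. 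Given $\varphi \in \mathrm{Aut}_{M,A}(E, N_E)$, the defining conditions $\varphi|_M = \mathrm{Id}_M$ and $\overline{\varphi} = p \varphi s = \mathrm{Id}_A$ force $\varphi(s(a)) - s(a) \in \ker p = M$ for every $a$; hence there is a unique linear map $f_\varphi \in \mathrm{Hom}(A, M)$ with $\varphi(s(a)) = s(a) + f_\varphi(a)$, and since $\varphi$ fixes $M$ pointwise, $\varphi$ is completely determined by $f_\varphi$ via $\varphi(s(a) + u) = s(a) + f_\varphi(a) + u$.

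The heart of the argument is to show that $\varphi$ being a Nijenhuis algebra automorphism is equivalent to $f_\varphi$ being a $1$-cocycle. Expanding the multiplicativity equation $\varphi(e \cdot_E e') = \varphi(e) \cdot_E \varphi(e')$ on elements $e = s(a) + u$, $e' = s(b) + v$ and cancelling the common $\chi$-contributions and bimodule terms, I expect to recover exactly the Hochschild $1$-cocycle identity $f_\varphi(a \cdot b) = a \triangleright f_\varphi(b) + f_\varphi(a) \triangleleft b$, i.e. $\delta_\mathrm{Hoch}(f_\varphi) = 0$. Comparing $\varphi N_E (s(a)+u)$ with $N_E \varphi(s(a)+u)$ and cancelling the common terms involving $F$ and $N_M$ should collapse to $f_\varphi \circ N = N_M \circ f_\varphi$, which is precisely $\partial^{N,N_M}(f_\varphi) = 0$. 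Together these give $\delta_\mathrm{NAlg}(f_\varphi) = (\delta_\mathrm{Hoch}(f_\varphi), -\partial^{N,N_M}(f_\varphi)) = 0$, so $f_\varphi \in Z^1_\mathrm{NAlg}((A,N); (M, N_M))$. Conversely, for any such cocycle $f$ the formula $\varphi(s(a)+u) = s(a) + f(a) + u$ defines a linear bijection, with inverse $s(a)+u \mapsto s(a) + u - f(a)$, which by reversing the above computations is a Nijenhuis algebra automorphism fixing $M$ and inducing $\mathrm{Id}_A$; this supplies the inverse assignment and so a bijection of sets.

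Finally I would check that $\varphi \mapsto f_\varphi$ is a group homomorphism from $(\mathrm{Aut}_{M,A}(E, N_E), \circ)$ to the additive group $(Z^1_\mathrm{NAlg}, +)$. For $\varphi_1, \varphi_2$ with cocycles $f_1, f_2$ one computes $(\varphi_1 \circ \varphi_2)(s(a)) = \varphi_1(s(a) + f_2(a)) = s(a) + f_1(a) + f_2(a)$, where the crucial point is that $\varphi_1|_M = \mathrm{Id}_M$, so $\varphi_1(f_2(a)) = f_2(a)$; hence $f_{\varphi_1 \circ \varphi_2} = f_1 + f_2$. This is the step I expect to be the real (though still mild) obstacle, since it is exactly where the restriction $\varphi|_M = \mathrm{Id}_M$ converts composition into addition. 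As $(Z^1_\mathrm{NAlg}, +)$ is abelian, the resulting group isomorphism immediately yields that $\mathrm{Aut}_{M,A}(E, N_E)$ is abelian, giving the stated consequence.
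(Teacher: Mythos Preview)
Your proposal is correct and follows essentially the same route as the paper: both fix a section $s$, associate to $\varphi$ the map $f_\varphi(a) = \varphi s(a) - s(a) \in M$, check that multiplicativity and $N_E$-compatibility of $\varphi$ translate into $\delta_\mathrm{Hoch}(f_\varphi)=0$ and $N_M f_\varphi = f_\varphi N$, construct the inverse by $s(a)+u \mapsto s(a)+f(a)+u$, and use $\varphi|_M=\mathrm{Id}_M$ to obtain $f_{\varphi_1\circ\varphi_2}=f_{\varphi_1}+f_{\varphi_2}$. The only cosmetic difference is that the paper verifies injectivity and surjectivity separately rather than packaging them as mutually inverse assignments.
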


\begin{proof}
Let $\varphi \in \mathrm{Aut}_{M, A} (E, N_E)$. Then for any $a \in A$, we observe that $p (( \varphi s - s ) (a)) = ( p \varphi s - ps ) (a) = 0$ which implies that $(\varphi s - s) (a) \in \mathrm{ ker} ( p ) = \mathrm{im } (i) \cong M$. We now define a map
$\widetilde{\varphi} : A \rightarrow M$ by
\begin{align*}
\widetilde{\varphi} (a) := (\varphi s - s ) (a) , \text{ for } a \in A.
\end{align*}
For any $a, b \in A$, we have
\begin{align*}
\widetilde{\varphi} (a \cdot b) =~& \varphi s (a \cdot b) - s (a \cdot b)\\
=~& \varphi \big(  s (a) \cdot_E s (b) - \chi (a, b)   \big) - \big(    s (a) \cdot_E s (b) - \chi (a, b)    \big)\\
=~& \varphi s (a) \cdot_E \varphi s (b) - s (a) \cdot_E s (b) \quad (\because ~ \! \varphi \big|_M = \mathrm{Id}_M) \\
=~& (\widetilde{\varphi} + s )(a) \cdot_E (\widetilde{\varphi} + s )(b) - s(a) \cdot_E s (b) \quad (\because ~ \! \varphi s = \widetilde{\varphi} + s)\\
=~& s (a) \cdot_E \widetilde{\varphi} (b) + \widetilde{\varphi} (a) \cdot_E s(b) = a \triangleright \widetilde{\varphi} (b) + \widetilde{\varphi} (a) \triangleleft b
\end{align*}
and
\begin{align*}
(N_M \circ \widetilde{\varphi} - \widetilde{\varphi} \circ N) (a) =~& N_M ( \widetilde{\varphi} (a)) - (\varphi s N (a) - sN (a))\\
=~& N_M ( \widetilde{\varphi} (a)) - \varphi (N_E s - F) (a) + (N_E s - F)(a) \quad (\because ~ \! F = N_E s - sN)\\
 =~& N_M ( \widetilde{\varphi} (a)) - N_E (\varphi s (a)) + N_E s (a) \quad (\because  ~ \! \varphi \big|_M = \mathrm{Id}_M \text{ and } \varphi N_E = N_E \varphi)\\
 =~& N_M  ( \widetilde{\varphi} (a)) - N_E (\varphi s - s ) (a) = N_M  ( \widetilde{\varphi} (a))- N_M (\widetilde{\varphi} (a)) = 0.
\end{align*}
This shows that $\widetilde{\varphi} \in Z^1_\mathrm{NAlg} ((A, N); (M, N_M))$. Finally, for any $\varphi, \psi \in \mathrm{Aut}_{M , A} (E, N_E)$,
\begin{align*}
\widetilde{\varphi \psi} = \varphi \psi s - s = \varphi s - s + \varphi (\psi s - s) = \widetilde{\varphi} + \varphi \widetilde{\psi} = \widetilde{ \varphi} + \widetilde{\psi} \quad (\because ~ \mathrm{im}(\widetilde{ \psi} ) \subset M \text{ and } \varphi \big|_M = \mathrm{Id}_M)
\end{align*}
which implies that the map $\widetilde{ ( ~)} : \mathrm{Aut}_{M, A} (E, N_E) \rightarrow Z^1_\mathrm{NAlg} ((A, N); (M, N_M))$ is a group homomorphism.

Next, we claim that the map $\widetilde{ ( ~)}$ is bijective. First, we take an element $\varphi \in \mathrm{Aut}_{M, A} (E, N_E)$ with $\widetilde{\varphi} = 0$. Then for any $e = s(a) + u \in E$, we observe that
\begin{align*}
\varphi (e) = \varphi (s (a) + u) = (\varphi s - s ) (a) + s (a) + \varphi (u) = s (a) + u = e \quad (\because ~ \! \widetilde{\varphi} = 0 \text{ and } \varphi \big|_M = \mathrm{Id}_M)
\end{align*}
which implies that $\varphi = \mathrm{Id}_E$. This shows that the map $\widetilde{ ( ~)}$ is injective. Next, to show that it is surjective, we take an arbitrary element $d \in Z^1_\mathrm{NAlg} ((A, N); (M, N_M))$. Hence $d : A \rightarrow M$ is a linear map satisfying the usual derivation property and $N_M \circ d = d \circ N$. We now define a map $\underline{d} : E \rightarrow E$ by
\begin{align*}
\underline{d} (e) = \underline{d} (s (a) + u) := s (a) + (u + d (a)), \text{ for } e =  s(a) + u \in E.
\end{align*}
Then it is easy to see that the map $\underline{d}$ is bijective and satisfies additionally $\underline{d} (e \cdot_E e') = e \cdot_E \underline{d} (e') + \underline{d}(e) \cdot_E e'$ and $N_E \circ \underline{d} = \underline{d} \circ N_E$, for all $e, e' \in E$. Hence $\underline{d} \in \mathrm{Aut}(E, N_E)$. Moreover, for all $a \in A$, $u \in M$, we have
\begin{align*}
p \underline{d} s (a) = p ( s(a) + d (a)) = ps (a) = a ~~~~ \text{ and } ~~~~ \underline{d} (u) = \underline{d} (s (0) + u) = u.
\end{align*}
Hence $\tau (\underline{d}) = (\underline{d} \big|_M , p \underline{d} s) = (\mathrm{Id}_M, \mathrm{Id}_A)$ which in turn implies that $\underline{d} \in \mathrm{Aut}_{M, A} (E, N_E)$. Finally, 
\begin{align*}
\widetilde{ (\underline{d})} (a) = (\underline{d}s - s)(a) = s(a) + d(a) - s(a) = d(a), \text{ for all } a \in A. 
\end{align*}
This shows that $\widetilde{ (\underline{d})} = d$ proving that the map $\widetilde{ (~)}$ is surjective. Hence the proof follows.
\end{proof}

\begin{corollary}
Given the above proposition, the Wells exact sequence can be alternatively read as an exact sequence
\begin{align*}
0 \rightarrow Z^1_\mathrm{NAlg} ((A, N); (M, N_M)) \rightarrow \mathrm{Aut}_M (E, N_E) \xrightarrow{\tau} C_{\triangleright, \triangleleft} \xrightarrow{ \mathcal{W}} H^2_\mathrm{NAlg} ((A, N); (M, N_M)).
\end{align*}
\end{corollary}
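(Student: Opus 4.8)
The plan is to read off the corollary from the Wells exact sequence (\ref{wells-seq}) by substituting the isomorphism furnished by the preceding proposition for its left-most term. First I would recall that the preceding proposition provides a group isomorphism $\widetilde{ ( ~)} : \mathrm{Aut}_{M, A} (E, N_E) \rightarrow Z^1_\mathrm{NAlg} ((A, N); (M, N_M))$, given by $\varphi \mapsto \varphi s - s$, where the codomain is viewed as an abelian group under pointwise addition of $1$-cocycles; this is precisely why the trivial group at the start of the sequence is now written additively as $0$ rather than multiplicatively as $1$.

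Next I would transport the sequence (\ref{wells-seq}) along this isomorphism. Replacing $\mathrm{Aut}_{M, A} (E, N_E)$ by its isomorphic copy $Z^1_\mathrm{NAlg} ((A, N); (M, N_M))$ converts the inclusion $\iota$ into the composite $Z^1_\mathrm{NAlg} ((A, N); (M, N_M)) \xrightarrow{\ \widetilde{ ( ~)}^{-1}\ } \mathrm{Aut}_{M, A} (E, N_E) \xrightarrow{\ \iota\ } \mathrm{Aut}_M (E, N_E)$, which by the surjectivity argument in the proof of that proposition is precisely the map $d \mapsto \underline{d}$ sending a cocycle $d$ to the automorphism $\underline{d} (s(a) + u) = s(a) + (u + d(a))$. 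Since composing with an isomorphism alters neither kernels nor images, this new left-most arrow remains injective and its image still equals $\mathrm{im} (\iota) = \mathrm{ker} (\tau)$, so exactness at the first two positions is preserved.

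Finally I would observe that the tail $\mathrm{Aut}_M (E, N_E) \xrightarrow{\tau} C_{\triangleright, \triangleleft} \xrightarrow{\mathcal{W}} H^2_\mathrm{NAlg} ((A, N); (M, N_M))$ of the sequence is untouched by this substitution, so exactness at $\mathrm{Aut}_M (E, N_E)$ and at $C_{\triangleright, \triangleleft}$ is inherited verbatim from (\ref{wells-seq}); assembling the pieces yields the claimed exact sequence. I do not anticipate any genuine obstacle here: the single point meriting a word of care is the purely formal observation that transporting an exact sequence along an isomorphism of one of its terms preserves exactness, which is immediate since isomorphisms respect kernels and images.
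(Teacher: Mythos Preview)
Your proposal is correct and matches the paper's approach: the corollary is stated without proof in the paper, as it follows immediately from substituting the group isomorphism $\mathrm{Aut}_{M, A} (E, N_E) \cong Z^1_\mathrm{NAlg} ((A, N); (M, N_M))$ of the preceding proposition into the left-most nontrivial term of the Wells exact sequence (\ref{wells-seq}). Your explicit identification of the resulting map $d \mapsto \underline{d}$ and your remark that transporting along an isomorphism preserves exactness are more detail than the paper provides, but they are accurate and constitute exactly the intended argument.
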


\medskip

\section{Skeletal and strict 2-term homotopy Nijenhuis algebras and homotopy NS-algebras}\label{sec6}
In this section, we first introduce homotopy Nijenhuis operators on $2$-term $A_\infty$-algebras. Next, we consider skeletal and strict $2$-term homotopy Nijenhuis algebras and characterize them by third cocycles of Nijenhuis algebras and crossed modules of Nijenhuis algebras, respectively. Finally, we also consider strict homotopy Nijenhuis operators on arbitrary $A_\infty$-algebras and show that such operators induce $NS_\infty$-algebras.

\begin{defn} \cite{keller}
An {\bf $A_\infty$-algebra} is a pair $(\mathcal{A}, \{ \mu_n \}_{n \geq 1})$ consisting of a graded vector space $\mathcal{A} = \oplus_{i \in \mathbb{Z}} \mathcal{A}_i$ equipped with a collection $\{ \mu_n : \mathcal{A}^{\otimes n} \rightarrow \mathcal{A} \}_{n \geq 1}$ of graded linear maps with $\mathrm{deg} (\mu_n) = n-2$ (for $n \geq 1$) that satisfy the following set of identities:

For each $k \in \mathbb{N}$ and homogeneous elements $a_1, \ldots, a_k \in \mathcal{A}$,
\begin{align}\label{ainf-iden}
\sum_{m+n = k+1} \sum_{i=1}^m (-1)^{ i (n+1) + n ( |a_{1}| + \cdots + |a_{i-1}|)} ~ \! \mu_m \big(  a_1, \ldots, a_{i-1}, \mu_n (a_i, \ldots, a_{i+n-1}), a_{i+n}, \ldots, a_k  \big) = 0.
\end{align}
\end{defn}

It is easy to observe that a differential graded associative algebra is an $A_\infty$-algebra $(\mathcal{A}, \{ \mu_n \}_{n \geq 1})$ for which $\mu_n = 0$, $n \geq 3$. Additionally, if $\mu_1 = 0$, one obtains a graded associative algebra.

An $A_\infty$-algebra whose underlying graded vector space $\mathcal{A}$ is concentrated in arities $0$ and $1$ is called a $2$-term $A_\infty$-algebra. The explicit description is given in the following definition.

\begin{defn}
    A {\bf $2$-term $A_\infty$-algebra} $ ( \mathcal{A}_1 \xrightarrow{ \partial} \mathcal{A}_0, \mu_2, \mu_3)$ is a $2$-term chain complex $\mathcal{A}_1 \xrightarrow{\partial} \mathcal{A}_0$ endowed with bilinear maps $\mu_2 : \mathcal{A}_i \times \mathcal{A}_j \rightarrow \mathcal{A}_{i+j}$ (for $0 \leq i, ~ \! j , ~ \! i+j \leq 1$) and a trilinear map $\mu_3 : \mathcal{A}_0 \times \mathcal{A}_0 \times \mathcal{A}_0 \rightarrow \mathcal{A}_1$ subject to satisfy the following set of identities: for $a, b, c, d \in \mathcal{A}_0$ and $u, v \in \mathcal{A}_1$,
    \begin{align}
        \partial \mu_2 (a, u) =~& \mu_2 (a, \partial u), \label{ainf-1}\\
        \partial \mu_2 (u, a ) =~& \mu_2 (\partial u, a), \label{ainf-2}\\
        \mu_2 (\partial u, v ) =~& \mu_2 (u, \partial v), \label{ainf-3}\\
        \partial \mu_3 (a, b, c) =~& \mu_2 (\mu_2 (a, b), c) - \mu_2 (a, \mu_2 (b, c)), \label{ainf-4}\\
        \mu_3 (a, b, \partial u) =~& \mu_2 (\mu_2 (a, b), u) - \mu_2 (a, \mu_2 (b, u)), \label{ainf-5}\\
        \mu_3 (a, \partial u, b) =~& \mu_2 (\mu_2 (a, u), b) - \mu_2 (a, \mu_2 (u, b)), \label{ainf-6}\\
        \mu_3 (\partial u, a, b) =~& \mu_2 ( \mu_2 (u, a), b) - \mu_2 (u, \mu_2 (a, b)), \label{ainf-7}\\
        \mu_2 (a, \mu_3 (b, c, d)) - \mu_3 (\mu_2 (a, b), c, d) ~+& ~ \mu_3 (a, \mu_2 (b, c), d) - \mu_3 (a, b, \mu_2 (c, d)) + \mu_2 (\mu_3 (a, b, c), d) = 0. \label{ainf-8}
    \end{align}
\end{defn}

A $2$-term $A_\infty$-algebra $ ( \mathcal{A}_1 \xrightarrow{ \partial} \mathcal{A}_0, \mu_2, \mu_3)$ is said to be 
{\em skeletal} if $\partial = 0$ and
{\em strict} if $\mu_3 = 0$. It is well-known that skeletal $2$-term $A_\infty$-algebras are closely related to third Hochschild cocycles of associative algebras and strict $2$-term $A_\infty$-algebras are intimately related to crossed modules of associative algebras.

\begin{defn}
    (i) Let $( \mathcal{A}_1 \xrightarrow{ \partial} \mathcal{A}_0, \mu_2, \mu_3)$ be a $2$-term $A_\infty$-algebra. A {\bf homotopy Nijenhuis operator} on $( \mathcal{A}_1 \xrightarrow{ \partial} \mathcal{A}_0, \mu_2, \mu_3)$ is a triple $(\mathcal{N}_0, \mathcal{N}_1, \mathcal{N}_2)$ consisting of (bi)linear maps
        \begin{align*}
            \mathcal{N}_0 : \mathcal{A}_0 \rightarrow \mathcal{A}_0 , \quad  \mathcal{N}_1 : \mathcal{A}_1 \rightarrow \mathcal{A}_1 ~~~~ \text{ and } ~~~~ \mathcal{N}_2 : \mathcal{A}_0 \times \mathcal{A}_0 \rightarrow \mathcal{A}_1
        \end{align*}
        that satisfy the following list of identities: for all $a, b, c \in \mathcal{A}_0$ and $u \in \mathcal{A}_1$,
        \begin{align}
            \partial \circ \mathcal{N}_1 = \mathcal{N}_0 \circ \partial, \qquad \qquad \qquad \qquad \qquad & \label{infnij-1}\\
            \mathcal{N}_0 \big(  \mu_2 ( \mathcal{N}_0 (a), b) + \mu_2 (a, \mathcal{N}_0 (b)) - \mathcal{N}_0 (\mu_2 (a, b))    \big) - \mu_2 ( \mathcal{N}_0 (a) , \mathcal{N}_0 (b)) =~& \partial \big(  \mathcal{N}_2 (a, b)  \big), \label{infnij-2}\\
            \mathcal{N}_1 \big(  \mu_2 (\mathcal{N}_0 (a), u ) + \mu_2 (a, \mathcal{N}_1 (u)) - \mathcal{N}_1 (\mu_2 (a, u))   \big) - \mu_2 (\mathcal{N}_0 (a), \mathcal{N}_1 (u) ) =~& \mathcal{N}_2 (a, \partial u), \label{infnij-3}\\
            \mathcal{N}_1 \big(  \mu_2 (\mathcal{N}_1 (u), a ) + \mu_2 (u, \mathcal{N}_0 (a)) - \mathcal{N}_1 (\mu_2 (u,a))   \big) - \mu_2 (\mathcal{N}_1 (u), \mathcal{N}_0 (a) ) =~& \mathcal{N}_2 ( \partial u, a), \label{infnij-4}
\end{align}            
           \begin{align} 
           \mu_2 \big( \mathcal{N}_0 (a), \mathcal{N}_2 (b, c)   \big) ~ - ~& \mu_2 \big(  \mathcal{N}_2 (a, b), \mathcal{N}_0 (c)   \big) - \mathcal{N}_2 \big( \mu_2 ( \mathcal{N}_0 (a), b) + \mu_2 (a, \mathcal{N}_0 (b)) - \mathcal{N}_0 ( \mu_2 (a, b)) ~ \! , ~ \! c    \big) \label{infnij-5} \\
          & + \mathcal{N}_2 \big( a ~ \!, ~ \! \mu_2 ( \mathcal{N}_0 (b), c) + \mu_2 (b, \mathcal{N}_0 (c)) - \mathcal{N}_0 ( \mu_2 (b, c)) \big) \nonumber \\
           - \mathcal{N}_1  \big(  \mu_2 & (a, \mathcal{N}_2 (b, c)) - \mathcal{N}_2 ( \mu_2 (a, b), c) + \mathcal{N}_2 (a, \mu_2 (b, c)) - \mu_2 (\mathcal{N}_2 (a, b) , c)     \big) \nonumber \\
  = \mu_3 \big( \mathcal{N}_0 (a), \mathcal{N}_0 (b), \mathcal{N}_0 (c)  \big) - & \mathcal{N}_1 \mu_3 \big(  \mathcal{N}_0 (a), \mathcal{N}_0 (b) , c  \big) - \mathcal{N}_1 \mu_3 \big( \mathcal{N}_0 (a), b, \mathcal{N}_0 (c)    \big) - \mathcal{N}_1 \mu_3 \big(  a, \mathcal{N}_0 (b), \mathcal{N}_0 (c)  \big) \nonumber \\
 + \mathcal{N}_1^2 \mu_3 \big(  \mathcal{N}_0 (a),& b, c  \big) + \mathcal{N}_1^2 \mu_3 \big(  a, \mathcal{N}_0 (b), c  \big) + \mathcal{N}_1^2 \mu_3 \big(  a, b, \mathcal{N}_0 (c)  \big) - \mathcal{N}_1^3 \mu_3 (a, b, c). \nonumber
        \end{align}

(ii) A pair $(  ( \mathcal{A}_1 \xrightarrow{ \partial} \mathcal{A}_0, \mu_2, \mu_3), (\mathcal{N}_0, \mathcal{N}_1, \mathcal{N}_2))$ consisting of a $2$-term $A_\infty$-algebra $ ( \mathcal{A}_1 \xrightarrow{ \partial} \mathcal{A}_0, \mu_2, \mu_3)$ with a homotopy Nijenhuis operator  $(\mathcal{N}_0, \mathcal{N}_1, \mathcal{N}_2)$ is called a {\bf $2$-term homotopy Nijenhuis algebra}.
\end{defn}

A $2$-term homotopy Nijenhuis algebra $(  ( \mathcal{A}_1 \xrightarrow{ \partial} \mathcal{A}_0, \mu_2, \mu_3), (\mathcal{N}_0, \mathcal{N}_1, \mathcal{N}_2))$ is said to be 
\begin{itemize}
    \item {\em skeletal} if the underlying $2$-term $A_\infty$-algebra is skeletal (i.e. $\partial = 0$),
    \item {\em strict} if the underlying $2$-term $A_\infty$-algebra is strict (i.e. $\mu_3 = 0$) and $\mathcal{N}_2 = 0$.
\end{itemize}

\begin{thm}
    There is a 1-1 correspondence between skeletal $2$-term homotopy Nijenhuis algebras and third cocycles of Nijenhuis algebras with coefficients in Nijenhuis bimodules.
\end{thm}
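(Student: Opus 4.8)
The plan is to build an explicit dictionary between the two structures and then check that, once the skeletal hypothesis $\partial = 0$ is imposed, each defining identity on one side is literally a defining identity on the other. First I would fix a skeletal $2$-term homotopy Nijenhuis algebra $((\mathcal{A}_1 \xrightarrow{0} \mathcal{A}_0, \mu_2, \mu_3), (\mathcal{N}_0, \mathcal{N}_1, \mathcal{N}_2))$ and set $A := \mathcal{A}_0$, $M := \mathcal{A}_1$, with actions $a \triangleright u := \mu_2(a, u)$ and $u \triangleleft a := \mu_2(u, a)$. With $\partial = 0$ the identities (\ref{ainf-1})--(\ref{ainf-3}) become vacuous; (\ref{ainf-4}) collapses to associativity of $\mu_2$ on $A$; and (\ref{ainf-5})--(\ref{ainf-7}) become exactly the three $A$-bimodule axioms of Example \ref{exam-rrb} for $(M, \triangleright, \triangleleft)$. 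Thus $(A, \mu_2)$ is an associative algebra and $(M, \triangleright, \triangleleft)$ is an $A$-bimodule.

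Next I would read off the Nijenhuis data. Since $\partial = 0$, identity (\ref{infnij-1}) is vacuous and the right-hand side $\partial(\mathcal{N}_2(a,b))$ of (\ref{infnij-2}) vanishes, so (\ref{infnij-2}) becomes precisely the statement that $N := \mathcal{N}_0$ is a Nijenhuis operator on $A$. Likewise the right-hand sides $\mathcal{N}_2(a, \partial u)$ and $\mathcal{N}_2(\partial u, a)$ of (\ref{infnij-3}) and (\ref{infnij-4}) vanish, and these two identities turn into the left and right compatibility conditions of a Nijenhuis bimodule, so that $((M, \triangleright, \triangleleft), N_M)$ with $N_M := \mathcal{N}_1$ is a Nijenhuis bimodule over $(A, N)$.

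The crux is identity (\ref{infnij-5}). I would expand the coboundary $d_{N, N_M}$ from (\ref{dnnm-2}) at $n = 2$ applied to $\mathcal{N}_2 \in \mathrm{Hom}(A^{\otimes 2}, M)$ and compare it term-by-term with the left-hand side of (\ref{infnij-5}): once $\partial = 0$ has removed the terms involving $\partial u$, the two $\mu_2$-terms, the two inner $\mathcal{N}_2$-terms, and the final $N_M$-term coincide on the nose, giving $(d_{N, N_M}\mathcal{N}_2)(a,b,c)$. Symmetrically, expanding $\partial^{N, N_M}$ at $n = 3$ applied to $\mu_3$ reproduces exactly the right-hand side of (\ref{infnij-5}): the alternating sum over the subsets of arguments to which $N$ is applied, weighted by the matching powers $\mathcal{N}_1, \mathcal{N}_1^2, \mathcal{N}_1^3$, matches $\partial^{N,N_M}(\mu_3)(a,b,c)$. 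Hence (\ref{infnij-5}) is equivalent to $d_{N, N_M}(\mathcal{N}_2) - \partial^{N, N_M}(\mu_3) = 0$. Combined with (\ref{ainf-8}), which is nothing but the Hochschild $3$-cocycle condition $\delta_\mathrm{Hoch}(\mu_3) = 0$, this shows that the pair $(\mu_3, \mathcal{N}_2) \in C^3_\mathrm{Hoch}(A; M) \oplus C^2(N; N_M) = C^3_\mathrm{NAlg}((A, N); (M, N_M))$ satisfies $\delta_\mathrm{NAlg}(\mu_3, \mathcal{N}_2) = (\delta_\mathrm{Hoch}(\mu_3), d_{N, N_M}(\mathcal{N}_2) - \partial^{N, N_M}(\mu_3)) = 0$, so it is a $3$-cocycle of the Nijenhuis algebra with coefficients in the Nijenhuis bimodule.

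For the converse I would run the dictionary backwards: given a Nijenhuis algebra $(A, N)$, a Nijenhuis bimodule $((M, \triangleright, \triangleleft), N_M)$, and a $3$-cocycle $(\chi, F) \in Z^3_\mathrm{NAlg}((A, N); (M, N_M))$, I set $\mathcal{A}_0 = A$, $\mathcal{A}_1 = M$, $\partial = 0$, define $\mu_2$ from the product and the two actions, and put $\mu_3 = \chi$, $\mathcal{N}_0 = N$, $\mathcal{N}_1 = N_M$, $\mathcal{N}_2 = F$; the computations above then run in reverse to verify all of (\ref{ainf-1})--(\ref{ainf-8}) and (\ref{infnij-1})--(\ref{infnij-5}). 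Since the two assignments are visibly mutually inverse, this yields the claimed $1$--$1$ correspondence. The only genuinely delicate bookkeeping is the term-by-term matching in the previous paragraph, where one must track the sign $(-1)^n$ (here $n=3$) in the second slot of $\delta_\mathrm{NAlg}$ so that it correctly produces the minus sign in $d_{N, N_M}(\mathcal{N}_2) - \partial^{N, N_M}(\mu_3)$; every other identification is direct rewriting.
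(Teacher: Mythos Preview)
Your proof is correct and follows essentially the same approach as the paper: both set up the dictionary $A=\mathcal{A}_0$, $M=\mathcal{A}_1$, $N=\mathcal{N}_0$, $N_M=\mathcal{N}_1$, read off the associative, bimodule, Nijenhuis and Nijenhuis-bimodule structures from (\ref{ainf-4})--(\ref{ainf-7}) and (\ref{infnij-2})--(\ref{infnij-4}) after setting $\partial=0$, and then identify (\ref{ainf-8}) with $\delta_\mathrm{Hoch}(\mu_3)=0$ and (\ref{infnij-5}) with $d_{N,N_M}(\mathcal{N}_2)=\partial^{N,N_M}(\mu_3)$ to conclude that $(\mu_3,\mathcal{N}_2)$ is a $3$-cocycle, with the converse and mutual-inverse verification done in the same way. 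Your write-up is in fact slightly more explicit than the paper's about which identities become vacuous and about the sign $(-1)^3$ in $\delta_\mathrm{NAlg}$, but there is no substantive difference in strategy.
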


\begin{proof}
    Let $(  ( \mathcal{A}_1 \xrightarrow{ \partial = 0} \mathcal{A}_0, \mu_2, \mu_3), (\mathcal{N}_0, \mathcal{N}_1, \mathcal{N}_2))$ be a skeletal $2$-term homotopy Nijenhuis algebra. Since $\partial = 0$, it follows from (\ref{ainf-4}) and (\ref{infnij-2}) that $( (\mathcal{A}_0, \mu_2), \mathcal{N}_0)$ is a Nijenhuis algebra. On the other hand, from the identities (\ref{ainf-5})-(\ref{ainf-7}) and (\ref{infnij-3})-(\ref{infnij-4}), we get that $( (\mathcal{A}_1, \triangleright = \triangleleft = \mu_2) , \mathcal{N}_1)$ is a Nijenhuis bimodule over the Nijenhuis algebra $( (\mathcal{A}_0, \mu_2), \mathcal{N}_0)$. Finally, the identity (\ref{ainf-8}) simply says that $(\delta_\mathrm{Hoch} \mu_3) (a, b, c, d) = 0$ while the identity (\ref{infnij-5}) can be equivalently expressed as $ d_{\mathcal{N}_0, \mathcal{N}_1} (\mathcal{N}_2) (a, b, c) = \partial^{\mathcal{N}_0, \mathcal{N}_1} (\mu_3) (a, b, c)$. Here $\delta_\mathrm{Hoch}$ is the Hochschild coboundary operator of the associative algebra $(\mathcal{A}_0, \mu_2)$ with coefficients in the bimodule $(\mathcal{A}_1, \triangleright = \triangleleft = \mu_2)$. Hence 
    \begin{align*}
        \delta_\mathrm{NAlg} (\mu_3, \mathcal{N}_2) = \big( \delta_\mathrm{Hoch} (\mu_3) ~ \! , ~ \!     d_{\mathcal{N}_0, \mathcal{N}_1} (\mathcal{N}_2) - \partial^{\mathcal{N}_0, \mathcal{N}_1} (\mu_3) \big) = 0
    \end{align*}
    which shows that $(\mu_3, \mathcal{N}_2) \in {Z}^3_\mathrm{NAlg} ( (\mathcal{A}_0, \mathcal{N}_0); ( \mathcal{A}_1, \mathcal{N}_1))$ is a $3$-cocycle. Hence we obtain a Nijenhuis algebra, a Nijenhuis bimodule over it and a $3$-cocycle of the Nijenhuis algebra with coefficients in the Nijenhuis bimodule.

    Conversely, let $((A, ~ \! \cdot ~ \!), N)$ be a Nijenhuis algebra, $( (M, \triangleright, \triangleleft), N_M )$ be a Nijenhuis bimodule and a $3$-cocycle $(\chi, F) \in Z^3_\mathrm{NAlg} ((A, N); (M, N_M))$. Then it is easy to check that
    \begin{align*}
        \big(  (M \xrightarrow{\partial = 0} A, \mu_2, \mu_3 = \chi), (N, N_M, F)  \big)
    \end{align*}
    is a skeletal $2$-term homotopy Nijenhuis algebra, where the map $\mu_2$ is given by
    \begin{align*}
        \mu_2 (a, b) = a \cdot b, \quad \mu_2 (a, u) = a \triangleright u ~~~~ \text{ and } ~~~~ \mu_2 (u, a) = u \triangleleft a, \text{ for } a, b \in A \text{ and } u \in A.
    \end{align*}
    The above two constructions are inverses to each other. Hence the proof follows.
\end{proof}

To obtain a characterization result for strict $2$-term homotopy Nijenhuis algebras, here we first consider the notion of crossed modules of Nijenhuis algebras.

\begin{defn}
    A {\bf crossed module of Nijenhuis algebras} is a tuple $\big(  ((A, ~ \! \cdot ~ \!), N), ((A_1, ~ \! \cdot_1 ~ \!), N_1), \varphi, \triangleright, \triangleleft  \big)$ consisting of Nijenhuis algebras $((A, ~ \! \cdot ~ \!), N)$ and $((A_1, ~ \! \cdot_1 ~ \!), N_1)$ together with a homomorphism $\varphi : A_1 \rightarrow A$ of Nijenhuis algebras  and two bilinear maps $\triangleright : A \times A_1 \rightarrow A_1$ and $\triangleleft : A_1 \times A \rightarrow A_1$ such that
    \begin{itemize}
        \item[(i)] $( (A_1, \triangleright, \triangleleft), N_1)$ is a Nijenhuis bimodule over the Nijenhuis algebra $((A, ~ \! \cdot ~ \!), N)$,
        \item[(ii)] the following compatibility conditions also hold: 
        \begin{align}
           & a \triangleright  (u \cdot_1 v) = (a \triangleright u) \cdot_1 v, \qquad (u \triangleleft a) \cdot_1 v = u \cdot_1 (a \triangleright v), \qquad (u \cdot_1 v) \triangleleft a = u \cdot_1 (v \triangleleft a), \label{crossed1}\\
           & \qquad  \varphi (a \triangleright u) = a \cdot \varphi (u), \qquad \varphi (u \triangleleft a) = \varphi (u) \cdot a, \qquad \varphi (u ) \triangleright v = u \cdot_1 v = u \triangleleft \varphi (v), \label{crossed2}
        \end{align}
        for all $a \in A$ and $u, v \in A_1$.
    \end{itemize}
\end{defn}

\begin{exam}
Let $((A, ~ \! \cdot ~ \!), N)$ be any Nijenhuis algebra. Then $ \big( ((A, ~ \! \cdot ~ \!), N), ((A, ~ \! \cdot ~ \!), N), \varphi = \mathrm{Id}_A, \triangleright_\mathrm{ad}, \triangleleft_\mathrm{ad} \big)$ is a crossed module of Nijenhuis algebras.
\end{exam}

\begin{exam}
Let $( (A, ~ \! \cdot ~ \!), (A_1, ~ \! \cdot_1 ), \varphi, \triangleright, \triangleleft )$ be a crossed module of associative algebras. 
\begin{itemize}
\item[(i)] Then the tuple $\big( ((A, ~ \! \cdot ~ \!), N=\mathrm{Id}_A), ((A_1, ~ \! \cdot_1 ), N_1 = \mathrm{Id}_{A_1}), \varphi, \triangleright, \triangleleft \big)$ is obviously a crossed module of Nijenhuis algebras.
\item[(ii)] For any $u \in A_1$, consider the left multiplications $l_{ \varphi (u)} : A \rightarrow A$, $a \mapsto \varphi (u) \cdot a$ and $l_{1, u} : A_1 \rightarrow A_1$, $v \mapsto u \cdot_1 v$. We have seen that they are Nijenhuis operators on the respective algebras. Then the tuple $   ( ((A, ~ \! \cdot ~ \!),   l_{ \varphi (u)}  ), ((A_1, ~ \! \cdot_1 ), l_{1, u} ), \varphi, \triangleright, \triangleleft )$ is a crossed module of Nijenhuis algebras.
\end{itemize}
\end{exam}

\begin{exam}
Let $\big(  ((A, ~ \! \cdot ~ \!), N), ((A_1, ~ \! \cdot_1 ~ \!), N_1), \varphi, \triangleright, \triangleleft  \big)$ be any crossed module of Nijenhuis algebras. Then for any $k \geq 0$, the tuple $\big(  ((A, ~ \! \cdot ~ \!), N^k), ((A_1, ~ \! \cdot_1 ~ \!), N_1^k), \varphi, \triangleright, \triangleleft  \big)$ is also a crossed module of Nijenhuis algebras.
\end{exam}

We are now ready to prove the characterization result for strict $2$-term homotopy Nijenhuis algebras.

\begin{thm}
    There is a 1-1 correspondence between strict $2$-term homotopy Nijenhuis algebras and crossed modules of Nijenhuis algebras.
\end{thm}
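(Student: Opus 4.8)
The plan is to exhibit explicit constructions in both directions and check that they are mutually inverse, with essentially every verification reducing to the defining identities already listed for the two structures. I would begin with the direction \emph{from a strict $2$-term homotopy Nijenhuis algebra to a crossed module}. Starting from a strict $2$-term homotopy Nijenhuis algebra $( ( \mathcal{A}_1 \xrightarrow{ \partial} \mathcal{A}_0, \mu_2, \mu_3 = 0), (\mathcal{N}_0, \mathcal{N}_1, \mathcal{N}_2 = 0))$, I would set $A := \mathcal{A}_0$ and $A_1 := \mathcal{A}_1$, take $\varphi := \partial$, $N := \mathcal{N}_0$, $N_1 := \mathcal{N}_1$, and define the actions by $a \triangleright u := \mu_2(a, u)$ and $u \triangleleft a := \mu_2(u, a)$. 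The essential subtlety is that, since $\mu_2(\mathcal{A}_1, \mathcal{A}_1) \subseteq \mathcal{A}_2 = 0$, the multiplication on $A_1$ is \emph{not} visible in $\mu_2$; instead I would recover it by $u \cdot_1 v := \mu_2(\partial u, v)$, which is symmetric in the sense $\mu_2(\partial u, v) = \mu_2(u, \partial v)$ by (\ref{ainf-3}). Associativity of $\cdot$ on $A$ is (\ref{ainf-4}) with $\mu_3 = 0$, associativity of $\cdot_1$ follows from (\ref{ainf-1}) together with the mixed associativity relations (\ref{ainf-5})--(\ref{ainf-7}), the $A$-bimodule axioms for $(A_1, \triangleright, \triangleleft)$ are exactly (\ref{ainf-5})--(\ref{ainf-7}), and the crossed-module compatibilities (\ref{crossed1})--(\ref{crossed2}) unwind directly from (\ref{ainf-1})--(\ref{ainf-3}) and these same relations. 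The Nijenhuis data is equally direct: (\ref{infnij-2}) with $\mathcal{N}_2 = 0$ says $N$ is a Nijenhuis operator on $A$, (\ref{infnij-3})--(\ref{infnij-4}) with $\mathcal{N}_2 = 0$ are precisely the two Nijenhuis-bimodule conditions for $((A_1, \triangleright, \triangleleft), N_1)$, and (\ref{infnij-1}) together with (\ref{ainf-1})--(\ref{ainf-2}) makes $\varphi = \partial$ a homomorphism of Nijenhuis algebras.

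The one genuinely nontrivial step is showing that $N_1 = \mathcal{N}_1$ is a Nijenhuis operator for the recovered multiplication $\cdot_1$, i.e. that $((A_1, \cdot_1), N_1)$ is itself a Nijenhuis algebra as the crossed-module data demands. This does not follow from a single axiom but is forced by combining the intertwining (\ref{infnij-1}) with the bimodule condition (\ref{infnij-3}): writing $N_1(u) \cdot_1 N_1(v) = \mu_2(\partial N_1(u), N_1(v)) = \mu_2(\mathcal{N}_0 \partial u, \mathcal{N}_1 v) = \mathcal{N}_0(\partial u) \triangleright \mathcal{N}_1(v)$ and then applying (\ref{infnij-3}) with $a = \partial u$, the three terms on its right-hand side collapse, via $\mathcal{N}_0(\partial u)\triangleright v = N_1(u)\cdot_1 v$, $\partial u \triangleright \mathcal{N}_1(v) = u \cdot_1 N_1(v)$, and $\partial u \triangleright v = u \cdot_1 v$, into exactly the Nijenhuis identity $N_1(u)\cdot_1 N_1(v) = N_1\big( N_1(u)\cdot_1 v + u\cdot_1 N_1(v) - N_1(u\cdot_1 v)\big)$. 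Thus the Nijenhuis operator on $A_1$ required by the crossed-module axioms is automatically produced. I expect this ``derived Nijenhuis operator on the top term'' to be the main obstacle, since it is the only place where the operator structure on $A_1$ is not read off verbatim but must be manufactured from the bimodule compatibilities.

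Conversely, \emph{from a crossed module to a strict $2$-term homotopy Nijenhuis algebra}, given $\big( ((A, \cdot), N), ((A_1, \cdot_1), N_1), \varphi, \triangleright, \triangleleft\big)$ I would set $\mathcal{A}_0 = A$, $\mathcal{A}_1 = A_1$, $\partial = \varphi$, $\mu_3 = 0$, $\mathcal{N}_0 = N$, $\mathcal{N}_1 = N_1$, $\mathcal{N}_2 = 0$, and define $\mu_2(a,b) = a\cdot b$, $\mu_2(a,u) = a\triangleright u$, $\mu_2(u,a) = u\triangleleft a$. Then (\ref{ainf-1})--(\ref{ainf-3}) are the three relations of (\ref{crossed2}), the last using the Peiffer-type identity $\varphi(u)\triangleright v = u\cdot_1 v = u\triangleleft\varphi(v)$; (\ref{ainf-4})--(\ref{ainf-7}) are associativity of $\cdot$ and the $A$-bimodule axioms; and (\ref{ainf-8}) is vacuous. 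Likewise (\ref{infnij-1}) is the intertwining $\varphi N_1 = N\varphi$, (\ref{infnij-2}) is the Nijenhuis condition on $A$, (\ref{infnij-3})--(\ref{infnij-4}) are the Nijenhuis-bimodule conditions, and (\ref{infnij-5}) reduces to $0 = 0$ since $\mathcal{N}_2 = 0$ and $\mu_3 = 0$. Finally I would verify the two assignments are mutually inverse; the only point needing comment is that the product recovered on the top term, $\mu_2(\partial u, v) = \varphi(u)\triangleright v$, coincides with the original $\cdot_1$ precisely by the Peiffer identity of (\ref{crossed2}), so the multiplication on $A_1$ — redundant as part of the $A_\infty$-data but essential to the crossed-module datum — is faithfully reconstructed. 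This establishes the bijection.
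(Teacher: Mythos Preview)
Your proposal is correct and follows essentially the same approach as the paper's own proof: both directions use the same dictionary ($\partial \leftrightarrow \varphi$, $\mu_2 \leftrightarrow (\cdot, \triangleright, \triangleleft)$, $u \cdot_1 v := \mu_2(\partial u, v)$, $\mathcal{N}_i \leftrightarrow N, N_1$), and the verifications are matched to the same axioms. You are in fact more explicit than the paper on two points---the derivation of the Nijenhuis identity for $N_1$ on $(A_1, \cdot_1)$ via (\ref{infnij-1}) and (\ref{infnij-3}), and the use of the Peiffer identity in (\ref{crossed2}) to recover $\cdot_1$ when checking the constructions are mutually inverse---where the paper simply asserts these follow.
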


\begin{proof}
    Let $(  ( \mathcal{A}_1 \xrightarrow{ \partial} \mathcal{A}_0, \mu_2, \mu_3 = 0), (\mathcal{N}_0, \mathcal{N}_1, \mathcal{N}_2 = 0))$ be a strict $2$-term homotopy Nijenhuis algebra. Then it follows from (\ref{ainf-4}) and (\ref{infnij-2}) that $((\mathcal{A}_0, \mu_2), \mathcal{N}_0)$ is a Nijenhuis algebra. We define a bilinear map $\cdot_1 : \mathcal{A}_1 \times \mathcal{A}_1 \rightarrow \mathcal{A}_1$ by $u \cdot_1 v := \mu_2 (\partial u, v) = \mu_2 (u, \partial v)$, for $u, v \in \mathcal{A}_1$. Then it turns out that $(\mathcal{A}_1, \cdot_1)$ is an associative algebra (can be deduced either from  (\ref{ainf-5}), (\ref{ainf-6}), (\ref{ainf-7})) and $\mathcal{N}_1 : \mathcal{A}_1 \rightarrow \mathcal{A}_1$ is a Nijenhuis operator on it (follows either from (\ref{infnij-3}), (\ref{infnij-4})). Hence $((\mathcal{A}_1, \cdot_1), \mathcal{N}_1)$ is also a Nijenhuis algebra. Further, from (\ref{ainf-1}) (or (\ref{ainf-2})) and (\ref{infnij-1}), we get that $\partial : \mathcal{A}_1 \rightarrow \mathcal{A}_0$ is a homomorphism of Nijenhuis algebras. Next, we define maps $\triangleright : \mathcal{A}_0 \times \mathcal{A}_1 \rightarrow \mathcal{A}_1$ and $\triangleleft : \mathcal{A}_1 \times \mathcal{A}_0 \rightarrow \mathcal{A}_1$ by $a \triangleright u : = \mu_2 (a, u)$ and $u \triangleleft a := \mu_2 (u, a)$, for $a \in \mathcal{A}_0$ and $u \in \mathcal{A}_1$. Then from (\ref{ainf-5})-(\ref{ainf-7}) and (\ref{infnij-3})-(\ref{infnij-4}) that $((\mathcal{A}_1, \triangleright, \triangleleft), \mathcal{N}_1)$ is a Nijenhuis bimodule over the Nijenhuis algebra $((\mathcal{A}_0, \mu_2 ), \mathcal{N}_0 )$. Moreover, it is easy to see that all the identities in (\ref{crossed1}) and (\ref{crossed2}) are held. Hence the tuple $\big( ((\mathcal{A}_0, \mu_2 ), \mathcal{N}_0 ), ((\mathcal{A}_1, \cdot_1), \mathcal{N}_1), \partial, \triangleright, \triangleleft \big)$ is a crossed module of Nijenhuis algebras.
    
    \medskip

    Conversely, let $\big(  ((A, ~ \! \cdot ~ \!), N), ((A_1, ~ \! \cdot_1 ~ \!), N_1), \varphi, \triangleright, \triangleleft  \big)$ be a crossed module of Nijenhuis algebras. Then one can verify that 
    $ ( (A_1 \xrightarrow{\varphi} A, \mu_2, \mu_3 = 0), (N, N_1, \mathcal{N}_2 = 0))$ is a strict $2$-term homotopy Nijenhuis algebra, where
    \begin{align*}
        \mu_2 (a, b) = a \cdot b, \quad \mu_2 (a, u) = a \triangleright u ~~~ \text{ and } ~~~ \mu_2 (u, a) = u \triangleleft a, \text{ for } a \in A, u \in A_1.
    \end{align*}
    Finally, the above two constructions are inverses to each other. This completes the proof.
\end{proof}

\medskip

We will now introduce strict homotopy Nijenhuis operators on arbitrary $A_\infty$-algebras. However, a generic homotopy Nijenhuis operator on an $A_\infty$-algebra is yet to be found. We also consider $NS_\infty$-algebras and show that strict homotopy Nijenhuis operators on $A_\infty$-algebras induce $NS_\infty$-algebras.

\begin{defn}
Let $(\mathcal{A}, \{ \mu_n \}_{n \geq 1})$ be an $A_\infty$-algebra. A {\bf strict homotopy Nijenhuis operator} on $(\mathcal{A}, \{ \mu_n \}_{n \geq 1})$ is a degree $0$ linear map $\mathcal{N} : \mathcal{A} \rightarrow \mathcal{A}$ that satisfies
\begin{align}\label{strict-hn}
&\mu_n \big( \mathcal{N}(a_1), \ldots, \mathcal{N} (a_n) \big) = \mathcal{N} \bigg(   \sum_{i=1}^n \mu_n \big(  \mathcal{N}(a_1), \ldots, a_i, \ldots, \mathcal{N} (a_n)  \big) \\
& \qquad \qquad - \sum_{1 \leq i < j \leq n} \mathcal{N} \big(  \mu_n \big(  \mathcal{N}(a_1), \ldots, a_i, \ldots, a_j, \ldots, \mathcal{N}(a_n) \big)   \big) + \cdots
+ (-1)^{n-1} \mathcal{N}^{n-1} \mu_n (a_1, \ldots, a_n) \bigg), \nonumber
\end{align}
for all $n \geq 1$ and homogeneous elements $a_1, \ldots, a_n \in \mathcal{A}$.
\end{defn}

An $A_\infty$-algebra whose underlying graded vector space is concentrated in arity $0$ is nothing but an associative algebra. In such a case, a strict homotopy Nijenhuis operator simply becomes a Nijenhuis operator on the associative algebra. Moreover, for any arbitrary $A_\infty$-algebra $(\mathcal{A} , \{ \mu_n \}_{n \geq 1})$, the identity map $\mathrm{Id}_\mathcal{A} : \mathcal{A} \rightarrow \mathcal{A}$ is a strict homotopy Nijenhuis operator on $(\mathcal{A} , \{ \mu_n \}_{n \geq 1})$.

\medskip

In \cite{das-dend} the author has introduced the notion of a strict homotopy relative Rota-Baxter operator on an $A_\infty$-algebra with respect to a representation. Let $(\mathcal{A}, \{ \mu_n \}_{n \geq 1})$ be an $A_\infty$-algebra. Recall that a {\em representation} of this $A_\infty$-algebra is given by a pair $(\mathcal{M}, \{ \nu_n \}_{n \geq 1})$ of a graded vector space $\mathcal{M} = \oplus_{i \in \mathbb{Z}} \mathcal{M}_i$ with a collection of graded linear maps 
\begin{align*}
\{ \nu_n : \oplus_{r=1}^n (\mathcal{A}^{\otimes r-1} \otimes \mathcal{M} \otimes \mathcal{A}^{n-r}) \rightarrow \mathcal{M} \}_{n \geq 1} ~\text{ with }~ \mathrm{deg} (\nu_n) = n-2 ~~~ (\text{for } n \geq 1 )
\end{align*}
that satisfy the identities in (\ref{ainf-iden}) when exactly one of $a_1, \ldots, a_k$ is taken from $\mathcal{M}$ and the corresponding graded linear operation $\mu_m$ or $\mu_n$ is replaced by $\nu_m$ or $\nu_n$. 
%Any $A_\infty$-algebra $(\mathcal{A}, \{ \mu_n \}_{n \geq 1})$ can be regarded as a representation of itself, where $\nu_n = \mu_n$, for all $n$.
In this case, the direct sum $\mathcal{A} \oplus \mathcal{M}$ can be given an $A_\infty$-algebra structure $(\mathcal{A} \oplus \mathcal{M}, \{ \mu_n^\ltimes \}_{n \geq 1})$, where
\begin{align*}
\mu_n^\ltimes \big( (a_1, u_1), \ldots,  (a_n, u_n) \big) := \big(  \mu_n (a_1, \ldots, a_n) ~ \! , ~ \! \sum_{r=1}^n \nu_n (a_1, \ldots, u_r, \ldots, a_n)  \big),
\end{align*}
for homogeneous $(a_1, u_1), \ldots, (a_n, u_n) \in \mathcal{A} \oplus \mathcal{M}$. This is called the semidirect product.

\medskip

Let $(\mathcal{A}, \{ \mu_n \}_{n \geq 1})$ be an $A_\infty$-algebra and $(\mathcal{M} , \{ \nu_n \}_{n \geq 1})$ be a representation of it. A degree $0$ linear map $\mathcal{R} : \mathcal{M} \rightarrow \mathcal{A}$ is called a {\em strict homotopy relative Rota-Baxter operator} \cite{das-dend} if for all $n \geq 1$ and homogeneous elements $u_1, \ldots, u_n \in \mathcal{M}$,
\begin{align*}
\mu_n \big( \mathcal{R} (u_1), \ldots, \mathcal{R} (u_n) \big) = \sum_{r=1}^n \mathcal{R} \big( \nu_n \big(  \mathcal{R} (u_1), \ldots, u_r, \ldots, \mathcal{R} (u_n)   \big)    \big).
\end{align*}

The next result shows that a strict homotopy relative Rota-Baxter operator lifts to a strict homotopy Nijenhuis operator on the semidirect product. This generalizes Example \ref{exam-rrb} (ii) in the homotopy context.

\begin{prop}
With the above notations, a degree $0$ linear map $\mathcal{R} : \mathcal{M} \rightarrow \mathcal{A}$ is a strict homotopy relative Rota-Baxter operator if and only if the map $\widetilde{\mathcal{R}} : \mathcal{A} \oplus \mathcal{M} \rightarrow \mathcal{A} \oplus \mathcal{M}$ given by $\widetilde{\mathcal{R}} (a, u) := ( \mathcal{R} (u), 0)$ is a strict homotopy Nijenhuis operator on the semidirect product $A_\infty$-algebra $(\mathcal{A} \oplus \mathcal{M}, \{ \mu_n^\ltimes \}_{n \geq 1})$.
\end{prop}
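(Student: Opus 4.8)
The plan is to substitute $\mathcal{N} = \widetilde{\mathcal{R}}$ directly into the defining identity (\ref{strict-hn}) and to exploit the single most useful feature of the lift, namely that $\widetilde{\mathcal{R}}$ is nilpotent of order two. Since $\widetilde{\mathcal{R}}(a,u) = (\mathcal{R}(u),0)$ reads off only the $\mathcal{M}$-component and returns an element with vanishing $\mathcal{M}$-component, we have $\widetilde{\mathcal{R}}^2 = 0$. First I would record this, together with the fact that $\widetilde{\mathcal{R}}$ is a degree $0$ map, so that no Koszul signs are produced when it is moved across homogeneous elements; consequently the weight $(-1)^{|S|-1}$ attached to each summand of (\ref{strict-hn}) is the only sign to track.

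Next I would evaluate (\ref{strict-hn}) for $\mathcal{N} = \widetilde{\mathcal{R}}$ on arbitrary homogeneous inputs $(a_1,u_1),\dots,(a_n,u_n) \in \mathcal{A}\oplus\mathcal{M}$. Rewriting the right-hand side as a sum over nonempty subsets $S \subseteq \{1,\dots,n\}$ of terms of the shape $(-1)^{|S|-1}\,\widetilde{\mathcal{R}}^{|S|}\,\mu_n^\ltimes(\cdots)$ --- with the slots indexed by $S$ left unbarred and the complementary slots carrying a $\widetilde{\mathcal{R}}$ --- the nilpotency $\widetilde{\mathcal{R}}^2 = 0$ immediately annihilates every term with $|S|\geq 2$. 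Hence only the $S=\emptyset$ contribution, which is the left-hand side $\mu_n^\ltimes(\widetilde{\mathcal{R}}(a_1,u_1),\dots,\widetilde{\mathcal{R}}(a_n,u_n))$, and the singleton contributions $\widetilde{\mathcal{R}}\big(\sum_i \mu_n^\ltimes(\dots,(a_i,u_i),\dots)\big)$ survive.

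The third step is the component bookkeeping. In every surviving term all but at most one of the arguments of $\mu_n^\ltimes$ has the form $(\mathcal{R}(u_j),0)$, i.e. has vanishing $\mathcal{M}$-component. Since the $\mathcal{M}$-component of $\mu_n^\ltimes$ is $\sum_r \nu_n(\cdots u_r \cdots)$ and $\nu_n$ kills a zero $\mathcal{M}$-slot, the left-hand side has vanishing $\mathcal{M}$-component and $\mathcal{A}$-component $\mu_n(\mathcal{R}(u_1),\dots,\mathcal{R}(u_n))$, while in the $i$-th singleton term the only surviving representation summand is the one placing $u_i$ in slot $i$, giving $\mathcal{M}$-component $\nu_n(\mathcal{R}(u_1),\dots,u_i,\dots,\mathcal{R}(u_n))$. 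Applying the outer $\widetilde{\mathcal{R}}$ discards the $\mathcal{A}$-component, so the variables $a_i$ drop out entirely, and returns $(\mathcal{R}(\nu_n(\mathcal{R}(u_1),\dots,u_i,\dots,\mathcal{R}(u_n))),0)$. Thus (\ref{strict-hn}) for $\widetilde{\mathcal{R}}$ collapses to $\mu_n(\mathcal{R}(u_1),\dots,\mathcal{R}(u_n)) = \sum_{i=1}^n \mathcal{R}(\nu_n(\mathcal{R}(u_1),\dots,u_i,\dots,\mathcal{R}(u_n)))$ for all $u_1,\dots,u_n\in\mathcal{M}$, which is precisely the strict homotopy relative Rota-Baxter identity. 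Finally, since the computed identity is independent of the $a_i$, its validity for all inputs of $\mathcal{A}\oplus\mathcal{M}$ is equivalent to its validity for all $u_i$, establishing both implications at once.

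The step I expect to be most delicate is the third one: although conceptually transparent, one must argue carefully that in a term where slot $i$ is the unique unbarred argument, exactly one summand of the $\mathcal{M}$-component of $\mu_n^\ltimes$ survives (the one inserting $u_i$ into the $i$-th position), and that the degree-$0$ map $\widetilde{\mathcal{R}}$ then projects onto this term without producing extra signs. Once the $\widetilde{\mathcal{R}}^2 = 0$ truncation is in place, however, what remains is a finite, sign-free verification rather than a genuine difficulty.
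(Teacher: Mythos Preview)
Your argument is correct. The key observation that $\widetilde{\mathcal{R}}^2 = 0$ wipes out every term on the right-hand side of (\ref{strict-hn}) except the singleton sum is exactly what makes the computation finite, and your component bookkeeping in the third step is accurate: in the $i$-th singleton term the $\mathcal{M}$-component of $\mu_n^\ltimes$ reduces to $\nu_n(\mathcal{R}(u_1),\dots,u_i,\dots,\mathcal{R}(u_n))$ because all other slots carry a zero $\mathcal{M}$-entry, and the outer $\widetilde{\mathcal{R}}$ then discards the $\mathcal{A}$-component. The resulting identity is independent of the $a_i$, so the equivalence holds in both directions as you note.

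The paper states this proposition without proof, treating it as a routine homotopy analogue of Example \ref{exam-rrb} (ii). Your proof is precisely the argument one would expect the author to have in mind: the classical case already rests on $\widetilde{R}^2 = 0$, and the passage to $A_\infty$-algebras introduces no new obstruction because the higher-order terms $\mathcal{N}^{|S|}$ with $|S|\ge 2$ all vanish for the same reason. There is nothing to compare against, but nothing is missing from your write-up either.
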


In the following, we introduce $NS_\infty$-algebras as the strongly homotopy analogue of NS-algebras. We observe that $NS_\infty$-algebras are splitting of $A_\infty$-algebras and they can be obtained from strict homotopy Nijenhuis operators as defined above.

\begin{defn}
An {\bf $NS_\infty$-algebra} (also called a {\em strongly homotopy NS-algebra}) is a pair $(\mathcal{A}, \{ \eta_n \}_{n \geq 1})$ of a graded vector space $\mathcal{A} = \oplus_{i \in \mathbb{Z}} \mathcal{A}_i$ with a collection
\begin{align*}
\{ \eta_1 : {\bf k}[C_1] \otimes \mathcal{A} \rightarrow \mathcal{A} ~ \!, ~ \! \eta_{n \geq 2} : {\bf k}[C_{n+1}] \otimes \mathcal{A}^{\otimes n} \rightarrow \mathcal{A} \}
\end{align*}
of graded linear maps with $\mathrm{deg} (\eta_n) = n-2$ (for $n \geq 1$) subject to satisfy the following set of identities:

For each $k \in \mathbb{N}$, $[r] \in C_{k+1}$ and homogeneous elements $a_1, \ldots, a_k \in \mathcal{A}$,
\begin{align}\label{ns-inf}
\sum_{m+n = k+1} \sum_{i=1}^m (-1)^{i(n+1)} ~ \! (\eta_m \circ_i \eta_n) ([r]; a_1, \ldots, a_k) = 0,
\end{align}
where the partial compositions $\eta_m \circ_i \eta_n$ are the graded version of the operations defined in (\ref{ns-circ}).
\end{defn}

An $NS_\infty$-algebra whose underlying graded vector space is concentrated in arity $0$ is nothing but an NS-algebra. On the other hand, an $NS_\infty$-algebra $(\mathcal{A}, \{ \eta_n \}_{n \geq 1})$ for which $\eta_n = 0$, for $n \neq 2$, is nothing but a graded NS-algebra (i.e. an NS-algebra in the category of graded vector spaces). In \cite{das-dend} the present author has considered the explicit description of a $Dend_\infty$-algebra (strongly homotopy dendriform algebra). It turns out that an $NS_\infty$-algebra $(\mathcal{A}, \{ \eta_n \}_{n \geq 1})$ for which
\begin{align*}
\eta_n ([n+1]; a_1, \ldots, a_n) = 0, \text{ for all } n \geq 2 \text{ and } a_1, \ldots, a_n \in \mathcal{A}
\end{align*}
is nothing but a $Dend_\infty$-algebra. Thus, an $NS_\infty$-algebra generalizes $Dend_\infty$-algebras.

\begin{prop}\label{ns-ainf}
Let $(\mathcal{A}, \{ \eta_n \}_{n \geq 1})$ be an $NS_\infty$-algebra. Then $(\mathcal{A}, \{ \overline{\eta}_n \}_{n \geq 1})$ is an $A_\infty$-algebra, where
\begin{align*}
\overline{\eta}_1 (a) := \eta_1 ([1]; a) \quad \text{ and } \quad \overline{\eta}_n (a_1, \ldots, a_n) := \eta_n ( {\scriptstyle [1] + \cdots + [n+1]} ; a_1, \ldots, a_{n}), \text{ for } n \geq 2.
\end{align*}
\end{prop}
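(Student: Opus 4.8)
The plan is to exhibit the totalization $f \mapsto \overline{f}$ as a morphism of graded nonsymmetric operads from $\{\mathcal{O}_{\mathcal{A}}(n)\}_{n\geq 1}$, equipped with the graded analogue of the partial compositions (\ref{ns-circ}), to the graded endomorphism operad $\{\mathrm{Hom}(\mathcal{A}^{\otimes n}, \mathcal{A})\}_{n \geq 1}$, and then simply to push the defining identity (\ref{ns-inf}) forward along this morphism. Concretely, for $f \in \mathcal{O}_{\mathcal{A}}(m)$ I set $\overline{f}(a_1,\ldots,a_m) := f({\scriptstyle [1] + \cdots + [m+1]}; a_1,\ldots,a_m)$, so that $\overline{\eta}_n$ is precisely the totalization of $\eta_n$; this manifestly preserves multidegree, whence $\mathrm{deg}(\overline{\eta}_n) = \mathrm{deg}(\eta_n) = n-2$ as required.

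The heart of the argument is the following \emph{key lemma}: for all $f \in \mathcal{O}_{\mathcal{A}}(m)$, $g \in \mathcal{O}_{\mathcal{A}}(n)$ and $1 \leq i \leq m$,
\[
\overline{f \circ_i g}\,(a_1,\ldots,a_{m+n-1}) = \overline{f}\big(a_1,\ldots,a_{i-1}, \overline{g}(a_i,\ldots,a_{i+n-1}), a_{i+n},\ldots,a_{m+n-1}\big),
\]
that is, totalizing the operadic partial composition recovers the ordinary insertion of the totalized operations. I would prove this by summing the defining formula (\ref{ns-circ}) over all symbols $[r] \in C_{m+n}$ and regrouping. The three branches in which $g$ appears totalized (namely $1 \leq r \leq i-1$, then $i+n \leq r \leq m+n-1$, and the second summand of the $r = m+n$ branch) together produce the term $f([p]; \ldots, \overline{g}(\ldots), \ldots)$ for \emph{every} symbol $[p]$ of $f$ except $p=i$; meanwhile the remaining branch $i \leq r \leq i+n-1$ contributes $f([i];\ldots, g([s];\ldots),\ldots)$ for $s=1,\ldots,n$, and the first summand of the $r=m+n$ branch supplies exactly the missing $s=n+1$ term. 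These last contributions combine into $f([i]; \ldots, \overline{g}(\ldots), \ldots)$, filling the gap at $p=i$, so the grand total is $\sum_{p=1}^{m+1} f([p];\ldots,\overline{g}(\ldots),\ldots) = \overline{f}(\ldots, \overline{g}(\ldots),\ldots)$. The Koszul sign attached to inserting $g$ into the $i$-th slot depends only on $\mathrm{deg}(g)$ and $|a_1|,\ldots,|a_{i-1}|$, hence is common to all symbol-terms and factors through the sum, so the graded statement follows verbatim.

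Granting the lemma, the proof concludes in one stroke. Since the $NS_\infty$ identity (\ref{ns-inf}) holds for every $[r] \in C_{k+1}$, it holds after summing over all symbols, giving
\[
\sum_{m+n=k+1} \sum_{i=1}^m (-1)^{i(n+1)} \overline{\eta_m \circ_i \eta_n} = 0 .
\]
Applying the key lemma term by term turns each $\overline{\eta_m \circ_i \eta_n}$ into the insertion of $\overline{\eta}_n$ into the $i$-th argument of $\overline{\eta}_m$, and the sign $(-1)^{i(n+1)}$ together with the factored Koszul sign $(-1)^{n(|a_1|+\cdots+|a_{i-1}|)}$ reproduces exactly the coefficient in the $A_\infty$ identity (\ref{ainf-iden}). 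Hence $\{\overline{\eta}_n\}_{n\geq 1}$ satisfies (\ref{ainf-iden}), and $(\mathcal{A}, \{\overline{\eta}_n\}_{n\geq 1})$ is an $A_\infty$-algebra.

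The step I expect to be the main obstacle is the bookkeeping inside the key lemma: verifying that the isolated $[n+1]$-symbol term in the $r=m+n$ branch of (\ref{ns-circ}) is precisely what completes the partial sum $\sum_{s=1}^{n}$ coming from the middle branch into the full totalization $\overline{g}$, thereby restoring the otherwise-missing $p=i$ contribution to $\overline{f}$. Once this matching is pinned down, both the sign analysis and the final totalization are routine.
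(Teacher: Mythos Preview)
Your proposal is correct and follows essentially the same approach as the paper: sum the $NS_\infty$ identity (\ref{ns-inf}) over all labels $[r] \in C_{k+1}$ and expand the partial compositions via (\ref{ns-circ}) to recover (\ref{ainf-iden}). The only difference is packaging: you isolate the expansion as a standalone ``key lemma'' asserting that totalization $f \mapsto \overline{f}$ is a morphism of nonsymmetric operads, whereas the paper performs the same summation-and-regrouping inline without naming it; the underlying bookkeeping (in particular the role of the $r=m+n$ branch in completing both the missing $p=i$ and the missing $s=n+1$ terms) is identical.
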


\begin{proof}
 Since $(\mathcal{A}, \{ \eta_n \}_{n \geq 1})$ is an $NS_\infty$-algebra, the identities in (\ref{ns-inf}) are hold. Hence for any $k \in \mathbb{N}$ and homogeneous elements $a_1, \ldots, a_k \in \mathcal{A}$, we have
 \begin{align*}
\sum_{m+n = k+1} \sum_{i=1}^m (-1)^{i(n+1)} \sum_{ r=1}^{ k+1} ~ \! (\eta_m \circ_i \eta_n) ([r]; a_1, \ldots, a_k) = 0.
\end{align*}
By expanding the partial compositions (which are the graded version of the operations (\ref{ns-circ})), we get that
\begin{align*}
\sum_{m+n = k+1} &\sum_{i=1}^m (-1)^{i (n+1) + n (|a_1| + \cdots + |a_{i-1}|)} \\ & \qquad  ~ \! \eta_m ( {\scriptstyle [1] + \cdots + [m+1]} ; a_1, \ldots, a_{i-1}, \eta_n ( {\scriptstyle [1] + \cdots + [n+1]} ; a_i , \ldots, a_{i+n -1}), a_{i+n}, \ldots, a_k   ) = 0.
\end{align*}
This shows the desired result.
\end{proof}

\begin{thm}
Let $(\mathcal{A}, \{ \mu_n \}_{n \geq 1})$ be an $A_\infty$-algebra and $\mathcal{N} : \mathcal{A} \rightarrow \mathcal{A}$ be a strict homotopy Nijenhuis operator on it. Then the pair $(A, \{ \eta_n \}_{n \geq 1})$ is an $NS_\infty$-algebra, where
\begin{align*}
\eta_n ([r]; a_1, \ldots, a_n) = \begin{cases} \medskip \medskip
\mu_n \big( \mathcal{N} (a_1), \ldots, a_r, \ldots, \mathcal{N} (a_n) \big) & \text{ for } r=1,\ldots, n, \\ \medskip
- \sum_{1 \leq i < j \leq n} \mathcal{N} \big( \mu_n \big( \mathcal{N} (a_1), \ldots, a_i, \ldots,a_j , \ldots, \mathcal{N} (a_n) \big)  \big) & \text{ for } r= n+1.\\
\qquad + \cdots + (-1)^{n-1} \mathcal{N}^{n-1} \big(  \mu_n (a_1, \ldots, a_n)  \big)
\end{cases}
\end{align*}
As a consequence, $(\mathcal{A}, \{ \mu_{n, \mathcal{N}} \}_{n \geq 1} )$ is an $A_\infty$-algebra (called the deformed $A_\infty$-algebra), where $\mu_{1, \mathcal{N}} := \mu_1$ and for $n \geq 2$,
\begin{align*}
\mu_{n, \mathcal{N}}  &(a_1, \ldots, a_n) := \sum_{r=1}^n \mu_n \big( \mathcal{N} (a_1), \ldots, a_r, \ldots, \mathcal{N} (a_n) \big)   \\ & ~~~  - \sum_{1 \leq i < j \leq n} \mathcal{N} \big( \mu_n \big( \mathcal{N} (a_1), \ldots, a_i, \ldots,a_j , \ldots, \mathcal{N} (a_n) \big)  \big)  + \cdots + (-1)^{n-1} \mathcal{N}^{n-1} \big(  \mu_n (a_1, \ldots, a_n)  \big).
\end{align*}
\end{thm}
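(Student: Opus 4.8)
The plan is to prove the theorem in two stages: first verify directly that the operations $\{\eta_n\}_{n\ge 1}$ satisfy the defining identities (\ref{ns-inf}) of an $NS_\infty$-algebra, and then obtain the ``consequence'' statement for free. Indeed, unwinding the definition of $\eta_n$ shows that its total $\overline{\eta}_n := \sum_{r=1}^{n+1}\eta_n([r];-)$ is exactly the proposed deformed operation $\mu_{n,\mathcal{N}}$; hence once $(\mathcal{A},\{\eta_n\})$ is known to be an $NS_\infty$-algebra, Proposition \ref{ns-ainf} immediately yields that $(\mathcal{A},\{\mu_{n,\mathcal{N}}\})$ is an $A_\infty$-algebra. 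So the whole theorem reduces to establishing (\ref{ns-inf}). The key simplifying observation I would record at the outset is that the strict homotopy Nijenhuis condition (\ref{strict-hn}) can be rewritten in the compact form
\[
\mu_n\big(\mathcal{N}(a_1),\ldots,\mathcal{N}(a_n)\big)=\mathcal{N}\big(\overline{\eta}_n(a_1,\ldots,a_n)\big),
\]
which is immediate once one recognizes that the bracketed right-hand side of (\ref{strict-hn}) is precisely $\sum_{r=1}^{n}\eta_n([r];-)+\eta_n([n+1];-)=\overline{\eta}_n$. This says that $\mathcal{N}$ ``absorbs'' a fully $\mathcal{N}$-decorated operation $\mu_n(\mathcal{N}\otimes\cdots\otimes\mathcal{N})$ into the total $\overline{\eta}_n$, and it is exactly the ingredient needed to close the computation.

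To prove (\ref{ns-inf}) I would fix $k$ and a distinguished position $[r]\in C_{k+1}$, and expand each partial composition $\eta_m\circ_i\eta_n$ using the cases of (\ref{ns-circ}) in their graded form. For $r\le k$ the only contributions are those in which the single ``undecorated'' slot sits at input $a_r$, so that every term is an iterated $\mu$-composition in which all other inputs carry an $\mathcal{N}$; the required cancellation is then precisely the $A_\infty$-identity (\ref{ainf-iden}) for $\{\mu_n\}$ evaluated on the decorated tuple $(\mathcal{N}(a_1),\ldots,a_r,\ldots,\mathcal{N}(a_k))$. Whenever a whole inner operation $\eta_n$ enters through its ``total'' column $[1]+\cdots+[n+1]$, I would apply the reformulated identity above to replace $\mathcal{N}(\overline{\eta}_n(\cdots))$ by the fully decorated $\mu_n(\mathcal{N}\cdots\mathcal{N})$, which is what makes the decorated $A_\infty$-identity applicable. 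The remaining position $r=k+1$ collects all the higher correction terms carrying two or more copies of $\mathcal{N}$; here one again invokes (\ref{ainf-iden}), but now must feed in (\ref{strict-hn}) repeatedly to reconcile the nested powers $\mathcal{N},\mathcal{N}^2,\ldots$ produced on the two sides.

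The main obstacle is organizational rather than conceptual: it is the simultaneous bookkeeping of the $NS_\infty$ signs $(-1)^{i(n+1)}$ together with the Koszul signs arising from moving $\mathcal{N}$ (degree $0$) and the $\mu_n$ (degree $n-2$) past homogeneous inputs, and the matching of the correction terms at the column $r=k+1$, where the nested powers of $\mathcal{N}$ must be tracked and repeatedly absorbed via (\ref{strict-hn}). A cleaner way to organize and check the argument — which I would use as a guide — is to note that an $NS_\infty$-structure is the same as a Maurer--Cartan element $\eta=\sum_n\eta_n$ of the (graded version of the) Lie algebra $\big(\mathcal{O}_\mathcal{A}(\bullet+1),\llbracket\ ,\ \rrbracket\big)$, since (\ref{ns-inf}) is literally the expansion of $\llbracket\eta,\eta\rrbracket=0$. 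From this viewpoint the theorem asserts that the assignment $\mu\mapsto\eta$ determined by $\mathcal{N}$ carries the Maurer--Cartan element $\mu=\sum_n\mu_n$ encoding the $A_\infty$-structure to a Maurer--Cartan element, and the reformulated condition (\ref{strict-hn}) is exactly the compatibility that guarantees this. The deformed $A_\infty$-statement then follows formally from Proposition \ref{ns-ainf}, since $\overline{\eta}_n=\mu_{n,\mathcal{N}}$.
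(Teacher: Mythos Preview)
Your proposal is correct and follows essentially the same approach as the paper: the paper also records the key reformulation $\mu_n(\mathcal{N}(a_1),\ldots,\mathcal{N}(a_n))=\mathcal{N}\big(\eta_n([1]+\cdots+[n+1];a_1,\ldots,a_n)\big)$, then verifies (\ref{ns-inf}) for $[r]\in\{[1],\ldots,[k]\}$ by evaluating the $A_\infty$-identity on the decorated tuple $(\mathcal{N}(a_1),\ldots,a_r,\ldots,\mathcal{N}(a_k))$ and matching the three cases of the partial composition, while for $[r]=[k+1]$ it appeals (as you do) to a ``straightforward but tedious'' cancellation using (\ref{strict-hn}); the deformed $A_\infty$-statement is then deduced from Proposition~\ref{ns-ainf}. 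Your additional Maurer--Cartan reading is a helpful heuristic but not a different proof.
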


\begin{proof}
Since $\mathcal{N} : A \rightarrow \mathcal{A}$ is a strict homotopy Nijenhuis operator, it follows from (\ref{strict-hn}) that
\begin{align*}
\mu_n \big(  \mathcal{N} (a_1), \ldots, \mathcal{N} (a_n) \big) = \mathcal{N} \big(  \eta_n ( {\scriptstyle [1] + \cdots + [n+1]} ; a_1, \ldots, a_n)  \big), \text{ for all } n.
\end{align*}
On the other hand, $(\mathcal{A}, \{ \mu_n \}_{n \geq 1})$ is an $A_\infty$-algebra implies that 
\begin{align}\label{ainf-new}
\sum_{m+n = k+1} \sum_{i=1}^m (-1)^{i (n+1) + n ( |a_1| + \cdots + |a_{i-1}|)} ~ \! \mu_m (a_1, \ldots, a_{i-1}, \mu_n (a_i, \ldots, a_{i+n -1}), a_{i+n}, \ldots, a_k) = 0,
\end{align} 
for all $k \geq 1$. In the identity (\ref{ainf-new}), replace the tuple $(a_1, \ldots, a_k)$ of homogeneous elements of $\mathcal{A}$ by the tuple $(\mathcal{N} (a_1), \ldots, a_r, \ldots, \mathcal{N} (a_k))$, for some $1 \leq r \leq k$. For any fixed $m, n$ and $i$, if $r \leq i-1$, then we get that
%the term inside the summation becomes
\begin{align*}
 &(-1)^{n ( |a_1| + \cdots + |a_{i-1}|)} ~ \! \mu_m \big(  \mathcal{N} (a_1), \ldots, a_r, \ldots, \mathcal{N} (a_{i-1}) , \mu_n \big( \mathcal{N} (a_i), \ldots, \mathcal{N} (a_{i+n-1}) \big), \ldots, \mathcal{N} (a_k) \big) \\
&=  (-1)^{n ( |a_1| + \cdots + |a_{i-1}|)} ~ \!  \mu_m \big(  \mathcal{N} (a_1), \ldots, a_r, \ldots, \mathcal{N} (a_{i-1}), \mathcal{N} \big(  \eta_n ( {\scriptstyle [1] + \cdots + [n+1]}; a_i, \ldots, a_{i+n-1}) \big), \ldots, \mathcal{N} (a_k)       \big) \\
&=  (-1)^{n ( |a_1| + \cdots + |a_{i-1}|)} ~ \! \eta_m \big(   [r] ; a_1, \ldots, a_{i-1} , \eta_n ( {\scriptstyle [1] + \cdots + [n+1]} ; a_i, \ldots, a_{i+n-1} ), \ldots, a_k \big) \\
&= (\eta_m \circ_i \eta_n) ([r]; a_1, \ldots, a_k).
\end{align*}
On the other hand, if $i \leq r \leq i+ n -1$ then 
\begin{align*}
 &(-1)^{n ( |a_1| + \cdots + |a_{i-1}|)} ~ \! \mu_m \big(  \mathcal{N} (a_1), \ldots, \mathcal{N} (a_{i-1}) , \mu_n \big( \mathcal{N} (a_i), \ldots, a_r, \ldots, \mathcal{N} (a_{i+n-1}) \big), \ldots, \mathcal{N} (a_k) \big) \\
&=  (-1)^{n ( |a_1| + \cdots + |a_{i-1}|)} ~ \!  \mu_m \big(  \mathcal{N} (a_1), \ldots, \mathcal{N} (a_{i-1}),  \eta_n ( [r-i+1]; a_i, \ldots, a_{i+n-1}), \ldots, \mathcal{N} (a_k)       \big) \\
&=  (-1)^{n ( |a_1| + \cdots + |a_{i-1}|)} ~ \! \eta_m \big(   [i] ; a_1, \ldots, a_{i-1} , \eta_n ( [r-i+1] ; a_i, \ldots, a_{i+n-1} ), \ldots, a_k \big) \\
&= (\eta_m \circ_i \eta_n) ([r]; a_1, \ldots, a_k).
\end{align*}
Similarly, if $i+n \leq r \leq k$, we get that
\begin{align*}
&(-1)^{n ( |a_1| + \cdots + |a_{i-1}|)} ~ \! \mu_m \big(  \mathcal{N} (a_1), \ldots, \mathcal{N} (a_{i-1}) , \mu_n \big( \mathcal{N} (a_i), \ldots, \mathcal{N} (a_{i+n-1}) \big), \ldots, a_r, \ldots, \mathcal{N} (a_k) \big) \\
& = (\eta_m \circ_i \eta_n) ([r]; a_1, \ldots, a_k).
\end{align*}
This implies that the $NS_\infty$-algebra identities (\ref{ns-inf}) are hold for $[r]= [1],\ldots, [k]$. Finally, a straightforward but tedious computation (using the identity (\ref{strict-hn})) shows that
\begin{align*}
\sum_{i=1}^m (-1)^{i (n+1) + n ( |a_1| + \cdots + |a_{i-1}|)} ~ \!  \bigg\{  &\eta_m \big( [i] ; a_1, \ldots, a_{i-1}, \eta_n ( [n+1] ; a_i, \ldots, a_{i+n-1}), \ldots, a_k    \big) \\
+~& \eta_m \big(  [m+1]; a_1, \ldots, a_{i-1} , \eta_n (  {\scriptstyle [1] + \cdots + [n+1]}; a_i, \ldots, a_{i+n-1}), \ldots, a_k   \big)  \bigg\} = 0
\end{align*}
which imply that $\sum_{m+n= k +1} \sum_{i=1}^m (-1)^{i (n+1)} ~ \! (\eta_m \circ_i \eta_n) ( [k+1] ; a_1, \ldots, a_k) = 0$.
This verifies the identity (\ref{ns-inf}) for $[r]= [k+1]$. Hence the proof of the first part follows.

The second part of the statement follows as a consequence of Proposition \ref{ns-ainf}.
\end{proof}

It is important to remark that a strict homotopy Nijenhuis operator on an $A_\infty$-algebra is a generalization of Nijenhuis operators on $n$-ary algebras \cite{liu-sheng}. Motivated by a result from this reference, here we obtain the following.

\begin{thm}
Let $(\mathcal{A}, \{ \mu_n \}_{n \geq 1})$ be an $A_\infty$-algebra and $\mathcal{N} : \mathcal{A} \rightarrow \mathcal{A}$ be a strict homotopy Nijenhuis operator.
\begin{itemize}
\item[(i)] Then for any $k \geq 0$, the map $\mathcal{N}^k : \mathcal{A} \rightarrow \mathcal{A}$ is also a strict homotopy Nijenhuis operator on the $A_\infty$-algebra  $(\mathcal{A}, \{ \mu_n \}_{n \geq 1})$. Hence one obtains the deformed $A_\infty$-algebra $(\mathcal{A}, \{ \mu_{n, \mathcal{N}^k} \}_{n \geq 1})$.
\item[(ii)] Further, for any $k , l \geq 0$, the map $\mathcal{N}^l : \mathcal{A} \rightarrow \mathcal{A}$ is a strict homotopy Nijenhuis operator on the deformed $A_\infty$-algebra $(\mathcal{A}, \{ \mu_{n, \mathcal{N}^k} \}_{n \geq 1})$.
\end{itemize}
\end{thm}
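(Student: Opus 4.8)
The plan is to reduce the theorem, arity by arity, to a single combinatorial identity about one multilinear operation and then bootstrap both parts from it. First I would rewrite the defining identity (\ref{strict-hn}) in the compact form $\mu_n \circ \mathcal{N}^{\otimes n} = \mathcal{N} \circ \mu_{n, \mathcal{N}}$, valid for every $n \geq 1$, where the sum inside $\mathcal{N}$ on the right is precisely the deformed operation $\mu_{n,\mathcal{N}}$. The crucial structural observation is that, for each fixed arity $n$, both this identity and the deformed operation $\mu_{n,\mathcal{N}}$ involve only the single map $\mu_n$ and the degree $0$ operator $\mathcal{N}$; the $A_\infty$-relations (\ref{ainf-iden}) enter neither statement. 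Moreover, since $\mathcal{N}$ has degree $0$, inserting or deleting copies of $\mathcal{N}$ produces no Koszul signs, so the graded computation is formally identical to the ungraded one. Hence the Nijenhuis-propagation content of the theorem splits into per-arity statements about a single $n$-ary operation equipped with a Nijenhuis operator, exactly the situation treated for $n$-ary algebras in \cite{liu-sheng}.

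The technical heart is a single \emph{mixed-power} identity: for each $n$ and all exponents $p_1, \ldots, p_n \geq 0$, the element $\mu_n(\mathcal{N}^{p_1} a_1, \ldots, \mathcal{N}^{p_n} a_n)$ can be rewritten as an explicit signed combination of terms $\mathcal{N}^s \mu_n(\cdots)$ whose arguments carry strictly lower powers of $\mathcal{N}$. I would prove this by induction on the total exponent $p_1 + \cdots + p_n$, the base case being the compact form of (\ref{strict-hn}) and the inductive step repeatedly absorbing a fully-$\mathcal{N}$-hit block into an outer $\mathcal{N}$ via (\ref{strict-hn}). Two consequences then fall out directly. Taking all $p_i = k$ gives $\mu_n \circ (\mathcal{N}^k)^{\otimes n} = \mathcal{N}^k \circ \mu_{n, \mathcal{N}^k}$, which is exactly the assertion that $\mathcal{N}^k$ is a strict homotopy Nijenhuis operator; combined with the preceding theorem this produces the deformed $A_\infty$-algebra $(\mathcal{A}, \{\mu_{n, \mathcal{N}^k}\})$ and settles (i). The same identity yields the \emph{semigroup law} $(\mu_{n, \mathcal{N}^k})_{\mathcal{N}^l} = \mu_{n, \mathcal{N}^{k+l}}$ for all $k, l \geq 0$, together with $\mu_{n, \mathcal{N}^k} \circ (\mathcal{N}^l)^{\otimes n} = \mathcal{N}^l \circ \mu_{n, \mathcal{N}^{k+l}}$; the latter says precisely that $\mathcal{N}^l$ is a strict homotopy Nijenhuis operator on the $A_\infty$-algebra $(\mathcal{A}, \{\mu_{n, \mathcal{N}^k}\})$ obtained in (i), which is (ii).

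A conceptually clean way to organize the bootstrapping is to note that the compact identity says exactly that $\mathcal{N}^k \colon (\mathcal{A}, \{\mu_{n, \mathcal{N}^k}\}) \to (\mathcal{A}, \{\mu_n\})$ is a \emph{strict} $A_\infty$-morphism; such morphisms compose, and the semigroup law identifies the source structure of the composite $\mathcal{N}^k \circ \mathcal{N}^l = \mathcal{N}^{k+l}$ as $\{\mu_{n, \mathcal{N}^{k+l}}\}$, which makes the closure of the Nijenhuis property under taking powers transparent. I expect the main obstacle to be the inductive proof of the mixed-power identity: the bookkeeping of the nested alternating sums over subsets of $\{1, \ldots, n\}$, decorated with varying powers of $\mathcal{N}$, is delicate, and one must check that the signs $(-1)^{j-1}$ and the exponents of $\mathcal{N}$ recombine exactly into the deformation pattern of $\mu_{n, \mathcal{N}^{k+l}}$ after each application of the Nijenhuis relation. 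Once that identity is in hand, everything else is formal.
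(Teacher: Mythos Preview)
Your proposal is correct and, at its core, follows the same strategy as the paper: both arguments work arity by arity, exploit that $\mathcal{N}$ has degree $0$ so no Koszul signs appear, and reduce everything to an induction whose inductive step is ``apply the Nijenhuis relation repeatedly and cancel.'' The paper's proof is in fact just as sketchy as yours on the delicate point: for (i) it inducts on $k$, expands $\mu_n(\mathcal{N}^{k+1}a_1,\ldots,\mathcal{N}^{k+1}a_n)$ using the inductive hypothesis for $\mathcal{N}^k$, and then simply says ``by repeatedly using the fact that $\mathcal{N}$ is a strict homotopy Nijenhuis operator and then cancellation of terms'' one arrives at the required expression; for (ii) it says only that the result ``can be proved by using the mathematical induction on $l$.''

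Where you differ is organizational rather than mathematical. The paper runs two separate inductions (on $k$ for (i), on $l$ for (ii)), each time invoking the single-step Nijenhuis relation inside the step. You instead isolate a single mixed-power identity (induction on the total exponent $p_1+\cdots+p_n$) from which (i), (ii), and the semigroup law $(\mu_{n,\mathcal{N}^k})_{\mathcal{N}^l}=\mu_{n,\mathcal{N}^{k+l}}$ all drop out simultaneously. This is a cleaner packaging: the semigroup law is the homotopy analogue of Proposition~\ref{kth-power} (and of the classical statement that $(A,(\cdot_{N^k})_{N^l})=(A,\cdot_{N^{k+l}})$), yet the paper does not state it explicitly in the $A_\infty$ setting. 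Your strict-$A_\infty$-morphism reformulation $\mathcal{N}^k:(\mathcal{A},\{\mu_{n,\mathcal{N}^k}\})\to(\mathcal{A},\{\mu_n\})$ is also a genuinely helpful conceptual gloss absent from the paper. Neither approach escapes the combinatorial bookkeeping you flag as the main obstacle; both simply assert that the alternating sums over subsets of $\{1,\ldots,n\}$ with varying $\mathcal{N}$-exponents recombine correctly.
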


\begin{proof}
(i) We will prove this result by using the mathematical induction on $k$. First, observe that the statement is true for $k = 0 ,1 $. Assume that the statement is true for some natural number $k$. Choose any homogeneous elements $a_1, \ldots, a_n \in \mathcal{A}$. Since $\mathcal{N}^k$ is a strict homotopy Nijenhuis operator (by assumption), we observe that
\begin{align}
&\mu_n \big(  \mathcal{N}^{k+1} (a_1), \ldots, \mathcal{N}^{k+1} (a_n) \big) \nonumber \\
&= \sum_{i=1}^n \mathcal{N}^k \mu_n \big(    \mathcal{N}^{k+1} (a_1) , \ldots, \mathcal{N} (a_i), \ldots, \mathcal{N}^{k+1} (a_n) \big) \label{nk1} \\
& \quad - \sum_{1 \leq i < j \leq n} \mathcal{N}^{2k} \mu_n \big(   \mathcal{N}^{k+1} (a_1) , \ldots, \mathcal{N} (a_i), \ldots, \mathcal{N} (a_j), \ldots, \mathcal{N}^{k+1} (a_n)    \big) \nonumber \\
& \qquad + \cdots + (-1)^{n-1} \mathcal{N}^{nk}  \mu_n \big(  \mathcal{N} (a_1), \ldots, \mathcal{N} (a_n) \big). \nonumber
\end{align}
By repeatedly using the fact that $\mathcal{N}$ is a strict homotopy Nijenhuis operator and then cancellation of terms, the whole expression in (\ref{nk1}) becomes
\begin{align*}
\sum_{i=1}^n & \mathcal{N}^{k+1} \mu_n \big(    \mathcal{N}^{k+1} (a_1) , \ldots, a_i, \ldots, \mathcal{N}^{k+1} (a_n) \big) \\
& - \sum_{1 \leq i < j \leq n} \mathcal{N}^{2k+2} \mu_n \big(   \mathcal{N}^{k+1} (a_1) , \ldots, a_i, \ldots, a_j, \ldots, \mathcal{N}^{k+1} (a_n) \big) + \cdots + (-1)^{n-1} \mathcal{N}^{nk + n}  \mu_n \big(  a_1, \ldots, a_n \big).
\end{align*}
This shows that $\mathcal{N}^{k+1}$ is a strict homotopy Nijenhuis operator (i.e. the statement is valid for $k+1$). Hence the result follows by the mathematical induction.

(ii) This can be proved by using the mathematical induction on $l$.
\end{proof}

\medskip

\medskip

\noindent {\bf Acknowledgements.} The author thanks the Department of Mathematics, IIT Kharagpur for providing the beautiful academic atmosphere where the research has been carried out.
%\vspace*{1cm}

%\medskip

%\noindent {\bf Funding.} Ramkrishna Mandal would like to thank the Government of India for supporting his work through the Prime Minister Research Fellowship.

\medskip

\noindent {\bf Data Availability Statement.} Data sharing does not apply to this article as no new data were created or analyzed in this study.

\end{document}